\newcommand*\pFq[6][8]{%
  \begingroup 
  \pFqmuskip=#1mu\relax
  \mathcode`\,=\string"8000
  \begingroup\lccode`\~=`\,
  \lowercase{\endgroup\let~}\pFqcomma
  {}_{#2}F_{#3}{\left[\genfrac..{0pt}{}{#4}{#5};#6\right]}%
  \endgroup
}
\newcommand{\pFqcomma}{\mskip\pFqmuskip}
\newcommand*\pPq[6][8]{%
  \begingroup 
  \pPqmuskip=#1mu\relax
  \mathcode`\,=\string"8000
  \begingroup\lccode`\~=`\,
  \lowercase{\endgroup\let~}\pPqcomma
  {}_{#2}\Phi_{#3}{\left[\genfrac..{0pt}{}{#4}{#5};#6\right]}%
  \endgroup
}
\newcommand{\pPqcomma}{\mskip\pPqmuskip}
\newtheorem{theorem}{Theorem}[section]
\newtheorem{lemma}[theorem]{Lemma}
\newtheorem{prop}[theorem]{Proposition}
\newtheorem{cor}[theorem]{Corollary}
\theoremstyle{definition}
\theoremstyle{remark}
\newtheorem{remark}[theorem]{Remark}
\numberwithin{equation}{section}
\newcommand{\be}{\begin{equation}}
\newcommand{\ee}{\end{equation}}
\newcommand{\ba}{\begin{eqnarray}}
\newcommand{\ea}{\end{eqnarray}}
\newcommand{\baa}{\begin{eqnarray*}}
\newcommand{\eaa}{\end{eqnarray*}}
\newcommand{\bea}{\begin{eqnarray*}}
\newcommand{\eea}{\end{eqnarray*}}
\newcommand{\bb}{}
      \def\cF{{\mathcal F}}
\def\cG{{\mathcal G}}      
      \def\cL{{\mathcal L}}
\def\cM{{\mathcal M}}
\title[Recurrence relations of X-LBP]
{Recurrence relations of Exceptional Laurent biorthogonal polynomials}
\author{Yu Luo}
\address{Department of Mathematics\\
Zhejiang University of Technology\\
Hangzhou 310014, China}
\email{luoyu4304@outlook.com}
\author{Satoshi Tsujimoto}
\address{Department of Applied Mathematics and Physics\\
Graduate School of Informatics, Kyoto University\\
Sakyo-Ku, Kyoto, 606 8501, Japan}
\email{tsujimoto.satoshi.5s@kyoto-u.jp}
\author{Hao Yang}
\address{Dongfeng Commercial Vehicle Technical Center of 
Dongfeng Commercial Vehicle Co., Ltd.\\
China}
\email{yang.hao.s83@kyoto-u.jp}
\thanks{Yu Luo\\
luoyu4304@outlook.com\\
+86-18720265091\\
Department of Mathematics, Zhejiang University of Technology, Hangzhou 310014, China\\
Satoshi Tsujimoto\\
tsujimoto.satoshi.5s@kyoto-u.jp\\
+81-75-753-5498\\
Department of Applied Mathematics and Physics, Graduate School of Informatics, Kyoto University, 
Sakyo-Ku, Kyoto, 606 8501, Japan\\
Hao Yang\\
yang.hao.s83@kyoto-u.jp\\
Department of Applied Mathematics and Physics, Graduate School of Informatics, Kyoto University, 
Sakyo-Ku, Kyoto, 606 8501, Japan\\
Dongfeng Commercial Vehicle Technical Center of Dongfeng Commercial Vehicle Co., Ltd., Wuhan, China\\
}
\begin{document}

\begin{abstract}
Exceptional extensions of a class of Laurent biorthogonal polynomials 
(the so-called Hendriksen-van Rossum polynomials) 
have been presented by the authors recently. 
This is achieved through Darboux transformations of generalized eigenvalue problems. 
In this paper, we discuss the recurrence relations satisfied by these exceptional Laurent biorthogonal polynomials 
and provide a type of recurrence relations with $3l_0+4$ terms explicitly, 
where the parameter $l_0$ corresponds to the degree of the polynomial part in the seed function used in the 
Darboux transformation. 
In the proof of these recurrence relations, the backward operator which maps an exceptional polynomial into a 
classical one plays a significant role. 

\smallskip
\noindent \textbf{\keywordsname}
Recurrence relations; 
Laurent biorthogonal polynomials; 
Exceptional Laurent biorthogonal polynomial; 
Hendriksen-van Rossum polynomials. 

\smallskip
\noindent \textbf{2020 Mathematics Subject Classification}
33C45; 33C47; 42C05
\end{abstract}

\maketitle

\section{Introduction}
An important topic that has attracted more and more attention from researchers 
in the field of mathematics and physics 
is called the exceptional-type extensions of classical orthogonal polynomials (COP), 
in which a sequence of polynomials with degree jumps can provide a basis for an appropriate weighted $L^2$-space. 
The exceptional orthogonal polynomials (XOP) generalize COP by loosening restrictions on their degree sequence. 

Over the past decade, significant efforts have been made in the theory and applications of XOP. 
The initial examples were introduced by G\'omez, Kamran, and Milson in \cite{GKM09, GKM10_1}, 
and were quickly recognized for their application potential by physicists and mathematicians
\cite{Quesne08, Quesne09, MR09, OS09, OS11}. 
These XOPs, satisfying second-order differential equations, can be used to derive new solvable potentials. 
The Darboux transformation is crucial in their construction, with many examples obtained using this method 
\cite{GKM10_2, GGM13, X q-Racah, STA}. 
It was later revealed that multiple-step or higher-order Darboux transformations lead to XOP labeled by multi-indices
\cite{GKM12, OS13}. 
Dur\'an also developed a systematic way of constructing XOP using the concept of dual families of polynomials 
\cite{BispOP, Krall_Hahn, XOPviaKrall}. 
Exceptional extensions of COP from the Askey scheme and Askey-Wilson scheme have been extensively studied, 
including the $q\rightarrow -1$ cases \cite{XBI}. 
Important properties such as recurrence relations \cite{GKK16, X-RR, O16}, 
zeros \cite{GMM13, HS12, KM15}
and spectral analysis \cite{LLM16} 
were also discussed. 
In \cite{X-Bochner}, a complete classification of the continuous XOP, 
generalizing classical ones (Jacobi, Laguerre, and Hermite polynomials), was addressed. 
Additionally, \cite{X-Kra} provides a detailed discussion of exceptional Krawtchouk polynomials. 

Recently, the authors of this paper \cite{XHR} introduced exceptional extensions of Hendriksen-van Rossum (HR) 
polynomials, known as exceptional Laurent biorthogonal polynomials (XLBP). 
These XLBPs are constructed through spectral transformations of generalized eigenvalue problems (GEVP), 
leading to Darboux transformations of GEVP. 
This results in four types of exceptional HR polynomials, including state-deletion, state-addition, and iso-spectral cases.

HR polynomials 
(also called Askey biorthogonal polynomials on the unit circle by \cite{ABP}) 
are defined by the following hypergeometric functions \cite{HR}: 
\begin{eqnarray}
\label{HR_P}
&&P_n(z):=P_n(z;\alpha,\beta)=\dfrac{(\beta)_n}{(\alpha+1)_n}
~_2F_1\left(
\begin{gathered}
-n, \alpha+1 \\
1-\beta-n
\end{gathered}
;z
\right),  \\
\label{HR_Q}
&&Q_n(z):=Q_n(z;\alpha,\beta)=P_n(z,\beta,\alpha), 
\end{eqnarray}
where $\alpha$ and $\beta$ are real parameters, 
$\{Q_n(z)\}_n$ are the biorthogonal partners of $\{P_n(z)\}_n$. 
Unless otherwise specified, throughout this paper, the symbol $P_n(z)$ ($Q_n(z)$) stands for 
$P_n(z;\alpha,\beta)$ ($Q_n(z;\alpha,\beta)$). 

The exceptional HR polynomials obtained from a single-step Darboux transformation \cite{XHR} are expressed as 
\begin{align}
\label{hphi_to_X1P}
P^{(j_0,l_0,n)}(z)=
\begin{cases}
\hat{\psi}^{(j_0,l_0,n)}(z), & j_0=1,2, \\
z^{l_0}\hat{\psi}^{(j_0,l_0,n)}(z), & j_0=3,4, 
\end{cases}
\end{align}
with 
\begin{align}
\label{hpsi_0}
\hat{\psi}^{(j_0,l_0,n)}(z)
=Q^{(j_0)}(z)
\left(p^{(j_0)}_{l_0}(z)P'_{n}(z)-(p^{(j_0)}_{l_0}(z))'P_{n}(z)\right)-P^{(j_0)}(z)p^{(j_0)}_{l_0}(z)P_{n}(z), 
\end{align}
where the superscripts $j_0$ and $l_0$ stand for the type and the degree of the polynomial part of the seed function  (i.e., $p^{(j_0)}_{l_0}(z)$ for $j_0=1,2$ and $z^{l_0}p^{(j_0)}_{l_0}(z)$ for $j_0=3,4$), respectively,  
\begin{align}
\label{PQ_1}
&P^{(1)}(z)=0, \quad P^{(2)}(z)=\alpha+\beta, \quad P^{(3)}(z)=1+\alpha, \quad P^{(4)}(z)=-1+\beta+(1+\alpha)z, \\
\label{PQ_2}
&Q^{(1)}(z)=1, \quad Q^{(2)}(z)=1-z, \quad Q^{(3)}(z)=-z, \quad Q^{(4)}(z)=z(1-z),
\end{align}
and 
\begin{align}
\label{quasi_P_1}
&p^{(1)}_n(z)=P_n(z; \alpha,\beta),&
&p^{(2)}_n(z)=P_n(z; -\beta,-\alpha),& 
\\
\label{quasi_P_2}
&p^{(3)}_n(z)=P_n(z^{-1}; \alpha,\beta),& \quad 
&p^{(4)}_n(z)=P_n(z^{-1}; -\beta,-\alpha).& \quad 
\end{align}
The degrees of $P^{(j_0,l_0,n)}(z)$ are given by 
\begin{align}
\label{deg_XP0}
\deg{P^{(j_0,l_0,n)}(z)}=n+l_0-\delta_{1,j}+\delta_{4,j}, 
\end{align}
where $\delta_{i,j}=1$ if $i=j$, and $\delta_{i,j}=0$ otherwise.

In this paper, we will discuss the recurrence relations satisfied by these four types of 
exceptional HR polynomials. 
Different from the cases of (classical) LBP, and similar to the cases of XOP, 
it turns out that XLBP satisfies longer recurrence relations. 
The type of recurrence relations we found is in the following shape, for large enough $n$, 
\begin{align*}
q(z)\sum^{l_0+1}_{l=0}a_l^{(j_0,l_0,n)}P^{(j_0,l_0,n-l)}(z) 
=\sum^{n+l_0+1}_{j=n-l_0}b_j^{(j_0,l_0,n)}P^{(j_0,l_0,j)}(z), 
\end{align*}
where $q(z)$ is a polynomial in $z$. 
More precisely, we have 
\begin{itemize}
\item[(0)] If $l_0=0$, then an exceptional HR polynomial returns to an HR polynomial, the recurrence relation is 
(already known, see (\ref{TTRR_HR})): 
\[
z(P_n(z)+b_nP_{n-1}(z))=P_{n+1}(z)+d_nP_n(z);
\]
\item[(1)] If $l_0\geq 1$ and  $n\geq k$, then the exceptional HR polynomials satisfy 
\[
q(z)\sum^{k}_{l=0}a_l^{(j_0,l_0,n)}P^{(j_0,l_0,n-l)}(z) 
=\sum^{n+l_0+1}_{j=0}b_j^{(j_0,l_0,n)}P^{(j_0,l_0,j)}(z); 
\]
\item[(2)] If $l_0\geq 1$ and $n\geq 2l_0+1$, then the exceptional HR polynomials satisfy 
\[
q(z)\sum^{l_0+1}_{l=0}a_l^{(j_0,l_0,n)}P^{(j_0,l_0,n-l)}(z) 
=\sum^{n+l_0+1}_{j=n-l_0}b_j^{(j_0,l_0,n)}P^{(j_0,l_0,j)}(z), 
\]
with the coefficients $a_l^{(j_0,l_0,n)}, b_j^{(j_0,l_0,n)}$ uniquely determined. 
\end{itemize}
Here the coefficients $\{a_l^{(j_0,l_0,n)}\}_{l=0,\ldots, l_0+1}$ and 
$\{b_j^{(j_0,l_0,n)}\}_{j=\max\{n-l_0,l_0+1\}, \ldots, n+l_0+1}$ 
do not depend on $z$. 

In this paper, we will prove the case (1) formally and 
the case (2) explicitly. 
Here ``explicitly" means that the factor $q(z)$ and the coefficients $a_l^{(j_0,l_0,n)}$ will be presented explicitly. 

\begin{theorem}
\label{thm_main_1}
If $l_0\geq 1$ and $n\geq k$, then the exceptional HR polynomials $\{P^{(j_0,l_0,n)}(z)\}_n$, $j_0\in\{1,2,3,4\}$, 
satisfy the following recurrence relations:  
\begin{align}
\label{RR-XHR_0}
q^{(j_0)}_{l_0}(z)\sum^{k}_{l=0}a_l^{(j_0,l_0,n)}P^{(j_0,l_0,n-l)}(z) 
=\sum^{n+l_0+1}_{j=0}b_j^{(j_0,l_0,n)}P^{(j_0,l_0,j)}(z), 
\end{align}
where $q^{(j_0)}_{l_0}(z)$ is a polynomial of degree $l_0+1$ defined by (\ref{qpi}), 
$a_l^{(j_0,l_0,n)}$, $l=0,\ldots,k$, and $b_j^{(j_0,l_0,n)}$, $j=0,\ldots,n+l_0+1$, are constants. 
\end{theorem}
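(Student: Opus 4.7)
I would prove (\ref{RR-XHR_0}) by a dimension-counting argument, assisted by the backward operator $\cB^{(j_0,l_0)}$ coming from the Darboux construction of \cite{XHR}. First, by (\ref{deg_XP0}) together with $\deg q^{(j_0)}_{l_0}=l_0+1$, both sides of (\ref{RR-XHR_0}) lie in the space $W_n$ of polynomials in $z$ of degree at most $D=n+2l_0+1-\delta_{1,j_0}+\delta_{4,j_0}$, which is precisely the degree of the top term $P^{(j_0,l_0,n+l_0+1)}$ on the right-hand side.

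Second, since $\{\deg P^{(j_0,l_0,j)}\}_{j\geq 0}$ is strictly increasing, the family $\{P^{(j_0,l_0,j)}\}_{j=0}^{n+l_0+1}$ is linearly independent and spans a subspace $V_n\subseteq W_n$ of dimension $n+l_0+2$, so $\mathrm{codim}_{W_n}(V_n)=l_0-\delta_{1,j_0}+\delta_{4,j_0}\leq l_0+1$. An easy consequence of the strict monotonicity of degrees is that $V\cap W_n=V_n$, where $V$ is the full span $\bigcup_N V_N$. Thus the identity (\ref{RR-XHR_0}) is equivalent to the statement that the LHS lies in $V_n$, the coefficients $b_j^{(j_0,l_0,n)}$ being its coordinates in the basis $\{P^{(j_0,l_0,j)}\}_j$. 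Now consider the linear map
\[
T:\dC^{k+1}\longrightarrow W_n/V_n, \qquad (a_0,\dots,a_k)\longmapsto\Bigl[q^{(j_0)}_{l_0}(z)\sum_{l=0}^{k}a_l\,P^{(j_0,l_0,n-l)}(z)\Bigr];
\]
the hypothesis $n\geq k$ ensures the LHS is well-defined, and $\dim(W_n/V_n)\leq l_0+1$ guarantees that $\ker T$ is non-trivial as soon as $k\geq l_0-\delta_{1,j_0}+\delta_{4,j_0}$. Any non-zero element of $\ker T$ supplies the required constants $a_l^{(j_0,l_0,n)}$.

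The main obstacle is to make the codimension statement concrete, so that one can actually verify when the LHS lies in $V_n$. Here the backward operator $\cB^{(j_0,l_0)}$ is indispensable: it identifies $V$ as (essentially) the image of the Darboux forward operator $\cF^{(j_0,l_0)}$ and pins down an explicit complement via $\ker\cB^{(j_0,l_0)}$, thereby furnishing linear functionals that cut out $V_n$ inside $W_n$. The Darboux intertwining further identifies multiplication by $q^{(j_0)}_{l_0}$ as precisely the operation that lifts a classical HR polynomial back into the exceptional span, which is the structural reason a polynomial multiplier of degree $l_0+1$ suffices to close the recurrence. Verifying this uniformly across the four cases $j_0\in\{1,2,3,4\}$, with the correct degree shifts $-\delta_{1,j_0}+\delta_{4,j_0}$ throughout, is the principal technical point.
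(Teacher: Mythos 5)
Your reduction of (\ref{RR-XHR_0}) to the statement that the left-hand side lies in $V_n=\mathrm{span}\{P^{(j_0,l_0,j)}\}_{j=0}^{n+l_0+1}$ is correct, and the degree bookkeeping (codimension $l_0-\delta_{1,j_0}+\delta_{4,j_0}$ of $V_n$ in $W_n$) is accurate. But the argument you then give does not establish that membership. The kernel-of-$T$ dimension count only produces \emph{some} nonzero tuple $(a_0,\dots,a_k)$, and only under the extra hypothesis $k\geq l_0-\delta_{1,j_0}+\delta_{4,j_0}$, which is not part of the theorem: the statement is meant to hold for every $k\le n$, in particular for $k=0$, where it asserts $q^{(j_0)}_{l_0}\cdot P^{(j_0,l_0,n)}\in V_n$ and your count gives nothing. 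Moreover, the dimension count makes no use of the specific polynomial $q^{(j_0)}_{l_0}$ from (\ref{qpi}); it would ``prove'' the same weak statement for an arbitrary polynomial of degree $l_0+1$, whereas the theorem is genuinely about this particular choice. The entire content of the theorem is concentrated in the step you defer to your final paragraph, and that paragraph describes a strategy rather than carrying one out.

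What the paper actually does at that point is apply the backward operator $\cM$ of (\ref{back_2}) and compute (Lemma \ref{lem_qpi}) that
$\cM[q^{(j_0)}_{l_0}(z)\widehat{\psi}^{(j_0,l_0,m)}(z)]
=\Xi_m^{(j_0,l_0)}q^{(j_0)}_{l_0}(z)P_m(z;\alpha+1,\beta-1)+\pi^{(j_0)}_{l_0}(z)P^{(j_0,l_0,m)}(z)$,
where $\pi^{(j_0)}_{l_0}(z)$, built from $\epsilon(z)A_1(z)(q^{(j_0)}_{l_0})'(z)$, is a \emph{polynomial} precisely because $q^{(j_0)}_{l_0}$ is chosen as an antiderivative of the seed polynomial; hence the image is a polynomial of degree $m+l_0+1$ and expands in $\{P_j(z;\alpha+1,\beta-1)\}_{j\le m+l_0+1}$. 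Lemma \ref{lem_Mq} (the converse: if $\cM[p]$ is a polynomial then $p$ lies in the span of the $\widehat{\psi}^{(j_0,l_0,j)}$) then pulls this back to membership of the left-hand side in $V_n$, for \emph{arbitrary} coefficients $a_l$ and arbitrary $k\le n$. To salvage your approach you would have to supply this computation, or an equivalently explicit system of linear functionals cutting out $V_n$ inside $W_n$ together with the verification that they annihilate each $q^{(j_0)}_{l_0}P^{(j_0,l_0,m)}$; without that, the proposal has a genuine gap.
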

Note that the equation (\ref{RR-XHR_0}) is nontrivial for the exceptional polynomials since there are gaps in their degree 
sequence. 

More precisely, the coefficients $\{a_l^{(j_0,l_0,n)}\}$ and $\{b_l^{(j_0,l_0,n)}\}$ can be uniquely determined. 
\begin{theorem}
\label{thm_main_2}
If $l_0\geq 1$ and $n\geq 2l_0+1$, then the exceptional HR polynomials $\{P^{(j_0,l_0,n)}(z)\}_n$, $j_0\in\{1,2,3,4\}$, 
satisfy the following recurrence relations:  
\begin{align}
\label{RR-XHR}
q^{(j_0)}_{l_0}(z)\sum^{l_0+1}_{l=0}a_l^{(j_0,l_0,n)}P^{(j_0,l_0,n-l)}(z) 
=\sum^{n+l_0+1}_{j=n-l_0}b_j^{(j_0,l_0,n)}P^{(j_0,l_0,j)}(z), 
\end{align}
where $q^{(j_0)}_{l_0}(z)$ is a polynomial of degree $l_0+1$ defined by (\ref{qpi}), 
the coefficients $a_l^{(j_0,l_0,n)}$, $l=0,\ldots,l_0+1$, are given by proposition \ref{prop_Coe_a}. 
\end{theorem}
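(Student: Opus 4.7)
The proof builds on Theorem \ref{thm_main_1}, which already establishes a recurrence of the form (\ref{RR-XHR_0}) with right-hand side supported in the wide window $[0,\,n+l_0+1]$. The task for Theorem \ref{thm_main_2} is to show that, for the specific choice of $a_l^{(j_0,l_0,n)}$ supplied by Proposition \ref{prop_Coe_a}, this window collapses to $[n-l_0,\,n+l_0+1]$, and that the resulting coefficients are uniquely determined. My plan is to transport the identity to the classical side via the backward Darboux operator $\mathcal{B}$, which sends an exceptional polynomial $P^{(j_0,l_0,n)}(z)$ to a scalar multiple of its classical partner $P_n(z)$.

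The first ingredient is a commutation identity for the forward Darboux operator. Reading (\ref{hpsi_0}) as $\hat{\psi}^{(j_0,l_0,n)} = \mathcal{A}[P_n]$ with $\mathcal{A}[f] := Q^{(j_0)}(p^{(j_0)}_{l_0}f' - (p^{(j_0)}_{l_0})' f) - P^{(j_0)} p^{(j_0)}_{l_0} f$, a short computation yields
\[
\mathcal{A}[\pi(z)\,f(z)] \;=\; \pi(z)\,\mathcal{A}[f(z)] \;+\; Q^{(j_0)}(z)\,p^{(j_0)}_{l_0}(z)\,\pi'(z)\,f(z)
\]
for any polynomial $\pi$; a dual identity holds for $\mathcal{B}$. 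In both cases the correction is supported on the product $Q^{(j_0)}p^{(j_0)}_{l_0}$, whose degree $l_0+1$ matches that of $q^{(j_0)}_{l_0}$. Applying $\mathcal{B}$ to (\ref{RR-XHR}) turns the right-hand side into a classical expansion $\sum_{j=n-l_0}^{n+l_0+1} \tilde{b}_j\,P_j(z)$ of length $2l_0+2$, while the left-hand side becomes $q^{(j_0)}_{l_0}(z)\sum_l a_l^{(j_0,l_0,n)} P_{n-l}(z)$ plus explicit remainder terms that can be eliminated using the hypergeometric differential equation for $P_n$.

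The classical reduction then proceeds by iterating the LBP three-term relation $z(P_n+b_nP_{n-1})=P_{n+1}+d_nP_n$ exactly $l_0+1$ times. Although $zP_n$ alone generically has an infinite expansion in $\{P_j\}$ for LBP, the structured combination $q^{(j_0)}_{l_0}(z)\sum_l a_l^{(j_0,l_0,n)}P_{n-l}(z)$ admits a finite, window-supported $P_j$-expansion exactly when the $a_l$ solve a particular triangular linear system, whose solution is precisely what Proposition \ref{prop_Coe_a} records. Applying $\mathcal{A}$ to return to the exceptional side then recovers (\ref{RR-XHR}) with $b_j^{(j_0,l_0,n)}$ supported in $[n-l_0,\,n+l_0+1]$. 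The main obstacle is the bookkeeping needed to identify those $a_l^{(j_0,l_0,n)}$ explicitly and to verify the cancellation of all lower-order tails; fortunately, the four types $j_0\in\{1,2,3,4\}$ are related by the $(\alpha,\beta)\mapsto(-\beta,-\alpha)$ and $z\mapsto z^{-1}$ symmetries visible in (\ref{PQ_1})--(\ref{quasi_P_2}), so handling $j_0=1$ in detail should propagate to the other cases. Uniqueness finally follows because the hypothesis $n\ge 2l_0+1$ places the exceptional polynomials in the window $[n-l_0,\,n+l_0+1]$ strictly above any gap in the degree sequence (\ref{deg_XP0}), so they are linearly independent and a leading-coefficient comparison determines each $b_j^{(j_0,l_0,n)}$ and, by back-substitution, each $a_l^{(j_0,l_0,n)}$.
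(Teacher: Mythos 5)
Your overall strategy --- transport the identity to the classical side with the backward operator, reduce there by iterating the three-term relation, and identify the $a_l$ from a triangular system tied to the twisted-parameter expansions --- is indeed the paper's strategy, and your commutation identity for $\mathcal{A}$ is the forward counterpart of the paper's key formula (\ref{MqP_1}). But there are two genuine gaps. First, the remainder term. Applying the backward operator $\cM$ to $q^{(j_0)}_{l_0}(z)\widehat{\psi}^{(j_0,l_0,n)}(z)$ produces, besides $\Xi^{(j_0,l_0)}_n q^{(j_0)}_{l_0}(z)P_n(z;\alpha+1,\beta-1)$, the remainder $\pi^{(j_0)}(z)P^{(j_0,l_0,n)}(z)$; this is a polynomial of degree comparable to the main term, and one must prove that its components along $P_j(z;\alpha+1,\beta-1)$ with $j\le n-l_0-1$ cancel in the weighted sum over $l$. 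Your assertion that these terms ``can be eliminated using the hypergeometric differential equation for $P_n$'' does not accomplish this: the paper needs a dedicated lemma establishing (\ref{cond_pi_P_0}), which in turn rests on the auxiliary expansion Corollary \ref{cor_expan_k} and a sequence of twisted-parameter manipulations --- and even then that statement holds only for $j_0=3,4$.

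Second, the symmetry reduction fails. The maps $(\alpha,\beta)\mapsto(-\beta,-\alpha)$ and $z\mapsto z^{-1}$ relate the seed functions, but they do not carry the window-support statements between types: the paper explicitly records that $\pi^{(j_0)}(z)\sum_l \tilde{a}_l^{(j_0,l_0,n)}P^{(j_0,l_0,n-l)}(z)$ lands in the required span only for $j_0=3,4$, while the dual statement needed for the other route (a window-supported expansion of $z^{l_0+2}\sum_l a_{n,l_0}^{(l)}P_{n-l}(z)$) fails precisely for $j_0=3,4$. Consequently the paper must run two genuinely different arguments: a direct one for (\ref{expre_Coe_a_34}) and an adjoint/inner-product one for (\ref{expre_Coe_a_12}) built on the Laurent polynomials $\tilde{Q}^{(j_0,l_0)}_m(z)$ of (\ref{c_nm_0}). ``Handle $j_0=1$ in detail and propagate'' therefore cannot work as written. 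A smaller issue: the return to the exceptional side is not achieved by applying the forward operator to the classical identity (the combination $q^{(j_0)}_{l_0}(z)\sum_l a_l^{(j_0,l_0,n)}\widehat{\psi}^{(j_0,l_0,n-l)}(z)$ is not an eigenfunction of the composed map), but by the characterization of $\cM^{-1}(\mathbb{R}[z])$ in Lemma \ref{lem_Mq}, which also requires controlling $\mathrm{Ker}\,\cM$; note too that $\cM$ lands on $P_n(z;\alpha+1,\beta-1)$ rather than $P_n(z)$, which is why the ratios $\Xi^{(j_0,l_0)}_{n}/\Xi^{(j_0,l_0)}_{n-l}$ enter the coefficients for $j_0=3,4$.
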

If we omit the repetition of $P^{(j_0,l_0,m)}$'s on both sides, it is easy to count the total terms of this 
recurrence relation, which is $3l_0+4$.

This paper unfolds as follows. 
In section 2, we briefly review how to construct XLBP by using Darboux transformations of GEVP. 
Several important operators and related properties are also introduced. 
In section 3, the definition and classical properties of HR polynomials are presented. 
We also provide some essential properties such as the expansions of HR polynomials with twisted parameters 
and the expansion of the product of a polynomial with a certain degree and the linear combination of a sequence of 
HR polynomials, both of which contribute much to the proof of the recurrence relations of the 
exceptional HR polynomials. 
In section 4, we introduce the four types of exceptional HR polynomials and some basic properties. 
Section 5 contains the key idea of the proof of our main result. 
The backward operator which maps an exceptional HR polynomial to an HR polynomial plays an important role 
in our proof of the recurrence relations satisfied by the exceptional HR polynomials. 
In section 6, we provide several concrete examples of these recurrence relations explicitly. 
Finally, we give a concluding remark in section 7.

\section{Darboux transformation of a Generalized eigenvalue problem}
In this section, we provide a concise overview of the construction of 
exceptional Laurent biorthogonal polynomials \cite{XHR}, 
which involves the Darboux transformation of a GEVP and its adjoint problem. 
Certain operators will be crucial in establishing the recurrence relation. 

Consider the GEVP 
\begin{align}
\label{GEVP_0}
\cL_1\psi(z)=\lambda\cL_2\psi(z), 
\end{align}
where $(\lambda, \psi(z) )$ is an eigen-pair of this GEVP, and 
\begin{align}
\label{opL}
& \cL_j = A_j(z) \partial_z^2 + B_j(z) \partial_z +C_j(z) I,\quad j=1,2,
\end{align}
with functions $A_j(z), B_j(z), C_j(z)$, $j=1,2$, of $z$, 
One can decompose $\cL_1$ and $\cL_2$ into 
\begin{align}
 & \cL_j = \widetilde{\phi}_j(z)\left(\cG_j \cF + I \right) , \quad j=1,2, 
\end{align}
where
\begin{align}
\label{opG}
&\cG_j = \dfrac{1}{\widetilde{\phi}_j(z)\phi(z)}
\left(A_j(z)\partial_z+B_j(z)\right)\phi(z)\epsilon(z) I, \quad j=1,2, \\
\label{opF}
&\cF = \dfrac{\phi(z)}{\varepsilon(z)}\partial_z\dfrac{1}{\phi(z)} I, \\
\label{tphi}
&\widetilde{\phi}_j(z)=\dfrac{\cL_j\phi(z)}{\phi(z)}, \quad j= 1,2.
\end{align}
The function $\phi(z)$ can be chosen as an eigenfunction of (\ref{GEVP_0}) with respect to an eigenvalue $\kappa$ 
such that
$
 \left(\cL_1 -  \kappa \cL_2\right)\phi(z) =0,
$
then 
$
 \widetilde{\phi}_1(z) = \kappa \widetilde{\phi}_2(z).
$

At the same time, consider the adjoint GEVP for the inner product on the unit circle defined as:  
\begin{align}
\label{inner_0}
\langle f(z), g(z) \rangle
:=\dfrac{1}{2\pi}\int^{2\pi}_{0}f(e^{ix})\overline{g(e^{ix})}dx
=\dfrac{1}{2\pi}\int^{2\pi}_{0}f(e^{ix})g(e^{-ix})dx.
\end{align}
It was shown in \cite{XHR} (Lemma 3.4.) that the adjoint of the operator $C(z)I$ is 
\begin{align}
\label{ha_C0}
&
\overline{(C(z)I)^{\ast}}=C(z^{-1})I, 
\end{align}
while the formal adjoints of the operators $B(z)\partial_z$ and $A(z)\partial_z^2$ are 
\begin{align}
\label{ha_C}
&
\overline{(B(z)\partial_z)^{\ast}}
=z\partial_z B(z^{-1})z I, \quad
\overline{(A(z)\partial_z^2)^{\ast}}
=z^2\partial_z^2 A(z^{-1})z^2 I, 
\end{align}
where $A(z)\in C^{2}$, $B(z)\in C^{1}$ and $C(z)$ are arbitrary functions with real parameters. 

The adjoint GEVP of (\ref{GEVP_0}) is 
\begin{align}
\label{aGEVP}
\overline{\cL_1^{\ast}}\psi^{\ast}(z) = \tau \overline{\cL_2^{\ast}}\psi^{\ast}(z),
\end{align}
with 
\begin{align}
\label{opL_ast}
\overline{\cL_j^{\ast}} &= z^2\partial_z^2 A_{j}(z^{-1})z^2+z\partial_z B_{j}(z^{-1})z+C_{j}(z^{-1})I,  \quad j=1,2, 
\end{align}
where $(\tau, \psi^{\ast}(z) )$ is an eigen-pair of the adjoint GEVP. 
Similarly, the adjoint operators can be decomposed into 
\begin{align}
\label{decom_haL}
\overline{\cL_j^{\ast}}&= \left(\overline{\cF^{\ast}} \overline{\cG_j^{\ast}} + I \right) \overline{\widetilde{\phi}_i(z)} I, 
\quad j=1,2, 
\end{align}
where 
\begin{align}
\label{ophaF}
&\overline{\mathcal{F}^{\ast}}
=z\partial_z\dfrac{z}{\epsilon(z^{-1})} I-\dfrac{\phi'(z^{-1})}{\epsilon(z^{-1})\phi(z^{-1})} I, \\
\label{ophaG}
&\overline{\mathcal{G}^{\ast}_i}
=z\partial_z\dfrac{zA_i(z^{-1})\epsilon(z^{-1})}{\widetilde{\phi}_i(z^{-1})} I
+\epsilon(z^{-1})\dfrac{B_i(z^{-1})+A_i(z^{-1})\dfrac{\epsilon'(z^{-1})}{\epsilon(z^{-1})}
+A_i(z^{-1})\dfrac{\phi'(z^{-1})}{\phi(z^{-1})}}{\widetilde{\phi}_i(z^{-1})} I. 
\end{align}

By introducing the transformed functions
\begin{align}
\label{DT_f1}
 \widehat{\psi}(z) = {\cF}{\psi}(z), \qquad
 \widehat{\psi^{\ast}}(z) = (\kappa \overline{\cG_1^*} - \tau\overline{\cG_2^*})\overline{\widetilde{\phi}_2(z)}\psi^{\ast}(z), 
\end{align}
the GEVP (\ref{GEVP_0}) and the adjoint GEVP (\ref{aGEVP}) can be rewritten into 
\begin{align}
\label{backward_hpsi}
 & (\kappa \cG_1-\lambda \cG_2) \widehat{{\psi}}(z) = (\lambda - \kappa) {\psi(z)}, \\
\label{backward_hchi}
 &\overline{\cF^{\ast}} \widehat{\psi^{\ast}}(z) = (\tau - \kappa) \overline{\widetilde{\phi}_2(z)}{\psi^{\ast}(z)}, 
\end{align}
where the left-hand sides can be seen as backward operators from $\widehat \psi(z)$ to $\psi(z)$ 
and from $\widehat \psi^{\ast}(z)$ to $\widetilde\phi_2(z) \psi^{\ast}(z)$, respectively.
Moreover, by applying $\cF$ and $\kappa \overline{\cG_1^{\ast}}- \tau \overline{\cG_2^{\ast}}$  to 
(\ref{backward_hpsi}) and (\ref{backward_hchi}), respectively, one obtains
\begin{align}
\label{mGEVP_00}
 & {\cF} (\kappa \cG_1-\lambda \cG_2) \widehat{{\psi}}(z) = (\lambda - \kappa) \widehat{{\psi}}(z), \\
\label{maGEVP_00}
 &(\kappa \overline{\cG_1^{\ast}}- \tau \overline{\cG_2^{\ast}}) \overline{\cF^{\ast}} \widehat{\psi^{\ast}}(z) = (\tau - \kappa) \widehat{\psi^{\ast}}(z), 
\end{align}
which lead to 
the Darboux-transformed GEVPs corresponding to (\ref{GEVP_0}) and (\ref{aGEVP}): 
\begin{align}
\label{XL1_ast}
 \left(\widehat{\cL}_1 - \lambda \widehat{\cL}_2\right) \widehat{\psi}(z) = 0, \qquad
 \left(\widehat{\cL}_1^* - \tau \widehat{\cL}_2^*\right) \widehat{\psi^{\ast}}(z) = 0.
\end{align}
where 
\begin{align}
\label{factor_L0}
&\widehat{\cL}_1 = \kappa\left({\cF} \cG_1 + I\right),
\hspace{6mm} \widehat{\cL}_2 = {\cF} \cG_2 + I,  \\
\label{factor_L1}
&\widehat{\cL^{\ast}_1}=\kappa(\overline{\cG^{\ast}_1\cF^{\ast}}+I),  \quad
\widehat{\cL^{\ast}_2}=\overline{\cG^{\ast}_2\cF^{\ast}}+I. 
\end{align}

The biorthogonality relation of $\psi$ and $\overline{\cL_2^{\ast}}\psi^{\ast}$:
\begin{align}
\label{biorth_0}
 \langle \psi, \overline{\cL_2^{\ast}}\psi^{\ast} \rangle = h \delta_{\lambda,\tau},
\end{align}
can be formally obtained from the GEVPs \eqref{GEVP_0} and \eqref{aGEVP} as follows
\begin{align}
\label{adness}
(\tau-\lambda)\langle \psi, \overline{\cL_2^{\ast}}\psi^{\ast} \rangle
=\langle \psi, (\overline{\cL_1^{\ast}}-\lambda\overline{\cL_2^{\ast}})\psi^{\ast} \rangle
=\langle (\cL_1-\lambda\cL_2)\psi, \psi^{\ast} \rangle=0. 
\end{align}
where $h$ is a constant. 

Similarly, the biorthogonality relation of $\widehat{\psi}$ and $\widehat{\cL_2^{\ast}}\widehat{\psi^{\ast}}$:
\begin{align}
\label{Xbiorth}
\langle \widehat{\psi}, \widehat{\cL}_2^*\widehat{\psi^{\ast}}\rangle 
=(\tau - \kappa) \langle \psi, \cL_2^*\psi^{\ast} \rangle
=(\tau - \kappa)  h \delta_{\lambda,\tau} \quad (\mbox{if }\lambda \ne \kappa),
\end{align}
follows from 
\begin{align}
\label{adness_X}
 \langle \widehat{\psi}, \widehat{\cL}_2^*\widehat{\psi^{\ast}} \rangle
 = \langle \cF \psi, \kappa(\overline{\cG_1^{\ast}} -\overline{\cG_2^{\ast}})\overline{\widetilde{\phi}_2}\psi^{\ast} \rangle
 = \langle \psi, (\overline{\cL_1^{\ast}} -\kappa \overline{\cL_2^{\ast}})\psi^{\ast} \rangle
 = (\tau - \kappa )\langle \psi, \overline{\cL_2^{\ast}}\psi^{\ast} \rangle. 
\end{align}

\subsection{quasi-Laurent-polynomial eigenfunctions}
A general class of eigenfunctions of a GEVP is called quasi-Laurent-polynomial eigenfunctions:
\[
 \left(\cL_1 -  \kappa \cL_2\right)\phi(z) =0, \quad \phi(z)=\xi(z)p(z), 
\] 
where $\xi(z)$ is a gauge function and $p(z)$ is a Laurent polynomial. 

\begin{lemma}[\cite{XHR}]
\label{lem_quasi}
There are four classes of quasi-Laurent-polynomial eigenfunctions of the GEVP defined by (\ref{opL}): 
\begin{align}
\label{quasi}
(L_1-\theta^{(j)}_nL_2)\phi^{(j,n)}(z)=0, \quad \phi^{(j,n)}(z)=\xi_j(z)p^{(j)}_n(z), \quad
j=1,2,3,4, 
\end{align}
which can be expressed in terms of: 
\begin{align*}
&p^{(1)}_n(z)=P_n(z; \alpha,\beta),&
&\xi_1(z)=1,&  &\theta^{(1)}_n=n; & \\
&p^{(2)}_n(z)=P_n(z; -\beta,-\alpha),& 
&\xi_2(z)=(1-z)^{-\alpha-\beta},& &\theta^{(2)}_n=n-\alpha-\beta; & \\
&p^{(3)}_n(z)=P_n(z^{-1}; \alpha,\beta),& \quad 
&\xi_3(z)=(-z)^{-1-\alpha},& \quad &\theta^{(3)}_n=-n-1-\alpha-\beta; &\\
&p^{(4)}_n(z)=P_n(z^{-1}; -\beta,-\alpha),& \quad 
&\xi_4(z)=(-z)^{-1+\beta}(1-z)^{-\alpha-\beta},& \quad 
&\theta^{(4)}_n=-n-1. &
\end{align*}
\end{lemma}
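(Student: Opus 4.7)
The plan is first to make the underlying GEVP explicit in the Hendriksen-van Rossum setting. In~(\ref{opL}) the coefficients $A_j, B_j, C_j$ are polynomial in $z$ (with $\deg A_j \le 2$, $\deg B_j \le 1$, $\deg C_j = 0$) and are essentially determined by demanding that the classical HR polynomial $P_n(z;\alpha,\beta)$ be an eigenfunction with eigenvalue $n$. I would recover them by rewriting the Gauss hypergeometric ODE satisfied by $P_n(z;\alpha,\beta)$ as a pencil in the spectral parameter $n$; the two $n$-independent pieces play the roles of $\cL_1$ and $\cL_2$. This settles case $j=1$ immediately with $\xi_1 \equiv 1$ and $\theta^{(1)}_n = n$.

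For the remaining cases I would exploit two natural symmetries of this GEVP. The first is the parameter twist $(\alpha,\beta)\mapsto(-\beta,-\alpha)$: conjugating $\cL_1, \cL_2$ by the gauge factor $\xi_2(z)=(1-z)^{-\alpha-\beta}$ swaps these parameters while preserving the overall operator structure, so $\xi_2(z)P_n(z;-\beta,-\alpha)$ solves the original GEVP, and tracking the constant term produces the shifted eigenvalue $\theta^{(2)}_n = n-\alpha-\beta$. The second symmetry is the inversion $z\mapsto z^{-1}$: using $\partial_z = -z^{-2}\partial_{z^{-1}}$ one checks that conjugation by $\xi_3(z)=(-z)^{-1-\alpha}$ turns $P_n(z^{-1};\alpha,\beta)$ into a solution of the GEVP with eigenvalue $\theta^{(3)}_n = -n-1-\alpha-\beta$. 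Composing the two symmetries then yields case $j=4$ with gauge $\xi_4(z)=(-z)^{-1+\beta}(1-z)^{-\alpha-\beta}$ and eigenvalue $\theta^{(4)}_n = -n-1$; the additivity of the two eigenvalue shifts is exactly what produces this last value.

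The genuinely nontrivial step, and the one I expect to be the main obstacle, is showing that the above list is \emph{exhaustive}: every quasi-Laurent-polynomial eigenfunction $\phi(z)=\xi(z)p(z)$ must have $\xi$ proportional to one of $\xi_1, \xi_2, \xi_3, \xi_4$. My approach here would be a local Frobenius analysis at the three singular points $z=0$, $z=1$, $z=\infty$ of the hypergeometric-type pencil $\cL_1 - \lambda\cL_2$. At each singular point there are two indicial exponents, and the requirement that $p(z)$ be a Laurent polynomial forces the difference between the exponent actually carried by $\phi(z)$ and one of the indicial exponents to be a nonnegative integer (with the analogous requirement at $z=\infty$ to prevent unbounded growth). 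Running through the admissible combinations of exponent choices at $z=0$ and $z=\infty$, and matching the resulting gauge to the remaining singularity at $z=1$, singles out precisely the four families in the lemma. The verification that each eigenvalue then takes the claimed form is a routine leading-coefficient calculation; the real work lies in turning the Frobenius bookkeeping into a clean proof that no other gauge can support a Laurent polynomial eigencomponent.
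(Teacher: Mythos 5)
The paper does not prove this lemma at all: it is imported verbatim from the authors' earlier work \cite{XHR}, so there is no in-paper argument to compare yours against. That said, your strategy is the natural one and is essentially correct. The pencil is indeed read off from (\ref{Deq_HR}) by splitting the $n$-dependence, which gives case $j=1$; and the gauge conjugations do work as you claim — for instance one checks directly that $\xi_2^{-1}\bigl(L_1-(n-\alpha-\beta)L_2\bigr)\xi_2$ with $\xi_2=(1-z)^{-\alpha-\beta}$ equals the operator $L_1-nL_2$ with parameters $(-\beta,-\alpha)$, which annihilates $P_n(z;-\beta,-\alpha)$, and similarly for the inversion $z\mapsto z^{-1}$ and the composition giving $j=4$. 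Two caveats on the exhaustiveness half. First, it is more than the lemma (as used in this paper) actually requires: the four families are only needed as seed functions for the Darboux transformation, so existence by direct verification already suffices. Second, if you do pursue the Frobenius classification, note that the indicial exponents of $L_1-\lambda L_2$ at $z=0$ are $0$ and $\beta+\lambda$, i.e.\ they depend on the spectral parameter, whereas the gauge $\xi(z)$ must be independent of $n$; the argument goes through because the $n$-dependent part of the exponent ($z^{-n}$) is exactly what the Laurent-polynomial factor $p^{(j)}_n(z^{-1})$ absorbs, leaving the $\lambda$-independent offsets $0$ or $-1-\alpha$ (resp.\ $\beta-1$) at $z=0$ and $0$ or $-\alpha-\beta$ at $z=1$, whose $2\times 2$ combinations give precisely the four gauges $\xi_1,\ldots,\xi_4$. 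You should make that bookkeeping explicit, since as written your sketch treats the exponents as fixed.
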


The adjoint problem of (\ref{quasi}) is
\begin{equation}
\label{haGEVP_HR}
(\overline{L_1^{\ast}}-\mu_n\overline{L^{\ast}_2})\phi^{\ast}(z)=0, 
\end{equation}
where 
\begin{align}
\label{ophaL1}
 &\overline{L^{\ast}_1}
 =z^2(z-1)\partial_z^2-z(2+\alpha+(-3+\beta)z)\partial_z+(1-\beta)z I, \\
\label{ophaL2}
 &\overline{L^{\ast}_2}
 =z(z-1)\partial_z+(-1-\alpha+z) I. 
\end{align}

\begin{lemma}[\cite{XHR}]
\label{lem_haquasi}
There are four classes of quasi-Laurent-polynomial eigenfunctions of the adjoint GEVP (\ref{haGEVP_HR})
\[
(\overline{L_1^{\ast}}-\mu_n\overline{L^{\ast}_2})\phi^{(j,n)\ast}(z)=0, \quad 
\phi^{(j,n)\ast}(z)
=w_j(z)\tilde{p}^{(j)}_n(z), \quad j=1,2, 3,4,
\]
where
\begin{align*}
&\tilde{p}^{(1)}_n(z)=P_n(z; \beta-1, \alpha+1),& \hspace{1mm} 
&w_1(z)=(1-z)^{\alpha+\beta}(-z)^{-1-\alpha},& \hspace{1mm} 
&\mu^{(1)}_n=n; & \\
&\tilde{p}^{(2)}_n(z)=P_n(z;-\alpha-1,-\beta+1),&  \hspace{1mm} 
&w_2(z)=(-z)^{-1-\alpha},& \hspace{1mm} 
&\mu^{(2)}_n=n-\alpha-\beta; & \\
&\tilde{p}^{(3)}_n(z)=P_n(z^{-1}; \beta-1, \alpha+1),& \hspace{1mm} 
&w_3(z)=(1-z)^{\alpha+\beta}(-z)^{-1-\alpha-\beta}, &\\
&\hspace{4.7mm}\mu^{(3)}_n=-n-1-\alpha-\beta; \nonumber \\
&\tilde{p}^{(4)}_n(z)=P_n(z^{-1};-\alpha-1,-\beta+1),& \hspace{1mm} 
&w_4(z)=z^{-1},& \hspace{1mm} 
&\mu^{(4)}_n=-n-1. &
\end{align*}
\end{lemma}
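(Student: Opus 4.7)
The plan is to treat Lemma~\ref{lem_haquasi} as the mirror image of Lemma~\ref{lem_quasi}: we want to exhibit four gauge/polynomial factorizations of eigenfunctions of the adjoint GEVP (\ref{haGEVP_HR}), and the most economical way is to conjugate $\overline{L_1^{\ast}}$ and $\overline{L_2^{\ast}}$ by each candidate weight $w_j(z)$ so that the resulting operators, when applied to polynomials in $z$ or $z^{-1}$, coincide with the hypergeometric-type operators whose polynomial eigenfunctions are precisely the HR polynomials with the claimed shifted parameters. A useful sanity check running throughout is that biorthogonality (\ref{biorth_0}) forces $\mu^{(j)}_n = \theta^{(j)}_n$; this matches the formulas in the statement and will be automatic from the construction.

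In detail, I would proceed as follows. First, for each $j$, I write $\phi^{(j,n)\ast}(z) = w_j(z)\tilde{p}(z)$ and expand
\[
\bigl(\overline{L_1^{\ast}} - \mu\, \overline{L_2^{\ast}}\bigr)\bigl(w_j(z)\tilde{p}(z)\bigr)
\]
using the Leibniz rule and the explicit forms (\ref{ophaL1})--(\ref{ophaL2}). After dividing by $w_j(z)$ and collecting terms, one obtains a GEVP of the form $(\widetilde{L}_1^{(j)} - \mu\,\widetilde{L}_2^{(j)})\tilde{p}(z) = 0$ whose coefficients are rational in $z$. The second step is to match these coefficients against the GEVP satisfied by the ordinary HR polynomial $P_n(z;\alpha',\beta')$ (which is accessible from Lemma~\ref{lem_quasi} with $j=1$ and the parameters $(\alpha,\beta)$ replaced). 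The matching determines both the parameter shift $(\alpha',\beta')$ and the eigenvalue $\mu^{(j)}_n$. For $j=1$, one expects $(\alpha',\beta') = (\beta-1,\alpha+1)$; for $j=2$, the additional factor $(1-z)^{\alpha+\beta}$ in $w_2$ absorbs the shift that yields $(-\alpha-1,-\beta+1)$.

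For $j=3,4$, instead of working with $z$ directly, I would first perform the substitution $z \mapsto z^{-1}$, which sends $z\partial_z$ to $-z\partial_z$ and $z^2\partial_z^2$ to $z^2\partial_z^2 + 2z\partial_z$, thereby converting $\overline{L_1^{\ast}}$ and $\overline{L_2^{\ast}}$ into operators in the variable $\zeta = z^{-1}$. In $\zeta$-coordinates the eigenfunction ansatz becomes $w_j(\zeta^{-1})\tilde{p}^{(j)}_n(\zeta)$ with $\tilde{p}^{(j)}_n(\zeta) = P_n(\zeta;\beta-1,\alpha+1)$ or $P_n(\zeta;-\alpha-1,-\beta+1)$, reducing the analysis to the same mechanism as in cases $j=1,2$ and producing the eigenvalues $\mu^{(3)}_n=-n-1-\alpha-\beta$ and $\mu^{(4)}_n=-n-1$.

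The main obstacle is bookkeeping rather than conceptual: verifying that the conjugated operators have exactly the coefficient structure of the HR-defining GEVP with the predicted parameter shift requires careful algebraic expansion, since the gauges $w_j$ involve non-integer powers of both $z$ and $(1-z)$ whose derivatives produce numerous cross terms. I would organize the verification by first treating $\overline{L_2^{\ast}}$ (a first-order operator, hence easier) to fix the ansatz $(\alpha',\beta')$, and then confirm consistency with $\overline{L_1^{\ast}}$, which requires checking that the genuinely quadratic-in-$\partial_z$ part reduces correctly; the coincidence $\mu^{(j)}_n=\theta^{(j)}_n$ together with the already established biorthogonality in (\ref{adness}) serves as both a guide and a final check that no spurious solutions have been introduced.
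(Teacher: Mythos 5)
The paper does not actually prove this lemma: it is imported verbatim from \cite{XHR}, so there is no in-paper argument to compare against. Your gauge-conjugation strategy is the natural way to verify it and it does work: writing $\phi^{(j,n)\ast}=w_j\tilde p$, expanding $(\overline{L_1^{\ast}}-\mu\,\overline{L_2^{\ast}})(w_j\tilde p)$ with the explicit forms (\ref{ophaL1})--(\ref{ophaL2}), and (for $j=3,4$) passing to $\zeta=z^{-1}$ via $z\partial_z\mapsto-\zeta\partial_\zeta$, $z^2\partial_z^2\mapsto\zeta^2\partial_\zeta^2+2\zeta\partial_\zeta$ reduces each case to the HR differential equation (\ref{Deq_HR}) with the shifted parameters; I spot-checked $j=4$, where $(\overline{L_1^{\ast}}+(n+1)\overline{L_2^{\ast}})[z^{-1}P_n(z^{-1};-\alpha-1,-\beta+1)]$ collapses to $\zeta$ times exactly the equation (\ref{Deq_HR}) for $P_n(\zeta;-\alpha-1,-\beta+1)$. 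Two small caveats. First, your plan to ``fix $(\alpha',\beta')$ by matching $\overline{L_2^{\ast}}$ alone'' is only a heuristic: for a GEVP what must be matched is the whole pencil $\widetilde L_1-\mu\widetilde L_2$ up to an overall (here $\zeta$-dependent) factor and an affine change of the spectral parameter, since the conjugated $\widetilde L_1$ acquires a constant term absent from the $L_1$ of (\ref{GEVP_HR}); it happens that $\widetilde L_2$ is separately proportional to the shifted $L_2$, but you should verify the pencil, not the operators one at a time. Second, the biorthogonality relation (\ref{adness}) only shows that eigenfunctions with distinct eigenvalues pair to zero; it does not by itself force $\mu^{(j)}_n=\theta^{(j)}_n$, so treat that coincidence as the consistency check you call it, not as an input to the derivation. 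With those provisos the proposal is correct and complete in outline.
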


\section{Hendriksen-van Rossum polynomials}
In this section, we present key properties of HR polynomials, 
including both previously known results from references \cite{XHR, HR, ABP}, 
and original findings by the authors. 

HR polynomials satisfy the recurrence relation: 
\begin{equation}
\label{TTRR_HR}
P_{n+1}(z)+d_nP_n(z)=z(P_n(z)+b_nP_{n-1}(z)), 
\end{equation}
which can be seen as a GEVP with eigenvalue $z$, 
and  
\begin{equation}
\label{TTRR_Coe_HR}
d_n:=
d_n(\alpha,\beta)
=-\dfrac{n+\beta}{n+\alpha+1}, \quad
b_n:=
b_n(\alpha,\beta)
=-\dfrac{n(n+\alpha+\beta)}{(n+\alpha)(n+\alpha+1)}. 
\end{equation}

HR polynomials are polynomial solutions of the linear second-order differential equation: 
\begin{equation}
\label{Deq_HR}
\left[z(1-z)\partial_z^2+(1-\beta-n-(2+\alpha-n)z)\partial_z\right]P_n(z)=-n(\alpha+1)P_n(z). 
\end{equation}
This equation can also be rewritten into a GEVP: 
\begin{equation}
\label{GEVP_HR}
L_1P_n(z)=\theta_nL_2P_n(z), \quad \theta_n=n, 
\end{equation}
with 
\begin{align}
\label{opL}
L_1=A_1(z)\partial_z^2+B_1(z)\partial_z, \quad L_2=B_2(z)\partial_z+C_2(z)I, 
\end{align}
where 
\begin{align}
\label{opL_ABC}
A_1(z)=z(1-z), \hspace{1.2mm} B_1(z)=(1-\beta-(2+\alpha)z), \hspace{1.2mm} 
B_2(z)=(1-z), \hspace{1.2mm} C_2(z)=-(\alpha+1). 
\end{align}
And it turns out that the operators $L_1$ and $L_2$ shift the parameters of $P_n(z)$ as follows: 
\begin{align}
\label{opL1_P}
&L_1P_n(z)=-n(n+\alpha+1)P_n(z; \alpha+1, \beta-1), \\
\label{opL2_P}
&L_2P_n(z)=-(n+\alpha+1)P_n(z; \alpha+1, \beta-1). 
\end{align}

The biorthogonality relation on the unit circle satisfied by HR polynomials is: 
\begin{equation}
\label{Biorth_HR}
\langle w(z)P_n(z), Q_m(z) \rangle
=\dfrac{1}{2\pi}\int_{0}^{2\pi}P_n(e^{ix})Q_m(e^{-ix})w(e^{ix})dx
=h_n\delta_{mn}, 
\end{equation}
where  
\begin{equation}
\label{w_HR}
w(z)=(-z)^{-\beta}(1-z)^{\alpha+\beta}, \quad
h_n=\dfrac{(\alpha+n+1)_{\infty}(\beta+n+1)_{\infty}}{(n+1)_{\infty}(\alpha+\beta+n+1)_{\infty}},
\end{equation}
and the branch of $(-z)^{-\beta}$ and of $(1-z)^{\alpha+\beta}$ is chosen as follow: 
\begin{eqnarray*}
&&(-z)^{-\beta}=|z|^{-\beta} \text{ if arg}z=\pi (0<\text{arg}z<2\pi), \\
&&(1-z)^{\alpha+\beta}=|1-z|^{\alpha+\beta}\text{ if arg}(1-z)=0 (-\pi<\text{arg}(1-z)<\pi). 
\end{eqnarray*}
Alternatively, the norming constant $h_n$ can also be expressed in terms of Gamma function: 
\begin{align}
\label{h_n_Gamma}
h_n=\dfrac{(1)_n\Gamma(n+\alpha+\beta+1)}{\Gamma(n+\alpha+1)\Gamma(n+\beta+1)}. 
\end{align}


The positive-definiteness of the linear functional defined by (\ref{Biorth_HR}) is equivalent to $h_n>0, n=0,1,\ldots$, 
which can be guaranteed by a stronger condition: 
\begin{equation}
\label{cond_conv_w}
\alpha>-1, \quad \beta>-1, \quad \alpha+\beta>-1. 
\end{equation}

The moments related to the weight function $w(z)$ are
\[
c_n=\dfrac{1}{2\pi i}\int_{|z|=1}z^nw(z)\dfrac{dz}{z}
=\dfrac{\sin{(\beta\pi)}\Gamma(n-b)}{\sin{((\alpha+\beta)\pi)}\Gamma(-(\alpha+\beta))\Gamma(1+n+\alpha)}, \quad
n=0,1,2,\ldots
\]
under the condition $\Re{(\alpha+\beta)}>-1$, which is equivalent to $\alpha+\beta>-1$ since $\alpha, \beta$ are real. 

The coefficients of $L_1$ and the weight function $w(z)$ satisfy the following Pearson equation
\begin{equation}
\label{w_AB_r}
(A_1(z)w(z))'=B_1(z)w(z). 
\end{equation}

The authors have already provided several useful formulas in \cite{XHR}. 
Here, we introduce two additional formulas and consolidate them into the following lemma.
\begin{lemma}
\label{formula_HR}
The following relations are satisfied by the HR polynomials: 
\begin{align}
\label{formula_HR_1}
&z^nP_n(z^{-1})=\dfrac{(\beta)_n}{(\alpha+1)_n}P_n(z; \beta-1,\alpha+1), \\
\label{formula_HR_2}
&P'_n(z)=nP_{n-1}(z;\alpha+1,\beta), \\
\label{formula_HR_3}
&\dfrac{P'_n(z;\beta-1,\alpha+1)}{P_n(z;\beta-1,\alpha+1)}
=\dfrac{n}{z}\left[1-\dfrac{(1+\alpha)}{(n-1+\beta)}\dfrac{P_{n-1}(z;\beta-1,\alpha+2)}{P_n(z;\beta-1,\alpha+1)}\right], \\
\label{formula_HR_4}
&\dfrac{P'_n(z; -\alpha-1,-\beta+1)}{P_n(z; -\alpha-1,-\beta+1)}
=\dfrac{n}{z}\left[1-\dfrac{P_{n-1}(z^{-1}; -\beta+1,-\alpha)}{zP_n(z^{-1}; -\beta,-\alpha)}\right], \\
&
\label{form_HR_0}
zP_n(z)=P_{n+1}(z)+(d_n-b_n)P_{n}(z; \alpha-1, \beta+1), \\
&
\label{form_HR_1}
(z-1)P'_n(z)=n\left(P_n(z)-\frac{n+\alpha+\beta}{n+\alpha}P_{n-1}(z)\right). 
\end{align}
\end{lemma}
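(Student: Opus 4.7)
The plan is to derive all six identities from the hypergeometric representation $P_n(z;\alpha,\beta)=\tfrac{(\beta)_n}{(\alpha+1)_n}\,{}_2F_1(-n,\alpha+1;1-\beta-n;z)$, using three standard tools: term-by-term differentiation of the terminating sum, reversal of summation $k\mapsto n-k$ for terminating ${}_2F_1$, and Gauss three-term contiguous relations.

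First I would prove (\ref{formula_HR_2}) by differentiating the series term by term and reindexing $k\mapsto k+1$; the Pochhammer ratio $(-n)_{k+1}(\alpha+1)_{k+1}/(1-\beta-n)_{k+1}$ collapses to exactly the form required by $nP_{n-1}(z;\alpha+1,\beta)$ after a short check. Identity (\ref{formula_HR_1}) then follows from applying $k\mapsto n-k$ to the finite sum $z^nP_n(z^{-1})$; the reversed series is again a terminating ${}_2F_1$, matching $P_n(z;\beta-1,\alpha+1)$ up to the explicit constant $(\beta)_n/(\alpha+1)_n$. With these two in hand, the quotient identities (\ref{formula_HR_3}) and (\ref{formula_HR_4}) become algebraic: applying (\ref{formula_HR_2}) with parameters $(\beta-1,\alpha+1)$ (respectively $(-\alpha-1,-\beta+1)$) converts the left-hand $P'_n$ into a shifted $P_{n-1}$, after which a single Gauss contiguous relation — or, for (\ref{formula_HR_4}), (\ref{formula_HR_1}) itself applied with twisted parameters — reorganises the resulting three-term relation into the displayed logarithmic-derivative shape.

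For the two new identities, (\ref{form_HR_1}) reduces quickly: substitute (\ref{formula_HR_2}) on the left-hand side and compare coefficients of $z^k$; after clearing the common factor $(\beta)_n/(\alpha+1)_n$, the coefficient identity is an elementary Pochhammer relation between $(-n)_k$ and $(-n)_{k+1}$. I expect (\ref{form_HR_0}) to be the main obstacle, because the shift $(\alpha,\beta)\mapsto(\alpha-1,\beta+1)$ is not among the canonical Askey-scheme shifts built into the earlier identities. The plan there is to rearrange (\ref{TTRR_HR}) into
\[
zP_n(z)-P_{n+1}(z)=d_nP_n(z)-zb_nP_{n-1}(z),
\]
then verify separately that $d_nP_n(z)-zb_nP_{n-1}(z)=(d_n-b_n)P_n(z;\alpha-1,\beta+1)$. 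Using the explicit formulas (\ref{TTRR_Coe_HR}) one finds $d_n-b_n=-\alpha\beta/[(n+\alpha)(n+\alpha+1)]$; the $z^k$-coefficient identity then reduces to a rational identity in $(n,\alpha,\beta,k)$ that is dispatched by direct Pochhammer algebra. If term-by-term comparison proves unwieldy, a fallback is to note that both sides are polynomials of degree at most $n+1$ and to pin them down via their leading coefficients together with the boundary value $P_n(0;\alpha,\beta)=(\beta)_n/(\alpha+1)_n$.
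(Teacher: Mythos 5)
Your plan is sound, and every step checks out: (\ref{formula_HR_2}) and (\ref{formula_HR_1}) follow exactly as you say from term-by-term differentiation and from the reversal $k\mapsto n-k$ together with $(a)_{n-j}=(-1)^j(a)_n/(1-a-n)_j$; the identity $d_n-b_n=-\alpha\beta/[(n+\alpha)(n+\alpha+1)]$ is correct; and the reduction of (\ref{form_HR_1}) via (\ref{formula_HR_2}) to the paper's (\ref{form_HR_2}) is an elementary coefficient check. Note, however, that the paper offers no proof of this lemma at all — the first four identities are imported from \cite{XHR} and the two new ones are merely asserted — so there is no ``paper route'' to match; your verification is a genuine addition. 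Two small remarks. First, for (\ref{form_HR_0}) there is a shortcut you may prefer to the coefficient comparison: subtract the instance of (\ref{twisted_HR_3}) for $P_{n+1}$ from (\ref{twisted_HR_2}) written with $(\alpha,\beta)$ replaced by $(\alpha-1,\beta+1)$; this yields $zP_n(z)-P_{n+1}(z)=\bigl(d_n^{\alpha-1,\beta+1}-b_{n+1}^{\alpha-1,\beta+1}\bigr)P_n(z;\alpha-1,\beta+1)$, and a one-line computation shows $d_n^{\alpha-1,\beta+1}-b_{n+1}^{\alpha-1,\beta+1}=d_n-b_n$. Second, for (\ref{formula_HR_3}) the relation you need after applying (\ref{formula_HR_2}) with parameters $(\beta-1,\alpha+1)$, namely $zP_{n-1}(z;\beta,\alpha+1)=P_n(z;\beta-1,\alpha+1)-\tfrac{1+\alpha}{n-1+\beta}P_{n-1}(z;\beta-1,\alpha+2)$, shifts the degree and two parameters simultaneously, so it is not literally one of the fifteen Gauss contiguous relations; it is a composite of them (or a direct terminating-series check), which is harmless but worth stating precisely. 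With those caveats, the argument is complete; (\ref{formula_HR_4}) is indeed just (\ref{formula_HR_3}) under $(\alpha,\beta)\mapsto(-\beta,-\alpha)$ combined with (\ref{formula_HR_1}), as you indicate.
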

By using (\ref{formula_HR_2}), the formula (\ref{form_HR_1}) can also be written as: 
\begin{align}
\label{form_HR_2}
(z-1)P_{n-1}(z;\alpha+1,\beta)=P_n(z)-\frac{n+\alpha+\beta}{n+\alpha}P_{n-1}(z). 
\end{align}
\begin{remark}
One can immediately find the ladder operators of HR polynomials by using the relations (\ref{formula_HR_2}) 
and (\ref{opL1_P}) as follows 
\begin{align*}
&
\text{Lowering operator: } \quad
\partial_z P_n(z)=nP_{n-1}(z; \alpha+1, \beta), \\
&
\text{Raising operator: } \quad
\left(A_1(z)\partial_z+B_1(z)
\right)P_n(z;\alpha+1, \beta)=-(n+\alpha+2)P_{n+1}(z;\alpha+1, \beta-1). 
\end{align*}

Moreover, it follows from (\ref{formula_HR_2}) that 
\begin{align}
\label{int_P}
\int P_{n}(z) dz=P_{n+1}(z;\alpha-1,\beta)/(n+1)+Const.
\end{align}
\end{remark}


We also found the following type of recurrence relations of HR polynomials, where the coefficients are polynomials in $z$ 
instead of constants. 
\begin{lemma}
\label{lem_expan_P_n}
The HR polynomials $P_n(z)$ satisfy the following recurrence relation: 
\begin{align}
\label{recurr_P}
P_{n+k+1}(z)=D_{k}(z)P_{n+1}(z)+B_{k}(z)P_{n}(z), 
\end{align}
where $D_{k}(z)$ and $B_{k}(z)$ are polynomials of degree $k$ which are defined by  
\begin{align}
\label{recurr_P_D}
&D_{k}(z)=(z-d_{n+k})D_{k-1}(z)+b_{n+k}zD_{k-2}(z), \\
\label{recurr_P_B}
&B_{k}(z)=(z-d_{n+k})B_{k-1}(z)+b_{n+k}zB_{k-2}(z), \quad k\geq 1,
\end{align}
where $D_{0}(z)=1$, $D_{i}(z)=0$ for $i<0$, 
and $B_{0}(z)=0$, $B_{-1}(z)=1$, $B_{j}(z)=0$ for $j<-1$. 
Moreover, one easily finds that $(b_{n+1}z)^{-1}B_{k}(z)$ is a monic polynomial, $k\geq 1$. 
\end{lemma}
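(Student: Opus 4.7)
The plan is to proceed by induction on $k$, using only the three-term recurrence relation (\ref{TTRR_HR}) rewritten in the form
\begin{equation*}
P_{m+1}(z) = (z - d_m) P_m(z) + b_m z\, P_{m-1}(z),
\end{equation*}
valid for every $m \geq 1$. The key insight is that if one can express two consecutive polynomials $P_{n+k}(z)$ and $P_{n+k-1}(z)$ as linear combinations of $P_{n+1}(z)$ and $P_n(z)$ with polynomial coefficients, then applying the three-term recurrence at level $m = n+k$ automatically yields the same kind of expression for $P_{n+k+1}(z)$, and the coefficient polynomials obey exactly the recursions (\ref{recurr_P_D})--(\ref{recurr_P_B}).

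First I would check the base cases. For $k=0$, the identity $P_{n+1}(z) = 1\cdot P_{n+1}(z) + 0\cdot P_n(z)$ matches $D_0(z) = 1$, $B_0(z) = 0$. For the ``$k = -1$'' case used in the induction, $P_n(z) = 0\cdot P_{n+1}(z) + 1\cdot P_n(z)$ matches $D_{-1}(z) = 0$, $B_{-1}(z) = 1$. Then, assuming the representation (\ref{recurr_P}) holds with the indices $k-1$ and $k-2$, I would substitute into the shifted recurrence
\begin{equation*}
P_{n+k+1}(z) = (z - d_{n+k}) P_{n+k}(z) + b_{n+k} z\, P_{n+k-1}(z),
\end{equation*}
collect the coefficients of $P_{n+1}(z)$ and $P_n(z)$, and read off that they are precisely
$(z-d_{n+k})D_{k-1}(z) + b_{n+k}z\,D_{k-2}(z)$ and $(z-d_{n+k})B_{k-1}(z) + b_{n+k}z\,B_{k-2}(z)$, matching (\ref{recurr_P_D})--(\ref{recurr_P_B}).

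A parallel induction establishes the degree count: since $D_{k-1}$ has degree $k-1$ and $D_{k-2}$ has degree $k-2$, the polynomial $(z-d_{n+k})D_{k-1}(z)$ contributes the dominant $z^k$ term while $b_{n+k}z\,D_{k-2}(z)$ has strictly smaller degree, so $\deg D_k = k$; the same argument works for $B_k$ starting from $B_1(z) = b_{n+1}z$ (which has degree $1$). For the final claim that $(b_{n+1}z)^{-1}B_k(z)$ is a polynomial and is monic, I would note that $B_0 = 0$ and $B_1 = b_{n+1}z$ are both divisible by $b_{n+1}z$, and divisibility propagates through the recursion (\ref{recurr_P_B}) since both terms on the right-hand side inherit the factor $b_{n+1}z$ from their $B_{k-1}$ or $B_{k-2}$ factors. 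The leading coefficient of $B_k$ equals that of $B_{k-1}$ (again because $b_{n+k}z\,B_{k-2}$ is lower order), so by induction it equals $b_{n+1}$, giving the monic normalization after dividing by $b_{n+1}z$.

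There is no genuine obstacle here: the result is essentially a bookkeeping consequence of iterating the classical three-term recurrence, and the only mild subtlety is keeping track of two simultaneous pieces of data ($D_k$ and $B_k$) and verifying that both the degree and divisibility properties are preserved under the two-step recursion. The coefficients $d_{n+k}, b_{n+k}$ depend on $n$, so the pair $(D_k, B_k)$ should be viewed as implicitly indexed by $n$; this indexing will matter later when the lemma is applied inside the proof of the main theorems, but plays no role in the proof of the lemma itself.
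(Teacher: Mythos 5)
Your proof is correct and follows essentially the same route as the paper: the paper simply packages your two-step induction on the three-term recurrence $P_{m+1}(z)=(z-d_m)P_m(z)+b_m z P_{m-1}(z)$ as a product of $2\times 2$ transfer matrices, which is the same bookkeeping. Your explicit verification of the degree count and of the divisibility of $B_k(z)$ by $b_{n+1}z$ (with monic quotient) is a welcome addition, since the paper leaves those as ``one easily finds.''
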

\begin{proof}
It follows from the recurrence relation (\ref{TTRR_HR}) that 
\[
P_{n+2}(z)=(z-d_{n+1})P_{n+1}(z)+b_{n+1}zP_{n}(z), 
\]
which can be rewritten as the following matrices multiplication: 
\begin{align*}
\begin{pmatrix}
P_{n+2}(z) \\
P_{n+1}(z)
\end{pmatrix}
=
\begin{pmatrix}
z-d_{n+1} & b_{n+1}z \\
1 & 0
\end{pmatrix}
\begin{pmatrix}
P_{n+1}(z) \\
P_{n}(z)
\end{pmatrix}.
\end{align*}
Inductively, we can conclude that for $k\geq 1$, 
\begin{align*}
\begin{pmatrix}
P_{n+k+1}(z) \\
P_{n+k}(z)
\end{pmatrix}
=
\begin{pmatrix}
z-d_{n+k} & b_{n+k}z \\
1 & 0
\end{pmatrix}
\cdots
\begin{pmatrix}
z-d_{n+1} & b_{n+1}z \\
1 & 0
\end{pmatrix}
\begin{pmatrix}
P_{n+1}(z) \\
P_{n}(z)
\end{pmatrix}. 
\end{align*}
If we write 
\begin{align*}
\begin{pmatrix}
P_{n+k+1}(z) \\
P_{n+k}(z)
\end{pmatrix}
=
\begin{pmatrix}
D_k(z) &B_k(z) \\
D_{k-1}(z) & B_{k-1}(z)
\end{pmatrix}
\begin{pmatrix}
P_{n+1}(z) \\
P_{n}(z)
\end{pmatrix}, 
\end{align*}
then the following relation leads to (\ref{recurr_P_D}) and (\ref{recurr_P_B}):
\begin{align*}
\begin{pmatrix}
D_{k}(z) &B_{k}(z) \\
D_{k-1}(z) & B_{k-1}(z)
\end{pmatrix}
=
\begin{pmatrix}
z-d_{n+k} & b_{n+k}z \\
1 & 0
\end{pmatrix}
\begin{pmatrix}
D_{k-1}(z) &B_{k-1}(z) \\
D_{k-2}(z) & B_{k-2}(z)
\end{pmatrix}. 
\end{align*}

\end{proof}
Here we list the first few terms of $D_k(z)$ and $B_k(z)$ for reference: 
\begin{align*}
&
D_{1}(z)=z-d_{n+1}, \quad B_{1}(z)=b_{n+1}z;  \\
&
D_{2}(z)=(z-d_{n+2})(z-d_{n+1})+b_{n+2}z, \quad B_{2}(z)=b_{n+1}z(z-d_{n+2}); \\
&
D_{3}(z)=(z-d_{n+3})[(z-d_{n+2})(z-d_{n+1})+b_{n+2}z]+b_{n+3}z(z-d_{n+1}), \\
&
B_{3}(z)=b_{n+1}z[(z-d_{n+3})(z-d_{n+2})+b_{n+3}z]; \quad \cdots
\end{align*}

\begin{cor}
\label{cor_expan_k}
For any monic polynomial $C_{k}(z)$ of degree $k$, $k\geq 0$, 
there exists a degree $k+1$ polynomial $Q_{k+1}(z)$, such that 
\begin{align}
\label{expan_k}
&
Q_{k+1}(z)P_{n+1}(z)+b_{n+1}(d_n-b_n)C_{k}(z)P_n(z;\alpha-1,\beta+1)  \\
&\quad
\in {\text{span}}\{P_{n+k+2}(z), \cdots, P_{n+2}(z)\}. \nonumber
\end{align}
\end{cor}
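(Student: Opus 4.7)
The plan is to reduce (\ref{expan_k}) to a linear-algebra problem in the basis $\{D_j(z)\}$ produced by Lemma \ref{lem_expan_P_n}, via two rewriting steps that eliminate both the parameter-shifted polynomial $P_n(z;\alpha-1,\beta+1)$ and the factor of $z$ in front of $P_n(z)$.

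First I would apply (\ref{form_HR_0}) in its rearranged form $(d_n-b_n)P_n(z;\alpha-1,\beta+1) = z P_n(z) - P_{n+1}(z)$, multiplying through by $b_{n+1} C_k(z)$ to turn the second summand in (\ref{expan_k}) into $C_k(z)\bigl(b_{n+1} z P_n(z) - b_{n+1} P_{n+1}(z)\bigr)$. Then the three-term recurrence, i.e.\ the $k=1$ case of Lemma \ref{lem_expan_P_n}, lets me replace $b_{n+1} z P_n(z)$ by $P_{n+2}(z) - (z - d_{n+1}) P_{n+1}(z)$. After collecting terms, the left-hand side of (\ref{expan_k}) takes the form
\begin{align*}
\widetilde Q(z) P_{n+1}(z) + C_k(z) P_{n+2}(z), \qquad
\widetilde Q(z) := Q_{k+1}(z) - C_k(z)(z - d_{n+1} + b_{n+1}).
\end{align*}

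The task thus becomes finding constants $\lambda_{n+2}, \ldots, \lambda_{n+k+2}$ and a polynomial $\widetilde Q$ making this combination equal to $\sum_{j=2}^{k+2} \lambda_{n+j} P_{n+j}(z)$. I would then reapply Lemma \ref{lem_expan_P_n}, but with the base index shifted from $n$ to $n+1$, so that each $P_{n+j}(z) = D_{j-2}(z)P_{n+2}(z) + B_{j-2}(z) P_{n+1}(z)$. Matching the coefficients of $P_{n+2}(z)$ and of $P_{n+1}(z)$ separately produces the pair of scalar equations
\begin{align*}
\sum_{j=2}^{k+2} \lambda_{n+j} D_{j-2}(z) = C_k(z), \qquad \sum_{j=2}^{k+2} \lambda_{n+j} B_{j-2}(z) = \widetilde Q(z).
\end{align*}

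Since (\ref{recurr_P_D}) shows inductively that $D_j$ is monic of degree exactly $j$, the set $\{D_0, D_1, \ldots, D_k\}$ is a basis for polynomials of degree $\leq k$; hence the $\lambda_{n+j}$ are uniquely determined by expanding the monic polynomial $C_k(z)$ in this basis (in particular $\lambda_{n+k+2} = 1$). The second equation then \emph{defines} $\widetilde Q(z)$, which automatically has degree at most $k$ because $\deg B_j \leq j$. Unwinding the substitution yields $Q_{k+1}(z) = \widetilde Q(z) + C_k(z)(z - d_{n+1} + b_{n+1})$, whose leading term $z \cdot z^k = z^{k+1}$ confirms it has degree exactly $k+1$. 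The argument is essentially bookkeeping once the two rewriting steps are in place; the main thing to watch is the index shift in the second invocation of Lemma \ref{lem_expan_P_n} and the monicity of the $D_j$'s, which together guarantee both the existence and the uniqueness of the basis expansion.
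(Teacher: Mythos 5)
Your proposal is correct and follows essentially the same route as the paper: both proofs first use (\ref{form_HR_0}) to replace $(d_n-b_n)P_n(z;\alpha-1,\beta+1)$ by $zP_n(z)-P_{n+1}(z)$ and then invoke the transfer-matrix structure of Lemma \ref{lem_expan_P_n}. Your expansion of $C_k(z)$ in the shifted basis $\{D_0,\ldots,D_k\}$ is equivalent to the paper's expansion of $b_{n+1}zC_k(z)$ in $\{B_1,\ldots,B_{k+1}\}$, since $B_{j+1}$ with base index $n$ equals $b_{n+1}z$ times $D_j$ with base index $n+1$.
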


\begin{proof}
First, let us introduce the following formula 
\begin{align}
\label{expan_P_(-1,1)}
&
(d_{n}-b_{n})P_{n}(z;\alpha-1,\beta+1)=zP_{n}(z)-P_{n+1}(z), 
\end{align}
which is a straightforward consequence of (\ref{form_HR_0}). 
Then we can rewrite the second term on the left-hand side of (\ref{expan_k}) as 
\[
b_{n+1}(d_n-b_n)C_{k}(z)P_n(z;\alpha-1,\beta+1)
=b_{n+1}C_{k}(z)(zP_{n}(z)-P_{n+1}(z)). 
\]
In view of lemma \ref{lem_expan_P_n}, we assume that 
\[
b_{n+1}zC_{k}(z)=B_{k+1}(z)+c_{1}B_{k}(z)+\cdots+c_{k}B_{1}(z), 
\]
hence 
\begin{align*}
b_{n+1}zC_{k}(z)P_{n}(z)
&
=(B_{k+1}(z)+c_{1}B_{k}(z)+\cdots+c_{k}B_{1}(z))P_{n}(z) \\
&
=P_{n+k+2}(z)+c_1P_{n+k+1}(z)+\cdots+c_{k}P_{n+2}(z) \\
&\quad
-(D_{k+1}(z)+c_{1}D_{k}(z)+\cdots+c_{k}D_{1}(z))P_{n+1}(z). 
\end{align*}
Let $Q_{k+1}(z)$ be defined by 
\[
Q_{k+1}(z)=(D_{k+1}(z)+c_{1}D_{k}(z)+\cdots+c_{k}D_{1}(z))+b_{n+1}C_{k}(z)
\]
then the proof of (\ref{expan_k}) is completed. 

\end{proof}

\subsection{Expansion formulas related to HR polynomials}
From the recurrence relation (\ref{TTRR_HR}) one can deduce several expansion formulas related to 
HR polynomials and the ones with twisted parameters. 
First, by induction on (\ref{TTRR_HR}) one can obtain the following expansion formula. 
\begin{lemma}
\label{zHR}
For $n\geq 0$, the HR polynomials $P_n(z)$ satisfy the following formula: 
\begin{align}
\label{zHR}
&zP_{n}(z)=P_{n+1}(z)+\sum^{n}_{j=0}(-1)^{n-j}\prod^{n-j-1}_{l=0}b_{n-l}(d_j-b_j)P_{j}(z), 
\end{align}
where the coefficients $b_n$ and $d_n$ are given by (\ref{TTRR_Coe_HR}). 
\end{lemma}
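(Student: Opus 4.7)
The plan is to prove the identity by induction on $n$, using the three-term recurrence relation \eqref{TTRR_HR} rewritten as
\begin{align*}
zP_n(z) = P_{n+1}(z) + d_n P_n(z) - b_n z P_{n-1}(z),
\end{align*}
which is the only structural input we need. The key observation is that each application of this identity converts one factor of $z$ on the left into a $P_{n+1}$ term plus a constant multiple of $P_n$, at the cost of introducing a new $zP_{n-1}$ term, which is then unfolded by the inductive hypothesis.

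For the base case $n=0$, since $b_0 = 0$ by \eqref{TTRR_Coe_HR}, the claim reduces to $zP_0(z) = P_1(z) + (d_0-b_0)P_0(z)$, which is exactly \eqref{TTRR_HR} at $n=0$ (interpreting $P_{-1}(z) = 0$, with the empty product equal to $1$). For the inductive step, assume
\begin{align*}
zP_{n-1}(z) = P_n(z) + \sum_{j=0}^{n-1}(-1)^{n-1-j}\prod_{l=0}^{n-2-j}b_{n-1-l}\,(d_j-b_j)P_j(z).
\end{align*}
I would substitute this expression for $zP_{n-1}(z)$ into the recurrence $zP_n = P_{n+1} + d_nP_n - b_n(zP_{n-1})$ and collect terms. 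The $P_{n+1}$ and the new $(d_n-b_n)P_n$ contributions account for the $j=n$ summand of the target formula, while the remaining sum acquires an extra factor of $-b_n$, multiplying each coefficient $\prod_{l=0}^{n-2-j}b_{n-1-l}$.

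The one step that needs a careful bookkeeping check is verifying that this multiplication by $-b_n$ reproduces exactly the coefficient $(-1)^{n-j}\prod_{l=0}^{n-1-j}b_{n-l}$ appearing in the claim for $n$. This is the index shift $l \mapsto l+1$: one checks that
\begin{align*}
b_n \prod_{l=0}^{n-2-j} b_{n-1-l} = \prod_{l=0}^{n-1-j} b_{n-l},
\end{align*}
and the sign flips from $(-1)^{n-1-j}$ to $(-1)^{n-j}$. Other than this re-indexing, the proof is a routine unwinding of the recurrence, so I expect no real obstacle; the main care is just in aligning indices so that the $j=n$ contribution emerges naturally from $(d_n-b_n)P_n$ with the empty product convention.
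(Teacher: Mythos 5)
Your proof is correct and matches the paper's approach: the paper simply asserts that the formula follows ``by induction on (\ref{TTRR_HR})'', and your argument is precisely that induction, carried out with the right empty-product convention at $j=n$ and the correct re-indexing $b_n\prod_{l=0}^{n-2-j}b_{n-1-l}=\prod_{l=0}^{n-1-j}b_{n-l}$. Nothing further is needed.
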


One can also find the following connection between (\ref{TTRR_HR}) 
and the twisted HR polynomials $P_{n}(z;\alpha+1,\beta-1)$ through straightforward calculations using (\ref{HR_P}). 
\begin{lemma}
\label{twisted_HR}
The twisted HR polynomials $P_{}(z;\alpha+1,\beta-1)$ can be expanded in terms of HR polynomials as follows: 
\begin{align}
\label{twisted_HR_1}
&P_{n}(z;\alpha+1,\beta-1)=P_n(z)+b_nP_{n-1}(z), \\
\label{twisted_HR_2}
&zP_{n}(z;\alpha+1,\beta-1)=P_{n+1}(z)+d_nP_{n}(z). 
\end{align}
\end{lemma}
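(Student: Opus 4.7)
The plan is to prove (\ref{twisted_HR_1}) first and then derive (\ref{twisted_HR_2}) as a one-line corollary: once (\ref{twisted_HR_1}) is established, multiplying both sides by $z$ and invoking the three-term recurrence (\ref{TTRR_HR}) gives
\[
z P_{n}(z;\alpha+1,\beta-1) = z\bigl(P_n(z) + b_n P_{n-1}(z)\bigr) = P_{n+1}(z) + d_n P_n(z),
\]
which is exactly (\ref{twisted_HR_2}). So the entire task reduces to the first identity.

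For (\ref{twisted_HR_1}), the cleanest route is to combine two facts already on the table. From (\ref{opL2_P}), the operator $L_2 = (1-z)\partial_z - (\alpha+1)I$ acts by
\[
L_2 P_n(z) = (1-z) P_n'(z) - (\alpha+1) P_n(z) = -(n+\alpha+1)\, P_n(z;\alpha+1,\beta-1),
\]
while (\ref{form_HR_1}) rewrites the derivative piece as $(1-z) P_n'(z) = -n P_n(z) + \tfrac{n(n+\alpha+\beta)}{n+\alpha} P_{n-1}(z)$. Substituting, collecting $P_n(z)$ coefficients, and dividing by $-(n+\alpha+1)$ yields (\ref{twisted_HR_1}) with the coefficient of $P_{n-1}(z)$ equal to $-\tfrac{n(n+\alpha+\beta)}{(n+\alpha)(n+\alpha+1)}$, which matches $b_n$ as given in (\ref{TTRR_Coe_HR}).

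Alternatively, following the advertised ``straightforward calculation using (\ref{HR_P})'', I would compare coefficients of $z^k$ directly from the hypergeometric series. After factoring out the common quantity $\tfrac{(\beta)_n}{(\alpha+1)_n}\cdot\tfrac{(-n)_k(\alpha+1)_k}{(1-\beta-n)_k\,k!}$ using the telescoping Pochhammer ratios $(\beta-1)_n/(\beta)_n=(\beta-1)/(\beta+n-1)$ and $(\alpha+2)_n/(\alpha+1)_n=(\alpha+n+1)/(\alpha+1)$ together with their counterparts at the index $k$, the claim reduces to the linear-in-$k$ polynomial identity
\[
(n+\alpha+1)(k-\beta-n+1) + (n+\alpha+\beta)(n-k) = -(\beta-1)(\alpha+k+1),
\]
checked by matching constant and linear terms in $k$. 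The main obstacle is purely bookkeeping: the boundary $k=n$, where $P_{n-1}(z)$ contributes no $z^n$ term, is automatically absorbed by the factor $(n-k)$, and the argument rests on the generic non-vanishing of the Pochhammer denominators already assumed throughout. No genuinely hard step arises — the content of (\ref{twisted_HR_1}) is fully encoded in (\ref{opL2_P}) and (\ref{form_HR_1}).
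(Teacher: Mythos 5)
Your proposal is correct. The paper itself offers no written proof of this lemma --- it only remarks that the identities follow ``through straightforward calculations using (\ref{HR_P})'', i.e.\ the direct hypergeometric coefficient comparison, which is exactly your alternative route; I checked your reduced identity $(n+\alpha+1)(k-\beta-n+1)+(n+\alpha+\beta)(n-k)=-(\beta-1)(\alpha+k+1)$ and it holds (both sides have $k$-coefficient $1-\beta$ and constant term $(\alpha+1)(1-\beta)$), and the $k=n$ boundary is indeed absorbed by the vanishing factor. Your primary route is genuinely different and arguably cleaner: writing $L_2P_n=(1-z)P_n'-(\alpha+1)P_n$, substituting $(1-z)P_n'=-nP_n+\tfrac{n(n+\alpha+\beta)}{n+\alpha}P_{n-1}$ from (\ref{form_HR_1}), and comparing with (\ref{opL2_P}) gives (\ref{twisted_HR_1}) with coefficient $-\tfrac{n(n+\alpha+\beta)}{(n+\alpha)(n+\alpha+1)}=b_n$ as in (\ref{TTRR_Coe_HR}), after which (\ref{twisted_HR_2}) is immediate from the three-term recurrence (\ref{TTRR_HR}). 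The operator route buys a conceptual explanation of why the parameter shift $(\alpha,\beta)\mapsto(\alpha+1,\beta-1)$ produces a two-term combination (it is the action of $L_2$), at the cost of leaning on the unproved-but-stated facts (\ref{opL2_P}) and (\ref{form_HR_1}); the series route is self-contained from the definition but is pure bookkeeping. Since neither (\ref{opL2_P}) nor (\ref{form_HR_1}) is derived from the present lemma anywhere in the paper, there is no circularity, and both of your arguments are valid.
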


Moreover, by replacing $\alpha$ and $\beta$ to $\alpha-1$ and $\beta+1$ in both sides of (\ref{twisted_HR_1}) 
we obtain 
\begin{align}
\label{twisted_HR_3}
&P_{n}(z)=P_n(z;\alpha-1,\beta+1)+b_n^{\alpha-1,\beta+1}P_{n-1}(z;\alpha-1,\beta+1),  
\end{align}
thus it follows from (\ref{HR_Q}) that
\begin{align}
\label{twisted_HR_Q_1}
Q_{n}(z)=Q_n(z;\alpha+1,\beta-1)+b_{n}^{\beta-1,\alpha+1}Q_{n-1}(z;\alpha+1,\beta-1), 
\end{align}
where we replaced $b_{n}(\beta-1,\alpha+1)$ to the new notation $b_{n}^{\beta-1,\alpha+1}$ for compactness of formulas. 
In fact, the three-term recurrence relation (\ref{TTRR_HR}) can also be derived by using (\ref{twisted_HR_Q_1}) 
and (\ref{twisted_HR_1}) in the following fashion: 
\begin{align*}
&
\langle w(z)z(P_n(z)+b_nP_{n-1}(z)), Q_m(z) \rangle
=\langle P_n(z)+b_nP_{n-1}(z), z^{-1}w(z^{-1})Q_m(z) \rangle \\
&
=\langle P_{n}(z;\alpha+1,\beta-1), 
-w(z^{-1};\alpha+1,\beta-1)(Q_m(z;\alpha+1,\beta-1)+b_{m}^{\beta-1,\alpha+1}Q_{m-1}(z;\alpha+1,\beta-1)) \rangle  \\
&
=0, \text{ if }m\leq n-1, 
\end{align*}
where the relation $z^{-1}w(z^{-1})=-w(z^{-1};\alpha+1,\beta-1)$ was used. 

By iterating (\ref{twisted_HR_3}) one can obtain the following formula: 
\begin{align}
\label{P(z;a-1,b+1)}
P_n(z;\alpha-1,\beta+1)=P_{n}(z)+\sum^{n}_{j=0}(-1)^{n-j}\prod^{n-j-1}_{l=0}b_{n-l}^{\alpha-1,\beta+1}P_{j}(z). 
\end{align}
Then it follows from (\ref{zHR}) and (\ref{P(z;a-1,b+1)}) that 
\[
zP_{n}(z;\alpha-1,\beta+1)=P_{n+1}(z)+\sum^{n}_{j=0}(-1)^{n-j}\prod^{n-j-1}_{l=0}b_{n-l}^{\alpha-1,\beta+1}
\left(\sum^{n-j}_{i=0}d_{n-i}^{\alpha-1,\beta+1}-\sum^{n+1-j}_{k=0}b_{n+1-k}^{\alpha-1,\beta+1}
\right)P_j(z). 
\]

Furthermore, by iterating (\ref{twisted_HR_1}) and (\ref{twisted_HR_Q_1}) 
one can obtain the following expansions.
\begin{lemma}
\label{lem_expan_PQ}
For $1\leq j\leq n$, it holds that 
\begin{align}
\label{twisted_HR_P_j}
&
P_n(z;\alpha+j,\beta-j)=P_n(z)+\sum^{j}_{l=1}C_{n,j}^{(l)}P_{n-l}(z), 
\\
\label{twisted_HR_Q_j}
&
Q_{n}(z)=Q_n(z;\alpha+j,\beta-j)+\sum^{j}_{l=1}E_{n,j}^{(l)}Q_{n-l}(z;\alpha+j,\beta-j), 
\end{align}
where the coefficients $C_{n,j}^{(l)}, E_{n,j}^{(l)}$, $l=1,\ldots,j$, are given by 
\begin{align}
\label{twisted_HR_P_j_coe}
&
C_{n,j}^{(1)}=\sum^{j-1}_{k=0}b_{n}^{\alpha+k,\beta-k}, \quad 
C_{n,j}^{(j)}=\prod^{j-1}_{k=0}b_{n-k}^{\alpha+j-1-k,\beta-j+1+k}, \\
\label{twisted_HR_Q_j_coe}
&
E_{n,j}^{(1)}=\sum^{j}_{k=1}b_{n}^{\beta-k,\alpha+k}, \quad 
E_{n,j}^{(j)}=\prod^{j}_{k=1}b_{n-k+1}^{\beta-k,\alpha+k}, 
\end{align}
and 
\begin{align}
\label{twisted_HR_P_j_coe2}
&
C_{n,j}^{(l)}=C_{n,j-1}^{(l)}+b_{n}^{\alpha+j-1,\beta-j+1}C_{n-1,j-1}^{(l-1)}, \quad l=2,\ldots, j-1, \\
\label{twisted_HR_Q_j_coe2}
&
E_{n,j}^{(l)}=E_{n,j-1}^{(l)}+b_{n-1}^{\beta-j,\alpha+j}E_{n,j-1}^{(l-1)}, \quad l=2,\ldots, j-1. 
\end{align}
\end{lemma}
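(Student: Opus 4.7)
The plan is to prove both expansions by induction on $j$, with (\ref{twisted_HR_1}) and (\ref{twisted_HR_Q_1}) serving as the base cases at $j=1$. For (\ref{twisted_HR_P_j}), the inductive step begins by applying (\ref{twisted_HR_1}) with $(\alpha,\beta)$ replaced by $(\alpha+j-1,\beta-j+1)$, which yields
\[
P_n(z;\alpha+j,\beta-j) = P_n(z;\alpha+j-1,\beta-j+1) + b_n^{\alpha+j-1,\beta-j+1}\, P_{n-1}(z;\alpha+j-1,\beta-j+1).
\]
I then substitute the induction hypothesis into each term on the right, re-index the shifted sum by $l \mapsto l+1$ in the contribution coming from $P_{n-1}(z;\alpha+j-1,\beta-j+1)$, and collect the coefficients of $P_{n-l}(z)$ for $l = 0, \ldots, j$. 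The recurrence (\ref{twisted_HR_P_j_coe2}) for $2 \le l \le j-1$ emerges directly; the boundary formulas $C_{n,j}^{(1)}$ and $C_{n,j}^{(j)}$ in (\ref{twisted_HR_P_j_coe}) follow by unwinding the same recursion back to the seed $C_{n,1}^{(1)} = b_n^{\alpha,\beta}$, producing a telescoping sum on the $l=1$ end and a nested product on the $l=j$ end.

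For (\ref{twisted_HR_Q_j}), I first observe that via the symmetry $Q_n(z;\alpha,\beta) = P_n(z;\beta,\alpha)$ the identity (\ref{twisted_HR_Q_1}) is equivalent to (\ref{twisted_HR_3}) with $\alpha \leftrightarrow \beta$. Applying this one-step identity with parameters shifted by $j-1$ gives
\[
Q_n(z;\alpha+j-1,\beta-j+1) = Q_n(z;\alpha+j,\beta-j) + b_n^{\beta-j,\alpha+j}\, Q_{n-1}(z;\alpha+j,\beta-j),
\]
and similarly with $n$ replaced by $n-l$. Substituting these into the induction hypothesis, which already expresses $Q_n(z)$ in the $(j{-}1)$-twisted $Q$ basis, and collecting the coefficients of $Q_{n-l}(z;\alpha+j,\beta-j)$ produces the claimed expansion (\ref{twisted_HR_Q_j}) together with the recurrence (\ref{twisted_HR_Q_j_coe2}) and the boundary formulas (\ref{twisted_HR_Q_j_coe}).

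The main obstacle is purely clerical: one must carry the compound superscripts on $b_{\cdot}^{\cdot,\cdot}$ correctly through the twist shift and the re-indexing, and verify that the telescoping expressions $C_{n,j}^{(1)} = \sum_{k=0}^{j-1} b_n^{\alpha+k,\beta-k}$ and $C_{n,j}^{(j)} = \prod_{k=0}^{j-1} b_{n-k}^{\alpha+j-1-k,\beta-j+1+k}$ (and their $E$ counterparts) are exactly what one obtains by unwinding the $l=1$ and $l=j$ recursions back to the $j=1$ seed. Beyond this bookkeeping, no further property of the HR polynomials is required once the two base identities (\ref{twisted_HR_1}) and (\ref{twisted_HR_3}) are in hand; the entire lemma is essentially a formal iteration of a single two-term parameter-shift identity.
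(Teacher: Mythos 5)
Your proposal is exactly the paper's (unwritten) argument: the paper states this lemma with only the remark that it follows ``by iterating (\ref{twisted_HR_1}) and (\ref{twisted_HR_Q_1})'', and your induction on $j$ with those one-step parameter shifts as base case and inductive step is that iteration made explicit. One caveat: when you actually collect coefficients in the $Q$-expansion, the one-step identity applied at index $n-l$ contributes the factor $b_{n-l}^{\beta-j,\alpha+j}$ to the term $Q_{n-l-1}(z;\alpha+j,\beta-j)$, so the recurrence that emerges for the middle coefficients is
\begin{align*}
E_{n,j}^{(l)}=E_{n,j-1}^{(l)}+b_{n-l+1}^{\beta-j,\alpha+j}E_{n,j-1}^{(l-1)},
\end{align*}
which agrees with the printed (\ref{twisted_HR_Q_j_coe2}) only for $l=2$; for $l\geq 3$ (first relevant at $j=4$) the subscript $n-1$ in (\ref{twisted_HR_Q_j_coe2}) appears to be a typo for $n-l+1$, so your claim that the computation ``produces (\ref{twisted_HR_Q_j_coe2})'' verbatim cannot be literally right. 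The expansions (\ref{twisted_HR_P_j})--(\ref{twisted_HR_Q_j}), the boundary formulas (\ref{twisted_HR_P_j_coe})--(\ref{twisted_HR_Q_j_coe}), and the $C$-recurrence (\ref{twisted_HR_P_j_coe2}) all check out exactly as you describe.
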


Using these expansions one can prove the following lemma. 
\begin{lemma}
\label{lem_zp_expan}
There exist constants $a_{n,l_0}^{(0)}, \ldots, a_{n,l_0}^{(l_0+1)}$, such that for any positive integer $j$, 
$1\leq j\leq l_0+1$, it holds that 
\begin{align}
\label{p_span}
&
z^{j}\sum^{l_0+1}_{l=0}a_{n,l_0}^{(l)}P_{n-l}(z)
\in\textup{span}\{P_{n+j}(z), \cdots, P_{n+j-l_0-1}(z)\}, 
\end{align}
where $1\leq l_0\leq n-1$. 
Moreover, if $a_{n,l_0}^{(0)}=1$, then 
\begin{align}
\label{Coe_a}
a_{n,l_0}^{(l)}=C_{n,l_0+1}^{(l)}, \quad l=1, \ldots, l_0+1, 
\end{align}
where $C_{n,l_0+1}^{(l)}$, $ l=1, \ldots, l_0+1$, are defined by (\ref{twisted_HR_P_j_coe}) and (\ref{twisted_HR_P_j_coe2}). 
\end{lemma}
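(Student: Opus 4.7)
The strategy is to identify the combination $\sum_{l=0}^{l_0+1} a_{n,l_0}^{(l)} P_{n-l}(z)$ with a twisted HR polynomial and then repeatedly apply the shift relation (\ref{twisted_HR_2}). Setting $a_{n,l_0}^{(0)}=1$ and $a_{n,l_0}^{(l)}=C_{n,l_0+1}^{(l)}$ for $l=1,\ldots,l_0+1$, formula (\ref{twisted_HR_P_j}) taken at $j=l_0+1$ gives
\[
\sum_{l=0}^{l_0+1}a_{n,l_0}^{(l)}P_{n-l}(z)=P_n(z;\alpha+l_0+1,\beta-l_0-1),
\]
which automatically yields the coefficient identity (\ref{Coe_a}). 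It therefore suffices to show that $z^j P_n(z;\alpha+l_0+1,\beta-l_0-1)$ belongs to $\textup{span}\{P_{n+j}(z),\ldots,P_{n+j-l_0-1}(z)\}$ for every $1\leq j\leq l_0+1$.

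The key tool is the shifted form of (\ref{twisted_HR_2}), obtained by replacing $(\alpha,\beta)$ with $(\alpha+k-1,\beta-k+1)$:
\[
z P_m(z;\alpha+k,\beta-k)=P_{m+1}(z;\alpha+k-1,\beta-k+1)+d_m^{\alpha+k-1,\beta-k+1}P_m(z;\alpha+k-1,\beta-k+1).
\]
Each multiplication by $z$ decreases the twist parameter by one while spreading support across two consecutive degrees. A straightforward induction on $j$ (starting from twist $l_0+1$ and ending at twist $l_0+1-j$) then yields
\[
z^j P_n(z;\alpha+l_0+1,\beta-l_0-1)\in\textup{span}\bigl\{P_{n+i}(z;\alpha+l_0+1-j,\beta-l_0-1+j)\colon 0\leq i\leq j\bigr\}.
\]

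The last step is to expand each twisted polynomial on the right back into untwisted HR polynomials via (\ref{twisted_HR_P_j}) with twist level $l_0+1-j$: the polynomial $P_{n+i}(z;\alpha+l_0+1-j,\beta-l_0-1+j)$ is a linear combination of $P_{n+i}(z),P_{n+i-1}(z),\ldots,P_{n+i-(l_0+1-j)}(z)$. Letting $i$ range over $0,\ldots,j$, the top degree $n+j$ is attained at $i=j$ while the bottom degree $n+j-l_0-1$ is attained at $i=0$, giving precisely the span claimed in the lemma.

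No serious obstacle is anticipated: the whole argument is careful bookkeeping of degree ranges using the two previously established expansions (\ref{twisted_HR_2}) and (\ref{twisted_HR_P_j}). The one place requiring attention is the inductive step, where one must verify that the two shifts produced by each application of $z$ (namely, indices $n+i$ and $n+i+1$ at the lowered twist level) align correctly so that the support after $j$ steps is exactly $\{n,n+1,\ldots,n+j\}$; this reduces to the elementary observation that the union of $\{i\}_{i=0}^{j}$ and $\{i+1\}_{i=0}^{j}$ is $\{0,1,\ldots,j+1\}$.
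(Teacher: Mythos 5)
Your argument is correct and lands on the same coefficient identification as the paper — namely that with $a_{n,l_0}^{(0)}=1$ the combination $\sum_{l}a_{n,l_0}^{(l)}P_{n-l}(z)$ equals $P_n(z;\alpha+l_0+1,\beta-l_0-1)$ by (\ref{twisted_HR_P_j}), whence (\ref{Coe_a}) — but it gets to the span statement (\ref{p_span}) by a genuinely different mechanism. The paper argues dually: it pairs $z^{j}\sum_l a_{n,l_0}^{(l)}P_{n-l}(z)$ against $Q_m(z)$ under the biorthogonality pairing, transfers $z^{-j}w(z^{-1})$ onto $Q_m$ via the identity $w(z^{-1};\alpha+j,\beta-j)=(-1)^{j}z^{-j}w(z^{-1})$ together with the $Q$-expansion (\ref{twisted_HR_Q_j}) (this is (\ref{zjQ})), and concludes that the pairing vanishes for $m\le n+j-l_0-2$; combined with the degree bound $\deg=n+j$ this yields (\ref{p_span}). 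You instead stay entirely on the $P$-side: you iterate the parameter-shifted form of (\ref{twisted_HR_2}) $j$ times, lowering the twist from $l_0+1$ to $l_0+1-j$ while spreading the degree support over $\{n,\dots,n+j\}$, and then expand back into untwisted $P$'s with (\ref{twisted_HR_P_j}). Your route is more elementary and constructive — it needs no inner product, no weight-function identity, and none of the $Q$-side expansions of Lemma \ref{lem_expan_PQ} — and it makes the degree bookkeeping fully explicit; the paper's route is shorter given that the dual machinery is already set up and is the template reused immediately afterward in the proof of (\ref{expre_Coe_a_12}). One hypothesis worth flagging explicitly in your write-up: the identification at twist level $l_0+1$ requires $l_0+1\le n$, i.e.\ exactly the stated condition $l_0\le n-1$, and the back-expansion at twist level $l_0+1-j$ of $P_{n+i}$ requires $l_0+1-j\le n+i$, which also follows from it. Note also that, like the paper, you establish that the coefficients $C_{n,l_0+1}^{(l)}$ \emph{work} rather than that they are forced by $a_{n,l_0}^{(0)}=1$; this matches the level at which the paper itself treats the ``moreover'' clause.
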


\begin{proof}
Firstly, for arbitrary constants $a_{n,l_0}^{(0)}, \ldots, a_{n,l_0}^{(l_0+1)}$ we have 
\begin{align*}
&
\langle w(z)z^{j}\sum^{l_0+1}_{l=0}a_{n,l_0}^{(l)}P_{n-l}(z), Q_m(z) \rangle
=\langle \sum^{l_0+1}_{l=0}a_{n,l_0}^{(l)}P_{n-l}(z), z^{-j}w(z^{-1})Q_m(z) \rangle. 
\end{align*}
By using the relation $w(z^{-1};\alpha+j,\beta-j)=(-1)^{j}z^{-j}w(z^{-1})$ and (\ref{twisted_HR_Q_j}) we have 
\begin{align}
\label{zjQ}
&
z^{-j}w(z^{-1})Q_n(z) \\
&
=(-1)^{j}w(z^{-1};\alpha+j,\beta-j)
(Q_n(z;\alpha+j,\beta-j)+\sum^{j}_{l=1}E_{n,j}^{(l)}Q_{n-l}(z;\alpha+j,\beta-j)).  \nonumber
\end{align}
Then according to (\ref{zjQ}), the relation (\ref{p_span}) holds if 
\begin{align*}
\sum^{l_0+1}_{l=0}a_{n,l_0}^{(l)}P_{n-l}(z)
\in\textup{span}\{P_{n}(z;\alpha+j,\beta-j), \cdots, P_{n+j-l_0-1}(z;\alpha+j,\beta-j)\} 
\end{align*}
is satisfied for $1\leq j\leq l_0+1$. This is because the above implies that 
\[
\langle w(z)z^{j}\sum^{l_0+1}_{l=0}a_{n,l_0}^{(l)}P_{n-l}(z), Q_m(z) \rangle=0, \quad m\leq n+j-l_0-2. 
\]
If $a_{n,l_0}^{(0)}=1$, then the coefficients $a_{n,l_0}^{(1)}, \ldots, a_{n,l_0}^{(l_0+1)}$ can be obtained 
from the equation
\begin{align}
\label{sol_a_eq}
\sum^{l_0+1}_{l=0}a_{n,l_0}^{(l)}P_{n-l}(z)=P_{n}(z;\alpha+l_0+1,\beta-l_0-1). 
\end{align}
Finally, it follows from (\ref{twisted_HR_P_j}) that (\ref{Coe_a}) was proved. 
\end{proof}

The following corollary is an immediate consequence of Lemma \ref{lem_zp_expan}. 
\begin{cor}
\label{cor_q_span}
Given any polynomial $q(z)$ of degree $l_0+1$ and $z\mid q(z)$, it holds that 
\begin{align}
\label{q_span}
&
q(z)\sum^{l_0+1}_{l=0}a_{n,l_0}^{(l)}P_{n-l}(z)
\in\textup{span}\{P_{n+l_0+1}(z), \cdots, P_{n-l_0}(z)\}, 
\end{align}
where $a_{n,l_0}^{(0)}=1$, and $a_{n,l_0}^{(1)}, \ldots, a_{n,l_0}^{(l_0+1)}$ are defined by (\ref{Coe_a}).
\end{cor}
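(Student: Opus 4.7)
The plan is to decompose $q(z)$ according to the hypothesis $z\mid q(z)$ and $\deg q = l_0+1$, then invoke Lemma \ref{lem_zp_expan} monomial-by-monomial and union the resulting spans. Since $z$ divides $q(z)$ and $\deg q(z)=l_0+1$, I can write
\[
q(z)=\sum^{l_0+1}_{j=1}c_{j}\,z^{j}
\]
for some constants $c_{j}$. By linearity,
\[
q(z)\sum^{l_0+1}_{l=0}a_{n,l_0}^{(l)}P_{n-l}(z)
=\sum^{l_0+1}_{j=1}c_{j}\left(z^{j}\sum^{l_0+1}_{l=0}a_{n,l_0}^{(l)}P_{n-l}(z)\right).
\]

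Next, I would apply Lemma \ref{lem_zp_expan} to each inner expression: for each $j$ with $1\leq j\leq l_0+1$,
\[
z^{j}\sum^{l_0+1}_{l=0}a_{n,l_0}^{(l)}P_{n-l}(z)\in\textup{span}\{P_{n+j}(z),\ldots,P_{n+j-l_0-1}(z)\}.
\]
The largest index that appears across $j=1,\ldots,l_0+1$ is $n+l_0+1$ (attained when $j=l_0+1$), and the smallest is $n+1-(l_0+1)=n-l_0$ (attained when $j=1$). Since these spans are nested in a staircase pattern and all lie inside $\textup{span}\{P_{n+l_0+1}(z),\ldots,P_{n-l_0}(z)\}$, the conclusion follows immediately by taking the $c_{j}$-linear combination.

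There is essentially no obstacle here: the statement is a routine linearity packaging of Lemma \ref{lem_zp_expan}. The only thing that requires a bit of care is the bookkeeping of indices, namely checking that the union of the spans $\{P_{n+j},\ldots,P_{n+j-l_0-1}\}$ over $j=1,\ldots,l_0+1$ is exactly $\{P_{n+l_0+1},\ldots,P_{n-l_0}\}$; this is what makes the statement tight and explains why the hypotheses $\deg q=l_0+1$ and $z\mid q(z)$ are the right ones (dropping either would either enlarge or shift the resulting span). No new analytic input beyond the previous lemma is needed.
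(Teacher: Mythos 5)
Your proposal is correct and matches the paper's intent exactly: the paper gives no separate proof, stating only that the corollary is an immediate consequence of Lemma \ref{lem_zp_expan}, and your monomial decomposition $q(z)=\sum_{j=1}^{l_0+1}c_j z^j$ together with the index bookkeeping (the union of the spans for $j=1,\ldots,l_0+1$ being $\{P_{n+l_0+1}(z),\ldots,P_{n-l_0}(z)\}$) is precisely the routine linearity argument being invoked.
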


%

\section{Exceptional Hendriksen-van Rossum polynomials}
In our previous work \cite{XHR}, we constructed four types of exceptional HR polynomials using 
a single-step Darboux transformation of a GEVP associated with differential operators. 
Here, we briefly review these results. 
According to the results revisited in Section 2, 
by choosing the eigenfunction $\psi(z)$ as an HR polynomial $P_n(z)$, 
the seed function $\phi(z)$ as a quasi-Laurent-polynomial eigenfunction, 
and the decoupling factor $\epsilon(z)$ as follows
i.e., 
\[
\psi(z)=\phi^{(1,n)}(z)=P_{n}(z), \quad 
\lambda=\theta_n=n, 
\]
\[
\phi(z)=\phi^{(j_0,l_0)}(z)=\xi_{j_0}(z)p^{(j_0)}_{l_0}(z), \quad 
\kappa=\theta^{(j_0)}_{l_0}, 
\]
\begin{align}
\label{eps}
\epsilon(z):=
\epsilon^{(j_0,l_0)}(z)=\dfrac{1}{Q^{(j_0)}(z)p^{(j_0)}_{l_0}(z)}, 
\end{align}
the Darboux transformed eigenfunction $\hat{\psi}(z)$ can be normalized into (\ref{hpsi_0}), 
where $j_0\in\{1,2,3,4\}$ and $l_0\in\mathbb{N}_{\geq 1}, n\in\mathbb{N}_{\geq 0}$. 
Moreover, the exceptional HR polynomials determined by (\ref{hphi_to_X1P})-(\ref{quasi_P_2}) can be 
rewritten in more compact forms: 
\begin{align}
\label{XP01}
&
P^{(1,l_0,n)}(z)
=nP_{l_0}(z)P_{n-1}(z;\alpha+1,\beta)-l_0P_{l_0-1}(z;\alpha+1,\beta)P_{n}(z), \\
\label{XP02}
&
P^{(2,l_0,n)}(z)
=(1-z)nP_{l_0}(z; -\beta,-\alpha)P_{n-1}(z;\alpha+1,\beta) \\
&\hspace{24mm}
+(l_0-\alpha-\beta)P_{l_0}(z; -\beta,-\alpha-1)P_{n}(z), \nonumber \\
\label{XP03}
&
P^{(3,l_0,n)}(z)
=\dfrac{(\beta)_{l_0}}{(\alpha+1)_{l_0}}\bigg[
nzP_{l_0}(z; \beta-1,\alpha+1)P_{n-1}(z; \alpha+1,\beta) \\
&\hspace{24mm}
+(\alpha+1)P_{l_0}(z; \beta-1,\alpha+2)P_{n}(z) \bigg],  \nonumber \\
\label{XP04}
&
P^{(4,l_0,n)}(z)
=\dfrac{(-\alpha)_{l_0}}{(-\beta+1)_{l_0}}\bigg[
z(z-1)nP_{l_0}(z; -\alpha-1,-\beta+1)P_{n-1}(z;\alpha+1,\beta) \\
&\hspace{24mm}
+(\alpha+1)P_{l_0+1}(z; -\alpha-2,-\beta+1)P_n(z) 
\bigg], \nonumber 
\end{align}
where the formulas (105)-(108) in \cite{XHR} and the following relations are used 
\begin{align*}
&
(1-z)l_0P_{l_0-1}(z;1-\beta,-\alpha)+(\alpha+\beta)P_{l_0}(z;-\beta,-\alpha)
=-(l_0-\alpha-\beta)P_{l_0}(z;-\beta,-\alpha-1), \\
&
\dfrac{l_0}{\beta+l_0-1}P_{l_0-1}(z;\beta-1,\alpha+2)+P_{l_0}(z;\beta-1,\alpha+1)
=P_{l_0}(z;\beta-1,\alpha+2), \\
&
\dfrac{(-\beta+1)_{l_0}}{-\alpha+l_0+1}(z-1)P_{l_0-1}(z;-\alpha-1,-\beta+2)
-(1-\beta-(\alpha+1)z)P_{l_0}(z;-\alpha-1,-\beta+1) \\
&\hspace{4mm}
=(\alpha+1)P_{l_0+1}(z;-\alpha-2,-\beta+1). 
\end{align*}

According to the discussions in the last part of section 2, the biorthogonal partner of $\hat{\psi}^{(j_0,l_0,n)}(z)$ 
is $\widehat{L}_2^{\ast}\widehat{\psi^{\ast}}(z)$. 
For $\psi^{\ast}(z)=\phi^{(1,n)\ast}(z)$, we have
\begin{align}
\label{hL2_hpsi}
&
\widehat{L}_2^{\ast}\widehat{\psi^{\ast}}(z)
=\widehat{L}_2^{\ast}(\theta^{(j_0)}_{l_0}\overline{\cG^{\ast}_1}-\mu^{(1)}_n\overline{\cG^{\ast}_2})
\tilde{\phi}_2(z^{-1})\phi^{(1,n)\ast}(z)
=\theta^{(j_0)}_{l_0}(\overline{\cG^{\ast}_1}-\overline{\cG^{\ast}_2})\tilde{\phi}_2(z^{-1})\phi^{(1,n)\ast}(z), 
\end{align}
which turns out to be 
\begin{equation}
\label{hL2psi}
\widehat{L}_2^{\ast}\widehat{\psi^{\ast}}(z)
=
\begin{cases}
\dfrac{(\beta)_{l_0}}{(1+\alpha)_{l_0}}
\dfrac{z^{-l_0}(z-1)w_1(z)}{Q^{(j_0)}(z^{-1})Q^{(j_0)}(z)(p^{(j_0)}_{l_0}(z^{-1}))^2}Q^{(j_0,l_0,n)}(z), & j_0=1, \\
\dfrac{(-\alpha)_{l_0}}{(-\beta+1)_{l_0}}
\dfrac{z^{-l_0}(z-1)w_1(z)}{Q^{(j_0)}(z^{-1})Q^{(j_0)}(z)(p^{(j_0)}_{l_0}(z^{-1}))^2}Q^{(j_0,l_0,n)}(z), & j_0=2, \\
\dfrac{(1+\alpha)_{l_0}}{(\beta)_{l_0}}
\dfrac{z^{-2l_0}(z-1)w_1(z)}{Q^{(j_0)}(z^{-1})Q^{(j_0)}(z)(\tilde{p}^{(j_0)}_{l_0}(z))^2}Q^{(j_0,l_0,n)}(z), & j_0=3, \\
\dfrac{(-\beta+1)_{l_0}}{(-\alpha)_{l_0}}
\dfrac{z^{-2l_0}(z-1)w_1(z)}{Q^{(j_0)}(z^{-1})Q^{(j_0)}(z)(\tilde{p}^{(j_0)}_{l_0}(z))^2}Q^{(j_0,l_0,n)}(z), & j_0=4. 
\end{cases}
\end{equation}


The biorthogonal partner of $P^{(j_0,l_0,n)}(z)$ can also be written in an elegant form 
(similar to $Q_n(z)$). 
\begin{prop}[\cite{XHR}]
For $j_0\in\{1,2,3,4\}$, we have 
\begin{align}
\label{rela_X1PQ}
Q^{(j_0,l_0,n)}(z)=P^{(j_0,l_0,n)}(z; \beta-1,\alpha+1). 
\end{align}
\end{prop}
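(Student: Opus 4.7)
The proposition identifies the biorthogonal partner $Q^{(j_0,l_0,n)}(z)$, which is defined implicitly as the polynomial factor in $\widehat{L}_2^\ast \widehat{\psi^\ast}(z)$ on the right-hand side of (\ref{hL2psi}), with the exceptional HR polynomial of the same shape but with parameters twisted by $(\alpha,\beta) \mapsto (\beta-1,\alpha+1)$. The plan is to compute $\widehat{L}_2^\ast \widehat{\psi^\ast}(z)$ from its definition, isolate the polynomial factor, and then recognise the result as the Darboux output (\ref{hpsi_0}) evaluated at twisted parameters.

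The starting point is equation (\ref{hL2_hpsi}), which reduces the computation to evaluating $(\overline{\cG_1^{\ast}} - \overline{\cG_2^{\ast}})\widetilde{\phi}_2(z^{-1})\phi^{(1,n)\ast}(z)$ with $\phi^{(1,n)\ast}(z) = w_1(z)P_n(z;\beta-1,\alpha+1)$ from Lemma \ref{lem_haquasi}. Expanding using the explicit forms (\ref{ophaG}) of $\overline{\cG_j^{\ast}}$ together with the $A_j, B_j, C_j$ of (\ref{opL_ABC}) and the decoupling factor (\ref{eps}), the first-order operator produces, after the $z\partial_z$ derivative is carried out and the weight-gauge factor
\[
\frac{(z-1)w_1(z)}{Q^{(j_0)}(z^{-1})Q^{(j_0)}(z)\,(p^{(j_0)}_{l_0}(z^{-1}))^2}
\]
(times the appropriate $z^{-l_0}$ or $z^{-2l_0}$) is pulled outside, a Wronskian-type combination of $p^{(j_0)}_{l_0}(z^{-1})$ and $P_n(z;\beta-1,\alpha+1)$ plus an algebraic multiple of their product. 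This pinpoints $Q^{(j_0,l_0,n)}(z)$.

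To conclude, I would invoke the observation that under $(\alpha,\beta) \mapsto (\beta-1,\alpha+1)$ the adjoint seed polynomials in Lemma \ref{lem_haquasi} are precisely the twisted versions of the direct seed polynomials in Lemma \ref{lem_quasi}: one checks case-by-case that $\widetilde{p}^{(j_0)}_{l_0}(z) = p^{(j_0)}_{l_0}(z;\beta-1,\alpha+1)$. The reciprocal-argument formulas (\ref{formula_HR_1})--(\ref{formula_HR_4}) then translate the $p^{(j_0)}_{l_0}(z^{-1})$ and $P_n(z^{-1})$ appearing in the previous step into their twisted-parameter counterparts, at which point the Wronskian-plus-product expression matches term-for-term the right-hand side of (\ref{hpsi_0}) evaluated with twisted parameters. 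Applying the convention (\ref{hphi_to_X1P}) that multiplies by $z^{l_0}$ for $j_0=3,4$ then produces $P^{(j_0,l_0,n)}(z;\beta-1,\alpha+1)$ as required.

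The main obstacle is the bookkeeping. One has to keep track of $z^{-l_0}$ versus $z^{-2l_0}$ prefactors in (\ref{hL2psi}), of the Pochhammer normalisations such as $(\beta)_{l_0}/(1+\alpha)_{l_0}$, and of the prefactors $P^{(j_0)}, Q^{(j_0)}$ in (\ref{PQ_1})--(\ref{PQ_2}) under the twist. The parameter swap sends each $P^{(j_0)}(z)$ and $Q^{(j_0)}(z)$ to a new low-degree polynomial which must coincide (up to a sign and a controlled power of $z$) with the factor actually produced by the adjoint differential operator; verifying this for all four $j_0 \in \{1,2,3,4\}$ separately is where the bulk of the calculation lies, but in each case the identity reduces to elementary manipulations with the recurrence (\ref{TTRR_HR}) and the contiguous relations underlying (\ref{formula_HR_1})--(\ref{formula_HR_4}).
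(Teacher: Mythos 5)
The paper itself offers no proof of this proposition---it is quoted from \cite{XHR}---and your route is exactly the derivation implicit in the paper's own setup: compute $\widehat{L}_2^{\ast}\widehat{\psi^{\ast}}(z)$ via (\ref{hL2_hpsi}), strip off the weight--gauge factor displayed in (\ref{hL2psi}), and match the remaining Wronskian-type combination with (\ref{hpsi_0}) at twisted parameters, the key (and correct, easily checked against Lemmas \ref{lem_quasi} and \ref{lem_haquasi}) observation being that $\tilde{p}^{(j_0)}_{l_0}(z)=p^{(j_0)}_{l_0}(z;\beta-1,\alpha+1)$ for all four $j_0$. Your write-up defers the case-by-case bookkeeping (the $z^{-l_0}$ versus $z^{-2l_0}$ powers, the Pochhammer normalisations, and the twisted prefactors $P^{(j_0)},Q^{(j_0)}$) that constitutes the actual content of the exact equality, but the strategy and all its ingredients are sound and consistent with how the result is obtained in \cite{XHR}.
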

It is seen that the degrees of $Q^{(j_0,l_0,n)}(z)$, $j_0=1,2,3,4$, are given by
\begin{align}
\label{deg_XQ0}
\deg{Q^{(j_0,l_0,n)}(z)}=n+l_0-\delta_{1,j}+\delta_{4,j}. 
\end{align}

Consequently, the polynomial sequences $\{P^{(j_0,l_0,n)}(z)\}_{n\in\mathbb{Z}_{\geq 0}^{(j_0,l_0)}}$ and 
$\{Q^{(j_0,l_0,n)}(z)\}_{n\in\mathbb{Z}_{\geq 0}^{(j_0,l_0)}}$ form a biorthogonal system, where  
\begin{align}
\label{index_Z}
\mathbb{Z}_{\geq 0}^{(j_0,l_0)}=
\begin{cases}
\mathbb{Z}_{\geq 0}/\{l_0\}, & j_0=1, \\
\mathbb{Z}_{\geq 0}, & j_0=2,3, \\
\mathbb{Z}_{\geq 0}\cup\{-l_0-1\}, & j_0=4. 
\end{cases}
\end{align}

The weight function corresponding to these exceptional HR polynomials are 
\begin{align}
\label{XW1}
w^{(j_0,l_0)}(z)
=
\begin{cases}
\dfrac{(\beta)_{l_0}}{(1+\alpha)_{l_0}}
\dfrac{z^{l_0}(z-1)w(z)}{P^2_{l_0}(z)}, & j_0=1, \\
\dfrac{(-\alpha)_{l_0}}{(-\beta+1)_{l_0}}
\dfrac{z^{1+l_0}w(z)}{(1-z)P^2_{l_0}(z; -\beta,-\alpha)}, & j_0=2, \\
\dfrac{(1+\alpha)_{l_0}}{(\beta)_{l_0}}
\dfrac{z^{l_0}(z-1)w(z)}{P^2_{l_0}(z; \beta-1,\alpha+1)}, & j_0=3, \\
\dfrac{(-\beta+1)_{l_0}}{(-\alpha)_{l_0}}
\dfrac{z^{1+l_0}w(z)}{(1-z)P^2_{l_0}(z; -\alpha-1,-\beta+1)}, & j_0=4.
\end{cases}
\end{align}

\begin{theorem}
The biorthogonality relations for the four types of exceptional HR polynomials defined by (\ref{XP01})-(\ref{XP04}) 
and (\ref{rela_X1PQ}) are: 
\begin{align}
\label{Xbiorth1}
&
\dfrac{1}{2\pi}\int_{0}^{2\pi}w^{(j_0,l_0)}(e^{ix})P^{(j_0,l_0,n)}(e^{ix})Q^{(j_0,l_0,m)}(e^{-ix})dx 
=h^{(j_0,l_0)}_{n}\delta_{mn}, 
\end{align}
with
\begin{align}
\label{Xh1}
h^{(j_0,l_0)}_{n}
=
\begin{cases}
-(n+\beta)(n-l_0)h_n,  & j_0=1, \quad (n\neq l_0) \\
-(n+\beta)(n-l_0+\alpha+\beta)h_n, & j_0=2, \\
-(n+\beta)(n+l_0+1+\alpha+\beta)h_n, & j_0=3, \\
-(n+\beta)(n+l_0+1)h_n,  & j_0=4, \quad (n\neq -l_0-1)
\end{cases}
\end{align}
where the constants $h_n$ are defined by (\ref{w_HR}). 
\end{theorem}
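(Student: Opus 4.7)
The plan is to deduce the four biorthogonality statements from the abstract Darboux identity (\ref{Xbiorth}) established in Section 2, rather than attacking each integral directly. Specifically, I would specialize the general relation $\langle \widehat{\psi}, \widehat{\cL}_2^{\ast}\widehat{\psi^{\ast}}\rangle = (\tau - \kappa)\, h\, \delta_{\lambda,\tau}$ to the choices $\psi = P_n$ (so $\lambda = n$), $\psi^{\ast} = \phi^{(1,m)\ast}$ (so $\tau = m$), $\phi = \phi^{(j_0,l_0)}$ (so $\kappa = \theta_{l_0}^{(j_0)}$), and $\varepsilon = \varepsilon^{(j_0,l_0)}$ as in (\ref{eps}). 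By (\ref{hphi_to_X1P}), $\widehat{\psi}$ then coincides with $P^{(j_0,l_0,n)}(z)$ for $j_0 = 1,2$ and with $z^{-l_0}P^{(j_0,l_0,n)}(z)$ for $j_0 = 3,4$, while $\widehat{\cL}_2^{\ast}\widehat{\psi^{\ast}}$ has already been expressed in closed form by (\ref{hL2psi}) as a rational prefactor times $Q^{(j_0,l_0,m)}(z)$.

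The next step is to reconcile the prefactor in (\ref{hL2psi}) with the weight $w^{(j_0,l_0)}(z)$ defined in (\ref{XW1}). Because the inner product (\ref{inner_0}) evaluates the second slot at $z^{-1}$, I would combine (\ref{hL2psi}) with the substitution $z \mapsto z^{-1}$, apply (\ref{formula_HR_1}) to convert each $p^{(j_0)}_{l_0}(z^{-1})$ into the shifted-parameter HR polynomial appearing in (\ref{XW1}), and absorb the residual $z^{l_0}$ (resp.\ $z^{2l_0}$) factor using the identification of $\widehat{\psi}$ with $P^{(j_0,l_0,n)}$ for $j_0=3,4$. Once this reconciliation is completed case by case, the abstract identity gives $h^{(j_0,l_0)}_n = (n - \theta_{l_0}^{(j_0)})\, h$; since the four values of $\theta_{l_0}^{(j_0)}$ listed in Lemma \ref{lem_quasi} are precisely $l_0,\, l_0 - \alpha - \beta,\, -l_0-1-\alpha-\beta,\, -l_0-1$, the four second factors appearing in the case distinctions of (\ref{Xh1}) drop out for free.

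It then remains to compute the scalar $h = \langle P_n, \overline{L_2^{\ast}}\phi^{(1,n)\ast}\rangle$, which is the same for all $j_0$. A direct computation using (\ref{ophaL2}) together with the logarithmic derivatives of the gauge $w_1(z) = (1-z)^{\alpha+\beta}(-z)^{-1-\alpha}$ reduces $\overline{L_2^{\ast}}\phi^{(1,n)\ast}$ to $w_1(z)$ times a linear combination of $zP_n(z;\beta-1,\alpha+1)$ and $zP_{n-1}(z;\beta-1,\alpha+1)$, in which (\ref{form_HR_1}) under the parameter shift $(\alpha,\beta) \mapsto (\beta-1, \alpha+1)$ produces the leading coefficient $(n+\beta)$; rewriting each $P_k(z^{-1};\beta-1,\alpha+1)$ via (\ref{formula_HR_1}) in terms of $P_k(z)$ then turns the pairing with $P_n$ into the classical biorthogonality (\ref{Biorth_HR}), only the $k=n$ term survives, and the net coefficient is $-(n+\beta)$, so $h = -(n+\beta)h_n$. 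The main obstacle throughout is the prefactor bookkeeping in the second step: verifying that the Pochhammer ratios such as $(\beta)_{l_0}/(1+\alpha)_{l_0}$, the gauges $w_1(z)$ against $w(z)$, and the powers $z^{-l_0}$ or $z^{-2l_0}$ in (\ref{hL2psi}) reassemble after $z \mapsto z^{-1}$ into exactly $w^{(j_0,l_0)}(z)$ in each of the four types; each case is algebraic and independent of $n$ and $m$, but the branch conventions for the multi-valued factors $(-z)^{-\beta}$ and $(1-z)^{\alpha+\beta}$ on the unit circle must be tracked with care.
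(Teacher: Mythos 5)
Your proposal is correct and follows essentially the same route the paper intends: the theorem is stated as a direct consequence of the abstract Darboux identity (\ref{Xbiorth}) specialized to $\lambda=n$, $\tau=m$, $\kappa=\theta^{(j_0)}_{l_0}$, combined with the explicit form (\ref{hL2psi}) of $\widehat{L}_2^{\ast}\widehat{\psi^{\ast}}$ and the weight (\ref{XW1}), and your identification $h=-(n+\beta)h_n$ together with $h^{(j_0,l_0)}_n=(n-\theta^{(j_0)}_{l_0})h$ reproduces all four cases of (\ref{Xh1}) and is consistent with the paper's relation (\ref{theta_m}). The prefactor bookkeeping you flag is indeed the only remaining verification, and you have identified the correct tools for it.
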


Moreover, by using the relations (\ref{formula_HR_1}) and (\ref{formula_HR_2}), 
one finds that $(P^{(4,l_0,n)}(z))'$, $n=0,1,2,\ldots$, possess a common polynomial factor. 
\begin{lemma}
\label{lem_XP04_Cfactor}
For $n\geq 0$, it holds that 
\begin{align}
\label{XP04_Cfactor}
(P^{(4,l_0,n)}(z))'
=(n+l_0+1)(n+\alpha+1)z^{l_0}p^{(4)}_{l_0}(z)P_n(z;\alpha+1,\beta-1). 
\end{align}
\end{lemma}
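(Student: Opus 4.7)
\emph{Strategy.} The plan is to differentiate $P^{(4,l_0,n)}(z)$ directly from the explicit formula \eqref{XP04}, using Lemma~\ref{formula_HR} and the eigenvalue identity \eqref{opL2_P} to make the factor $P_n(z;\alpha+1,\beta-1)$ appear naturally \emph{before} differentiation. Concretely, set
\[
R_{l_0}(z):=P_{l_0}(z;-\alpha-1,-\beta+1),\quad \tilde P_n(z):=P_n(z;\alpha+1,\beta-1),\quad S_{l_0+1}(z):=P_{l_0+1}(z;-\alpha-2,-\beta+1).
\]
Identity \eqref{formula_HR_1} gives $z^{l_0}p^{(4)}_{l_0}(z) = \tfrac{(-\alpha)_{l_0}}{(-\beta+1)_{l_0}}R_{l_0}(z)$, reducing \eqref{XP04_Cfactor} to showing
\[
(P^{(4,l_0,n)}(z))' = (n+l_0+1)(n+\alpha+1)\frac{(-\alpha)_{l_0}}{(-\beta+1)_{l_0}}\,R_{l_0}(z)\tilde P_n(z).
\]
Applying \eqref{formula_HR_2} to replace $nP_{n-1}(z;\alpha+1,\beta)$ by $P'_n(z)$ in \eqref{XP04}, and then \eqref{opL2_P} in the equivalent form $(z-1)P'_n(z)=(n+\alpha+1)\tilde P_n(z)-(\alpha+1)P_n(z)$, I rewrite
\[
P^{(4,l_0,n)}(z) = \frac{(-\alpha)_{l_0}}{(-\beta+1)_{l_0}}\Bigl[(n+\alpha+1)zR_{l_0}(z)\tilde P_n(z) + (\alpha+1)(S_{l_0+1}(z)-zR_{l_0}(z))P_n(z)\Bigr].
\]

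I would then differentiate this new expression, using $S'_{l_0+1}=(l_0+1)R_{l_0}$ and $\tilde P'_n(z) = nP_{n-1}(z;\alpha+2,\beta-1)$ from \eqref{formula_HR_2}. Collecting terms, the desired identity is equivalent to the vanishing of the residual coefficients multiplying $P_n(z)$ and $P'_n(z)$ in the resulting sum, leaving only $(n+l_0+1)(n+\alpha+1)R_{l_0}\tilde P_n$. For this cancellation I would employ three inputs: the ladder identity displayed in the text immediately after \eqref{XP04} that expresses $(\alpha+1)S_{l_0+1}$ in terms of $R_{l_0}$ and $P_{l_0-1}(z;-\alpha-1,-\beta+2)$; the $L_2$-relation applied with shifted parameters to $R_{l_0}$, giving $(1-z)R'_{l_0}(z)+\alpha R_{l_0}(z) = -(l_0-\alpha)P_{l_0}(z;-\alpha,-\beta)$; and \eqref{form_HR_2}, which converts $(z-1)P_{n-1}(z;\alpha+1,\beta)$ back into $P_n,P_{n-1}$ combinations compatible with the $\tilde P_n$ family.

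The main obstacle is the bookkeeping, since several parameter-twisted HR families appear simultaneously and the cancellation of the $P_n(z)$-coefficient in $(P^{(4,l_0,n)}(z))'$ is not manifest term by term: it is forced by the precise constants in the ladder identities above. Introducing $\tilde P_n$ \emph{before} differentiating (via the $L_2$-rewriting) is the key simplification, since otherwise the derivative produces polynomials at the parameter offset $(\alpha+2,\beta)$ that obscure the final factorization and prevent the $(n+l_0+1)(n+\alpha+1)$ factor from being read off directly.
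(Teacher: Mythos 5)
Your reduction is correct, and the route is genuinely different from the paper's. The paper differentiates the raw Wronskian-type definition $z^{l_0}[p^{(4)}_{l_0}(Q^{(4)}P_n'-P^{(4)}P_n)-(p^{(4)}_{l_0})'Q^{(4)}P_n]$ head-on; the crux there is recognizing that the cluster of terms involving $(p^{(4)}_{l_0})'$ and $(p^{(4)}_{l_0})''$ collapses to $\frac{l_0(\beta-1)}{z}p^{(4)}_{l_0}(z)$ because $P_{l_0}(z^{-1};-\beta,-\alpha)$ satisfies the second-order equation \eqref{Deq_HR} after $z\to z^{-1}$, $\alpha\to-\beta$, $\beta\to-\alpha$; the remaining bracket in $P_n'',P_n',P_n$ is then simplified via \eqref{Deq_HR} and \eqref{opL2_P}. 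You instead start from the product form \eqref{XP04}, pass to $R_{l_0}=P_{l_0}(z;-\alpha-1,-\beta+1)$ via \eqref{formula_HR_1} so that no $z^{-1}$ arguments survive, and pre-insert the $L_2$ relation so that $\tilde P_n$ is already present before differentiating; the seed polynomial then only ever enters through first-order ladder data ($S_{l_0+1}'=(l_0+1)R_{l_0}$ and a raising-type identity), never through its second-order ODE. I verified your intermediate rewriting of $P^{(4,l_0,n)}$ and spot-checked the resulting derivative identity; it is correct, and a useful observation that would tighten your "bookkeeping" step is that after eliminating $P_n''$ the coefficient of $P_n$ matches the target automatically, so the entire proof reduces to the single $n$-independent identity
\begin{align*}
(\alpha+1)S_{l_0+1}(z)=\bigl((l_0+1+\alpha)z+\beta-l_0-1\bigr)R_{l_0}(z)-z(z-1)R_{l_0}'(z),
\end{align*}
which is exactly the ladder/raising identity for the seed family that you cite (the displayed relation following \eqref{XP04} combined with \eqref{formula_HR_2}). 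The $n$-independence is the structural reason your asserted cancellation is "forced by the precise constants," and making it explicit would turn your sketch into a complete proof. What each approach buys: the paper's argument is uniform in spirit with how the other $j_0$ cases are treated (it leans on the GEVP/ODE machinery), while yours is more elementary in that it avoids the $z\to z^{-1}$ conjugation and second derivatives of the seed polynomial entirely.
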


\begin{proof}
According to the definition (\ref{hphi_to_X1P}) and (\ref{hpsi_0}), we have 
\[
P^{(4,l_0,n)}(z)=z^{l_0}\left[p^{(4)}_{l_0}(z)\big(Q^{(4)}(z)P'_n(z)-P^{(4)}(z)P_n(z)\big)
-(p^{(4)}_{l_0}(z))'Q^{(4)}(z)P_n(z)
\right], 
\]
then 
\begin{align}
&
(P^{(4,l_0,n)}(z))'=z^{l_0-1}p^{(4)}_{l_0}(z)\bigg[l_0\big(Q^{(4)}(z)P'_n(z)-P^{(4)}(z)P_n(z)\big)
+Q^{(4)}(z)P''_n(z) \nonumber \\
\label{XP04_d}
&\hspace{50mm}
+z\big((Q^{(4)}(z))'-P^{(4)}(z)\big)P'_n(z)-(P^{(4)}(z))'P_n(z)\bigg]  \\
&\quad
-z^{l_0}\bigg[\left(\frac{l_0}{z}Q^{(4)}(z)+(Q^{(4)}(z))'+P^{(4)}(z)\right)(p^{(4)}_{l_0}(z))'+Q^{(4)}(z)(p^{(4)}_{l_0}(z))''
\bigg]P_n(z). \nonumber \
\end{align}
It turns out that 
\[
\left(\frac{l_0}{z}Q^{(4)}(z)+(Q^{(4)}(z))'+P^{(4)}(z)\right)(p^{(4)}_{l_0}(z))'+Q^{(4)}(z)(p^{(4)}_{l_0}(z))''
=\frac{l_0(\beta-1)}{z}p^{(4)}_{l_0}(z). 
\]
In fact, from (\ref{PQ_1}), (\ref{PQ_2}) and lemma \ref{lem_quasi}, the above equation can be rewritten into 
\[
\left[(-\beta-l_0+(l_0+1-\alpha)z)\partial_z+z(z-1)\partial_z^2\right]P_{l_0}(z^{-1};-\beta,-\alpha)
=\frac{l_0(\beta-1)}{z}P_{l_0}(z^{-1};-\beta,-\alpha), 
\]
which is equivalent to (\ref{Deq_HR}) through the transformations $z\rightarrow z^{-1}$, 
$\alpha\rightarrow -\beta$ and $\beta\rightarrow -\alpha$. 
Therefore, (\ref{XP04_d}) can be rewritten as 
\[
(P^{(4,l_0,n)}(z))'=z^{l_0}p^{(4)}_{l_0}(z)\bigg[z(z-1)P''_n(z)+(-2+\beta-l_0+(3+\alpha+l_0)z)P'_n(z)
+(l_0+1)(\alpha+1)P_n(z)\bigg], 
\]
which can be further simplified into (\ref{XP04_Cfactor}) due to (\ref{Deq_HR}) and (\ref{opL2_P}). 
\end{proof}
Note that this property is unique to the case when $j_0=4$. 
And it will play an important role in determining the polynomial factor in the left-hand side of 
(\ref{RR-XHR}). 


%

\section{Proofs of Theorem \ref{thm_main_1} and Theorem \ref{thm_main_2}}
In this section, we will present the proofs of Theorem \ref{thm_main_1} and Theorem \ref{thm_main_2}, 
where the following operators played significant roles: 
\begin{align}
\label{forward}
\text{forward operator:} & \quad
\cF=\dfrac{1}{\varepsilon(z)}\left(\partial_z-\dfrac{\phi'(z)}{\phi(z)} I \right), \quad 
\cF\psi(z)=\widehat{\psi}(z), \\
\label{back_2}
\text{backward operator:} & \quad
\cM=\kappa\tilde{\phi}_2(z) ( \cG_1-\cG_2), \quad
\cM\widehat{{\psi}}(z) = (\lambda - \kappa) L_2{\psi(z)}. 
\end{align}
Here $\varepsilon(z)$ is defined by (\ref{eps}), $\tilde{\phi}_2(z)$ is defined by (\ref{tphi}), 
$\cG_1, \cG_2$ are defined by (\ref{opG}), 
the rightest relations of (\ref{forward}) and (\ref{back_2}) follow from 
(\ref{DT_f1}) and (\ref{backward_hpsi}), (\ref{opL}), respectively. 
Using (\ref{opG}) and (\ref{tphi}), the backward operator can be expressed as  
\begin{align}
\label{back_2_expr}
\cM
&=\epsilon(z)
\left[A_1(z)\partial_z+A_1(z)\left(\frac{\epsilon'(z)}{\epsilon(z)}+\frac{\phi'(z)}{\phi(z)}\right)+B_1(z)-\kappa B_2(z)
\right], 
\end{align}
where $A_1(z), B_1(z)$ and $B_2(z)$ are defined by (\ref{opL_ABC}). 

\begin{lemma}
\label{lem_Mpsi}
If $\psi(z)=P_n(z)$, $\phi(z)=\phi^{(j_0,l_0)}(z)$ and $\kappa=\theta_{l_0}^{(j_0)}$, 
then $\widehat{{\psi}}(z)=\widehat{\psi}^{(j_0,l_0,n)}(z)$, 
\begin{align}
\label{Mpsi_0}
\cM \widehat{\psi}^{(j_0,l_0,n)}(z)
&=\Xi_n^{(j_0,l_0)}P_n(z; \alpha+1,\beta-1), 
\end{align}
where $\Xi_n^{(j_0,l_0)}=-(n-\theta_{l_0}^{(j_0)})(n+\alpha+1)$. 
\end{lemma}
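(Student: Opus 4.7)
The plan is to reduce the lemma to a one-line substitution, since the operator $\cM$ was introduced precisely to encode the identity $\cM\widehat{\psi}(z) = (\lambda - \kappa) L_2\psi(z)$ already recorded at (\ref{back_2}). I would first re-derive or cite this identity in a line: combining $\cM = \kappa\tilde{\phi}_2(\cG_1 - \cG_2)$ with the factorizations $\cL_j = \tilde{\phi}_j(\cG_j\cF + I)$, the GEVP $\cL_1\psi = \lambda\cL_2\psi$, and $\tilde{\phi}_1 = \kappa\tilde{\phi}_2$ gives
\[
\cM\widehat{\psi} = \kappa\tilde{\phi}_2(\cG_1\cF - \cG_2\cF)\psi = (\cL_1 - \kappa\cL_2)\psi = (\lambda - \kappa)L_2\psi.
\]
This identity is the sole engine of the proof, and every subsequent step is cosmetic.

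Next I would specialize to the hypotheses: $\psi(z) = P_n(z)$, $\lambda = \theta_n = n$, $\kappa = \theta_{l_0}^{(j_0)}$, and $\phi(z) = \phi^{(j_0,l_0)}(z) = \xi_{j_0}(z)p_{l_0}^{(j_0)}(z)$. Applying (\ref{opL2_P}) yields $L_2 P_n(z) = -(n+\alpha+1)P_n(z;\alpha+1,\beta-1)$, so that
\[
\cM\widehat{\psi}^{(j_0,l_0,n)}(z) = \bigl(n - \theta_{l_0}^{(j_0)}\bigr)\bigl(-(n+\alpha+1)\bigr)P_n(z;\alpha+1,\beta-1) = \Xi_n^{(j_0,l_0)}P_n(z;\alpha+1,\beta-1),
\]
which is precisely the claim.

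The only nontrivial piece of bookkeeping — and the only place one could slip — is the identification $\widehat{\psi}(z) = \cF P_n(z) = \widehat{\psi}^{(j_0,l_0,n)}(z)$ with the explicit normalization (\ref{hpsi_0}), without any stray scalar or $z$-dependent factor. Using $\varepsilon(z) = 1/(Q^{(j_0)}(z)p_{l_0}^{(j_0)}(z))$ and expanding $\cF = (\phi/\varepsilon)\partial_z(1/\phi)$, one obtains (\ref{hpsi_0}) precisely when $P^{(j_0)}(z) = Q^{(j_0)}(z)\xi_{j_0}'(z)/\xi_{j_0}(z)$. This identity is a short one-line check for each $j_0 \in \{1,2,3,4\}$ using (\ref{PQ_1})--(\ref{PQ_2}) and Lemma \ref{lem_quasi}; for example, in the $j_0 = 4$ case one has $\xi_4'/\xi_4 = (\beta-1)/z + (\alpha+\beta)/(1-z)$, and multiplying by $Q^{(4)}(z) = z(1-z)$ gives back $-1+\beta+(1+\alpha)z = P^{(4)}(z)$. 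Beyond this case-by-case verification, no real obstacle is expected.
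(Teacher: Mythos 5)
Your proposal is correct and follows the same route as the paper, whose proof is the single sentence ``substituting (\ref{opL2_P}) into the rightmost relation of (\ref{back_2})''; you merely fill in the derivation of that backward identity and the normalization check $\cF P_n=\widehat{\psi}^{(j_0,l_0,n)}$, both of which the paper takes as already established in Section 2 and (\ref{hpsi_0}).
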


\begin{proof}
Substituting (\ref{opL2_P}) into the rightest relation of (\ref{back_2}), then one obtains (\ref{Mpsi_0}). 
\end{proof}



\begin{remark}
From (\ref{back_2}) and (\ref{Mpsi_0}) one can easily derive 
\begin{align}
\label{MFp_0}
&
\langle w(z;\alpha+1,\beta-1)\cM \widehat{\psi}^{(j_0,l_0,n)}(z), Q_m(z;\alpha+1,\beta-1) \rangle 
=\Xi_n^{(j_0,l_0)}h_n^{\alpha+1,\beta-1}\delta_{nm},  
\end{align}
where the inner product $\langle, \rangle$ is defined by (\ref{inner_0}), $w(z)$ refers to the weight function (\ref{w_HR}), 
$h_n^{\alpha+1,\beta-1}$ is the norming constant $h_n$ with $\alpha$ and $\beta$ replaced by 
$\alpha+1$ and $\beta-1$. 

It follows from (\ref{decom_haL}) that 
\begin{align}
\label{haFM}
\overline{\cF^{\ast}\cM^{\ast}}
=\kappa\overline{\cF^{\ast}}(\overline{\cG_1^{\ast}}-\overline{\cG_2^{\ast}})\overline{\tilde{\phi}_2(z)}
=\overline{\cL_1^{\ast}}-\kappa\overline{\cL_2^{\ast}}. 
\end{align}
Therefore, under the assumption of lemma \ref{lem_Mpsi}, we have 
\begin{align}
\label{haFM_0}
\overline{\cF^{\ast}}[\overline{\cM^{\ast}}[\overline{w(z;\alpha+1,\beta-1)}Q_m(z;\alpha+1,\beta-1)]]
=-(m-\theta_{l_0}^{(j_0)})(m+\beta)\overline{w(z)}Q_m(z), 
\end{align}
where the following relation (which follows from remark 3.9 of \cite{XHR}) was used
\[
\overline{\cL_2^{\ast}}[\overline{w(z;\alpha+1,\beta-1)}Q_m(z;\alpha+1,\beta-1)]
=-(m+\beta)\overline{w(z)}Q_m(z). 
\]
If $j_0=1$ and $m=l_0$, then since $\theta_{l_0}^{(1)}=l_0$, it follows from (\ref{haFM_0}) that 
\[
\overline{\cF^{\ast}}[\overline{\cM^{\ast}}[\overline{w(z;\alpha+1,\beta-1)}Q_m(z;\alpha+1,\beta-1)]]=0. 
\]

The equation (\ref{haFM_0}) also implies the following equation
\begin{align}
\label{MFp_10}
\langle P_n(z), \overline{\cF^{\ast}}[\overline{\cM^{\ast}}[\overline{w(z;\alpha+1,\beta-1)}Q_{m}(z;\alpha+1,\beta-1)]] 
\rangle
=(\theta_{l_0}^{(j_0)}-n)(n+\beta)h_{n}\delta_{nm}, 
\end{align}
or, furthermore,  
\begin{align}
\label{MFp_1}
\langle \cM[\cF P_n(z)], \overline{w(z;\alpha+1,\beta-1)}Q_m(z;\alpha+1,\beta-1)
\rangle
=(\theta_{l_0}^{(j_0)}-n)(n+\beta)h_{n}\delta_{nm}, 
\end{align}
which is equivalent to (\ref{MFp_0}) since 
\begin{align}
\label{h_twisted_0}
h_n^{\alpha+1,\beta-1}=\frac{n+\beta}{n+\alpha+1}h_n. 
\end{align}
\end{remark}

\begin{remark}
By observing the eigenvalues $\theta_{l_0}^{(j_0)}$ defined in lemma \ref{lem_quasi}, it follows immediately that 
\begin{align}
\label{Mpsi_1}
&
\cM \widehat{\psi}^{(1,l_0,l_0)}(z)=0, \quad
\cM \widehat{\psi}^{(2,l_0,l_0-N)}(z;\alpha,N-\alpha)=0, \quad 0\leq N\leq l_0, \\
&
\cM \widehat{\psi}^{(4,l_0,-l_0-1)}(z)=0, 
\end{align}
where 
\[
\widehat{\psi}^{(1,l_0,l_0)}(z)=0, \quad 
\widehat{\psi}^{(2,l_0,l_0-N)}(z;\alpha,N-\alpha)=0, \quad
\widehat{\psi}^{(4,l_0,-l_0-1)}(z)=z^{-l_0}.
\]
\end{remark}

In what follows, we always assume that $\psi(z)$, $\phi(z)$ and $\kappa$ are defined as in lemma \ref{lem_Mpsi}, hence 
the backward operator $\cM$ maps an exceptional HR polynomial $P^{(j_0,l_0,n)}(z)$ into an HR polynomial 
with twisted parameters. 


We have already known that the backward operator $\cM$ maps $\widehat{\psi}^{(j_0,l_0,n)}(z)$ to 
an HR polynomial with shifted parameters. 
In the following lemma, we will show the inverse, that is, 
if $\cM$ maps a Laurent polynomial $p(z)$ whose only pole is $z=0$ to a polynomial, 
then $p(z)$ must belong to the space spanned by $\{\widehat{\psi}^{(j_0,l_0,n)}(z)\}$. 

\begin{lemma}
\label{lem_Mq}
For any $p(z)\in\{z^{-k}P(z)\mid P(z)\in\mathbb{R}[z]\}$, $k\in\mathbb{N}$, then 
$\cM[p(z)]\in\mathbb{R}[z]$ if and only if 
\begin{align*}
p(z)\in {\rm span}\{\widehat{\psi}^{(j_0,l_0,n)}(z)\}_{n\in\mathbb{Z}_{\geq 0}^{(j_0,l_0)}}, 
\end{align*}
where 
\begin{align*}
\mathbb{Z}_{\geq 0}^{(j_0,l_0)}=
\begin{cases}
\mathbb{Z}_{\geq 0}/\{l_0\}, & j_0=1, \\
\mathbb{Z}_{\geq 0}, & j_0=2,3, \\
\mathbb{Z}_{\geq 0}\cup\{-l_0-1\}, & j_0=4. 
\end{cases}
\end{align*}
\end{lemma}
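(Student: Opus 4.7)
The ``if'' direction is immediate from Lemma~\ref{lem_Mpsi}: each $\widehat{\psi}^{(j_0,l_0,n)}(z)$ maps under $\mathcal{M}$ to the polynomial $\Xi_n^{(j_0,l_0)} P_n(z;\alpha+1,\beta-1)$, so any linear combination lies in $\mathbb{R}[z]$.

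For the ``only if'' direction, take $p(z)\in\mathbb{R}[z,z^{-1}]$ with $\mathcal{M}[p(z)]\in\mathbb{R}[z]$ and expand the image in the HR basis with shifted parameters, $\mathcal{M}[p(z)] = \sum_{n=0}^{d} c_n P_n(z;\alpha+1,\beta-1)$. By Lemma~\ref{lem_Mpsi}, for every $n\in\mathbb{Z}^{(j_0,l_0)}_{\geq 0}$ with $\Xi_n^{(j_0,l_0)}\neq 0$ one may subtract $(c_n/\Xi_n^{(j_0,l_0)})\widehat{\psi}^{(j_0,l_0,n)}(z)$ from $p$ to cancel that term, so the residual Laurent polynomial $\tilde p(z)$ has $\mathcal{M}[\tilde p]$ supported only on the ``exceptional'' indices. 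The kernel $\ker\mathcal{M}\cap\mathbb{R}[z,z^{-1}]$ is computed from the first-order ODE $A_1(z)q'(z)+\tilde g(z)q(z)=0$ associated to $\mathcal{M}=\epsilon(z)[A_1\partial_z+\tilde g]$. A partial-fraction decomposition of $\tilde g/A_1$ using the data of Lemma~\ref{lem_quasi} produces $(1-\beta-l_0)/z-(1+\alpha+\beta)/(1-z)$, $(1+\alpha-l_0)/z$, $l_0/z-(1+\alpha+\beta)/(1-z)$ and $l_0/z$ for $j_0=1,2,3,4$ respectively; exponentiation then shows the homogeneous solution carries non-integer exponents in the cases $j_0\in\{1,2,3\}$ (leaving only $q=0$ among Laurent polynomials) and reduces to $Cz^{-l_0}=C\,\widehat{\psi}^{(4,l_0,-l_0-1)}(z)$ for $j_0=4$.

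For $j_0\in\{2,3,4\}$ this suffices: under genericity of $\alpha,\beta$ no nonnegative integer equals $\theta_{l_0}^{(j_0)}$, so every $c_n$ has already been cancelled and $\mathcal{M}[\tilde p]=0$; the kernel analysis then gives $\tilde p\in\mathrm{span}\{\widehat{\psi}^{(j_0,l_0,n)}\}_{n\in\mathbb{Z}^{(j_0,l_0)}_{\geq 0}}$. The main obstacle is the case $j_0=1$, where $\theta_{l_0}^{(1)}=l_0$ is a nonnegative integer and the residual may be $\mathcal{M}[\tilde p]=c_{l_0}P_{l_0}(z;\alpha+1,\beta-1)$. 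To show $c_{l_0}=0$, I first observe that $\epsilon=1/P_{l_0}$ is regular and nonzero at $z=0$, so matching the coefficient of $z^{-k}$ in $A_1\tilde p'+\tilde g\tilde p$ (which equals $(1-\beta-l_0-k)\tilde p_{-k}$) forces $\tilde p_{-k}=0$ recursively for generic $\beta$, giving $\tilde p\in\mathbb{R}[z]$; a leading-term comparison then yields $\deg\tilde p\leq 2l_0-1$. A final dimension count identifies the codimension-$l_0$ subspace of polynomials of degree $\leq 2l_0-1$ satisfying $\mathcal{M}[\cdot]\in\mathbb{R}[z]$ (the codimension coming from $P_{l_0}(z)\mid A_1\tilde p'+\tilde g\tilde p$) with the $l_0$-dimensional span $\{\widehat{\psi}^{(1,l_0,n)}:0\leq n\leq l_0-1\}$ embedded in it by the ``if'' direction; equality of dimensions forces $c_{l_0}=0$ and places $\tilde p$ in the desired span, completing the proof.
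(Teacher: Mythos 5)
Your overall route is the same as the paper's: sufficiency from Lemma \ref{lem_Mpsi}, and necessity by splitting $p$ into a particular solution (built term-by-term from $\widehat{\psi}^{(j_0,l_0,n)}$ using $\cM\widehat{\psi}^{(j_0,l_0,n)}=\Xi_n^{(j_0,l_0)}P_n(z;\alpha+1,\beta-1)$) plus an element of $\ker\cM$, which you then compute from the homogeneous first-order ODE; your kernel exponents agree with (\ref{Ker_M}), and your treatment of $j_0=2,3,4$ matches the paper's. You are in fact more explicit than the paper in isolating the one genuinely delicate point, namely the index $n=l_0$ for $j_0=1$ where $\Xi_{l_0}^{(1,l_0)}=0$, and your elimination of negative powers of $z$ by matching the coefficient of $z^{-k}$ is a correct addition.

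However, the final dimension count you use to force $c_{l_0}=0$ has a gap: it is circular as stated. Writing $T[p]=A_1(z)p'+\tilde g(z)p$, the condition $\cM[p]\in\mathbb{R}[z]$ on the $2l_0$-dimensional space $V_{2l_0-1}$ of polynomials of degree $\le 2l_0-1$ is $P_{l_0}\mid T[p]$. Since $T\colon V_{2l_0-1}\to V_{2l_0}$ is injective with image of codimension $1$, and $P_{l_0}V_{l_0}$ has codimension $l_0$ in $V_{2l_0}$, the intersection $T(V_{2l_0-1})\cap P_{l_0}V_{l_0}$ has dimension either $l_0$ or $l_0+1$; the second case occurs exactly when $P_{l_0}(z)P_{l_0}(z;\alpha+1,\beta-1)\in T(V_{2l_0-1})$, i.e.\ exactly when a polynomial solution of $\cM[\tilde p]=P_{l_0}(z;\alpha+1,\beta-1)$ exists. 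So asserting that the codimension is exactly $l_0$ is equivalent to the claim you are trying to prove, not a proof of it. That the bad case really can occur for non-generic parameters is easy to check: for $l_0=1$ a direct computation shows $T[\tilde p]=P_1(z)P_1(z;\alpha+1,\beta-1)$ is solvable in $V_1$ precisely when $\alpha+\beta=-1$. So you need one further input, e.g.\ a residue/Fredholm-type obstruction showing $P_{l_0}(z)P_{l_0}(z;\alpha+1,\beta-1)\notin T(\mathbb{R}[z])$ under (\ref{cond_conv_w}), or an explicit coefficient computation. To be fair, the paper's own proof is no more rigorous at this exact spot — it simply asserts that ``the corresponding $p(x)$ does not exist \ldots in view of (\ref{Mpsi_1})'' — but since you explicitly rest the case $j_0=1$ on the codimension claim, the gap should be acknowledged and closed.
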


\begin{proof}
The sufficiency is obvious, so we only need to show that 
if $\cM[p(z)]\in\mathbb{R}[z]$, then 
$p(z)\in {\rm span}\{\widehat{\psi}^{(j_0,l_0,n)}(z)\}_{n\in\mathbb{Z}_{\geq 0}^{(j_0,l_0)}}$. 
We show it similarly with the proof of proposition 5.3 in \cite{X-Kra}. 
Let $r(z)=\cM[p(z)]$, then we have
\[
Q^{(j_0)}(z)p^{(j_0)}_{l_0}(z)r(z)
=A_1(z)p'(z)+
\left(A_1(z)\left(\frac{P^{(j_0)}(z)-(Q^{(j_0)}(z))'}{Q^{(j_0)}(z)}\right)+B_1(z)-\kappa B_2(z)\right)p(z), 
\]
which leads to 
\begin{align}
\label{eq_pr_1}
p^{(1)}_{l_0}(z)r(z)
&=z(1-z)p'(z)
+(1-\beta-l_0+(l_0-\alpha-2)z)p(z), & \text{if } j_0=1, \\
\label{eq_pr_2}
(1-z)p^{(2)}_{l_0}(z)r(z)
&=z(1-z)p'(z)
+(1+\alpha-l_0+(l_0-\alpha-1)z)p(z),  &\text{if } j_0=2, \\
\label{eq_pr_3}
zp^{(3)}_{l_0}(z)r(z)
&=z(1-z)p'(z)
+(l_0-(l_0+\alpha+\beta+1)z)p(z), &\text{if } j_0=3, \\
\label{eq_pr_4}
z(1-z)p^{(4)}_{l_0}(z)r(z)
&=z(1-z)p'(z)
+(l_0-l_0 z)p(z), &\text{if } j_0=4. 
\end{align}
If $r(z)$ is a polynomial of degree $m$ which can be expanded in terms of the HR polynomials with shifted parameters 
as follows 
\begin{align*}
r(z)=\sum^{m}_{n=0, \hspace{1mm}n\in\mathbb{Z}_{\geq 0}^{(j_0,l_0)}}
\Xi_n^{(j_0,l_0)}C_n^{(j_0)}P_n(z; \alpha+1,\beta-1), 
\end{align*}
then one finds a particular solution to (\ref{eq_pr_1})-(\ref{eq_pr_4}) by using (\ref{Mpsi_0}): 
\[
p(z)=p_r^{(j_0,l_0)}(z)=\sum^{m}_{n=0, \hspace{1mm}n\in\mathbb{Z}_{\geq 0}^{(j_0,l_0)}}
C_n^{(j_0)}\widehat{\psi}^{(j_0,l_0,n)}(z)
\in {\rm span}\{\widehat{\psi}^{(j_0,l_0,n)}(z)\}_{n\in\mathbb{Z}_{\geq 0}^{(j_0,l_0)}}. 
\]
Note that if the expansion of $r(z)$ includes $P_{l_0}(z; \alpha+1,\beta-1)$ when $j_0=1$, 
or $P_{l_0-N}(z; \alpha+1, N-\alpha-1)$ when $j_0=2$ (where $0\leq N\leq l_0$), 
or $P_{-l_0-1}(z; \alpha+1, N-\alpha-1)$ (which corresponds to the added state, and it equals 0) when $j_0=4$, 
then the corresponding $p(x)$ does not exist and the assumption does not hold in view of (\ref{Mpsi_1}). 
So, by excluding $P_{l_0}(z; \alpha+1,\beta-1)$ when $j_0=1$ and $P_{l_0-N}(z; \alpha+1, N-\alpha-1)$ when $j_0=2$ 
and $P_{-l_0-1}(z; \alpha+1, N-\alpha-1)$ when $j_0=4$ 
from the expansion of $r(z)$, the equations (\ref{eq_pr_1})-(\ref{eq_pr_4}) can be solved by a general polynomials 
$r(z)$ of degree $m$ for any given $(j_0,l_0)$. 

Let $p(z)=p_0^{(j_0,l_0)}(z)+p_r^{(j_0,l_0)}(z)$ be a general solution to (\ref{eq_pr_1})-(\ref{eq_pr_4}), 
the equations of $p_0^{(j_0,l_0)}(z)$ then become a sequence of a homogeneous first-order differential equations 
with the right-hand sides equal to 0 in (\ref{eq_pr_1})-(\ref{eq_pr_4}). 
Thus $p_0^{(j_0,l_0)}(z)\in\text{Ker}\cM \cap \{z^{-k}P(z)\mid P(z)\in\mathbb{R}[z]\}$. 
It is easily seen that 
\begin{equation}
\label{Ker_M}
\text{Ker}\cM=
\begin{cases}
(1-z)^{-1-\alpha-\beta}z^{-1+\beta+l_0}, & j_0=1, \\
z^{-1-\alpha+l_0}, & j_0=2, \\
(1-z)^{-1-\alpha-\beta}z^{-l_0}, & j_0=3, \\
z^{-l_0}, & j_0=4, 
\end{cases}
\end{equation}
from which we obtain 
\[
p_0^{(1,l_0)}(z)=\widehat{\psi}^{(1,l_0,l_0)}(z), \quad 
p_0^{(2,l_0)}(z)=\widehat{\psi}^{(2,l_0,l_0-N)}(z;\alpha,N-\alpha), 
\]
\[
p_0^{(3,l_0)}(z)=0, \quad 
p_0^{(4,l_0)}(z)=z^{-l_0}=\widehat{\psi}^{(4,l_0,-l_0-1)}(z). 
\]
Therefore, we conclude that 
\[
p(z)\in {\rm span}\{\widehat{\psi}^{(j_0,l_0,n)}(z)\}_{n\in\mathbb{Z}_{\geq 0}^{(j_0,l_0)}}. 
\]
\end{proof}

\begin{lemma}
\label{lem_qpi}
Let $q^{(j_0)}_{l_0}(z)$ be a non-constant polynomial such that 
\begin{align}
\label{qpi_0}
q^{(j_0)}_{l_0}(z)\in
\begin{cases}
\textup{span}\{P^{(4,l_0,n)}(z;-\alpha-1,-\beta+1)\}_{n\in\mathbb{Z}_{\geq 0}^{(1,l_0)}}, & j_0=1, \\
\textup{span}\{P^{(4,l_0,n)}(z;\beta-1,\alpha+1)\}_{n\in\mathbb{Z}_{\geq 0}^{(2,l_0)}}, & j_0=2, \\
\textup{span}\{P^{(4,l_0,n)}(z;-\beta,-\alpha)\}_{n\in\mathbb{Z}_{\geq 0}^{(3,l_0)}}, & j_0=3, \\
\textup{span}\{P^{(4,l_0,n)}(z)\}_{n\in\mathbb{Z}_{\geq 0}^{(4,l_0)}}, & j_0=4, 
\end{cases}
\end{align}
then there exist some constants $c_{n,j}$, $j=0,\ldots,n+d_{j_0,l_0}$, such that 
\begin{align}
\label{expan_0}
\cM[q^{(j_0)}_{l_0}(z)\widehat{\psi}^{(j_0,l_0,n)}(z)]
=\sum^{n+d_{j_0,l_0}}_{j=0}c_{n,j}P_j(z; \alpha+1,\beta-1), 
\quad c_{n, d_{j_0,l_0}}\neq 0, 
\end{align}
where $d_{j_0,l_0}=\deg{q^{(j_0)}_{l_0}(z)}\geq l_0+1$. 
In particular, for the lowest degree $d_{j_0,l_0}=l_0+1$, $q^{(j_0)}_{l_0}(z)$ can be given by 
\begin{align}
\label{qpi}
q^{(j_0)}_{l_0}(z)=
\begin{cases}
C\int{p^{(j_0)}_{l_0}(z)}dz, & j_0=1,2, \\
C\int{z^{l_0}p^{(j_0)}_{l_0}(z)}dz, & j_0=3,4, 
\end{cases}
\end{align}
where $C$ is an arbitrary constant. 

\end{lemma}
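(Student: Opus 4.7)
\emph{Proof proposal.} The backward operator $\cM$ in (\ref{back_2_expr}) is first-order, so Leibniz gives
\begin{align*}
\cM[q^{(j_0)}_{l_0}(z)\widehat{\psi}^{(j_0,l_0,n)}(z)]
&=\epsilon(z)A_1(z)(q^{(j_0)}_{l_0})'(z)\,\widehat{\psi}^{(j_0,l_0,n)}(z)
+q^{(j_0)}_{l_0}(z)\,\cM[\widehat{\psi}^{(j_0,l_0,n)}(z)].
\end{align*}
By Lemma \ref{lem_Mpsi} the second summand equals $\Xi_n^{(j_0,l_0)}q^{(j_0)}_{l_0}(z)P_n(z;\alpha+1,\beta-1)$, which is already a polynomial of degree $n+d_{j_0,l_0}$. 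Thus the whole plan is to show that the first summand is also a polynomial, identify its degree, and then sum the expansions in the basis $\{P_j(z;\alpha+1,\beta-1)\}_{j\ge 0}$.

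The first step is to spell out the pole structure. Since $\epsilon(z)A_1(z)=z(1-z)/(Q^{(j_0)}(z)p^{(j_0)}_{l_0}(z))$ and $\widehat{\psi}^{(j_0,l_0,n)}(z)$ is polynomial for $j_0=1,2$ while $z^{l_0}\widehat{\psi}^{(j_0,l_0,n)}(z)$ is polynomial for $j_0=3,4$, a short case analysis (combined with the reciprocal identity (\ref{formula_HR_1})) reduces the polynomiality of the first summand to the divisibility
\[
j_0=1,2:\ p^{(j_0)}_{l_0}(z)\mid (q^{(j_0)}_{l_0})'(z),\qquad
j_0=3,4:\ z^{l_0}p^{(j_0)}_{l_0}(z)\mid (q^{(j_0)}_{l_0})'(z).
\]

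The second step is to verify that every element of the span (\ref{qpi_0}) satisfies the required divisibility. This is the content of Lemma \ref{lem_XP04_Cfactor}, applied with a parameter twist. For example, for $j_0=1$, substituting $(\alpha,\beta)\to(-\alpha-1,-\beta+1)$ in (\ref{XP04_Cfactor}) and using (\ref{formula_HR_1}) gives
\[
(P^{(4,l_0,n)}(z;-\alpha-1,-\beta+1))'\propto p^{(1)}_{l_0}(z)\,P_n(z;-\alpha,-\beta),
\]
so the derivative of any linear combination in the span is divisible by $p^{(1)}_{l_0}(z)$; the other three cases are analogous.

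Once polynomiality is established, degree counting shows $\cM[q\widehat{\psi}^{(j_0,l_0,n)}]$ has degree at most $n+d_{j_0,l_0}$, and expanding in $\{P_j(z;\alpha+1,\beta-1)\}$ yields (\ref{expan_0}). For the bottom case $d_{j_0,l_0}=l_0+1$, the divisibility forces $(q^{(j_0)}_{l_0})'$ to be a scalar multiple of the relevant degree-$l_0$ polynomial, and integration (using (\ref{int_P}) together with (\ref{formula_HR_1}) when $j_0=3,4$) gives exactly the formulas (\ref{qpi}); consistency with the span is confirmed because $P^{(4,l_0,0)}(z;-\alpha-1,-\beta+1)\propto P_{l_0+1}(z;\alpha-1,\beta)$ (from (\ref{XP04}) with $n=0$) matches $\int p^{(1)}_{l_0}(z)\,dz$ via (\ref{int_P}).

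The main obstacle is the non-vanishing of the leading coefficient $c_{n,d_{j_0,l_0}}$. A priori the leading terms of the two Leibniz summands could cancel, so one must compute the top-degree coefficients of $z(1-z)h(z)\widehat{\psi}^{(j_0,l_0,n)}(z)$ (with $h=(q^{(j_0)}_{l_0})'/p^{(j_0)}_{l_0}$, suitably modified for $j_0=3,4$) and of $\Xi_n^{(j_0,l_0)}q^{(j_0)}_{l_0}(z)P_n(z;\alpha+1,\beta-1)$ and verify their sum is nonzero. I expect this to reduce, after using the explicit leading coefficients of the HR polynomials in (\ref{HR_P}), to a rational function of $n,l_0,\alpha,\beta$ that has no generic cancellation; the parameter $\Xi_n^{(j_0,l_0)}=-(n-\theta_{l_0}^{(j_0)})(n+\alpha+1)$ provides the room for non-cancellation away from the degenerate index sets $\mathbb{Z}_{\ge 0}^{(j_0,l_0)}$ already excluded in the exceptional construction.
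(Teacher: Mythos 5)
Your proposal follows essentially the same route as the paper: the same Leibniz splitting of $\cM[q^{(j_0)}_{l_0}\widehat{\psi}^{(j_0,l_0,n)}]$ into $\Xi_n^{(j_0,l_0)}q^{(j_0)}_{l_0}(z)P_n(z;\alpha+1,\beta-1)$ plus $\epsilon(z)A_1(z)(q^{(j_0)}_{l_0})'(z)\widehat{\psi}^{(j_0,l_0,n)}(z)$ (the paper's $\pi^{(j_0)}_{l_0}(z)P^{(j_0,l_0,n)}(z)$), the same reduction of polynomiality to divisibility of $(q^{(j_0)}_{l_0})'$ by $p^{(j_0)}_{l_0}$ (resp.\ $z^{l_0}p^{(j_0)}_{l_0}$) via Lemma \ref{lem_XP04_Cfactor} with twisted parameters, and the same degree count and antidifferentiation yielding (\ref{qpi}). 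The non-cancellation of the two top-degree contributions, which you flag as the main remaining obstacle, is a point the paper simply asserts when it states the degree of $\cM[q^{(j_0)}_{l_0}\widehat{\psi}^{(j_0,l_0,n)}]$, so both treatments leave that detail at essentially the same level of rigor.
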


\begin{proof}
Given a polynomial $q^{(j_0)}_{l_0}(z)$ and define 
\begin{align}
\label{pi_0}
\pi^{(j_0)}_{l_0}(z)=
\begin{cases}
\epsilon(z)A_1(z)(q^{(j_0)}_{l_0}(z))', & j_0=1,2, \\
\epsilon(z)A_1(z)(q^{(j_0)}_{l_0}(z))'z^{-l_0}, & j_0=3,4
\end{cases}
\end{align}
then it follows from (\ref{back_2_expr}) and (\ref{Mpsi_0}) that 
\begin{align}
\label{MqP_1}
\cM[q^{(j_0)}_{l_0}(z)\widehat{\psi}^{(j_0,l_0,n)}(z)]
&=\Xi_n^{(j_0,l_0)}q^{(j_0)}_{l_0}(z)P_n(z; \alpha+1,\beta-1) 
+\pi^{(j_0)}_{l_0}(z)P^{(j_0,l_0,n)}(z).
\end{align}
Since $\deg{P^{(4,l_0,n)}(z)}=n+l_0+1$, 
if $q^{(j_0)}_{l_0}(z)$ is defined by (\ref{qpi_0}), then it is easily seen that $\deg{q^{(j_0)}_{l_0}(z)}\geq l_0+1$. 
And it follows from lemma \ref{lem_XP04_Cfactor} 
and lemma \ref{lem_quasi} that $\pi^{(j_0)}_{l_0}(z)$ is a polynomial of degree $d_{j_0,l_0}-l_0+\delta_{1,j}-\delta_{4,j}$. 
Therefore, $\cM[q^{(j_0)}_{l_0}(z)\widehat{\psi}^{(j_0,l_0,n)}(z)]$ is a polynomial of degree $n+d_{j_0,l_0}$, 
which leads to (\ref{expan_0}). 

Moreover, if $q^{(j_0)}_{l_0}(z)$ is defined by (\ref{qpi}), 
and let $C=1$ without loss of generality, then 
\begin{align}
\label{pi}
\pi^{(j_0)}_{l_0}(z)=\pi^{(j_0)}(z)=\dfrac{A_1(z)}{Q^{(j_0)}(z)}=
\begin{cases}
z(1-z), & j_0=1, \\
z, & j_0=2, \\
1-z, & j_0=3, \\
-1, & j_0=4, 
\end{cases}
\end{align}
hence $\cM[q^{(j_0)}_{l_0}(z)\widehat{\psi}^{(j_0,l_0,n)}(z)]$ is a polynomial of degree $n+l_0+1$. 
\end{proof}



Let $C=1$, for the lowest degree $d_{j_0,l_0}=l_0+1$, 
it follows from (\ref{int_P}) that the polynomials $q^{(j_0)}_{l_0}(z)$ defined by (\ref{qpi}) can be chosen as follows: 
\begin{align}
\label{q_jl}
q^{(j_0)}_{l_0}(z)=
\begin{cases}
\dfrac{1}{(l_0+1)}\left(P_{l_0+1}(z; \alpha-1,\beta)-P_{l_0+1}(0; \alpha-1,\beta)\right), & j_0=1, \\ 
\dfrac{1}{(l_0+1)}\left(P_{l_0+1}(z; -\beta-1,-\alpha)-P_{l_0+1}(0; -\beta-1,-\alpha)\right), & j_0=2, \\ 
\dfrac{(\beta)_{l_0}}{(l_0+1)(\alpha+1)_{l_0}}\left(P_{l_0+1}(z; \beta-2,\alpha+1)-P_{l_0+1}(0; \beta-2,\alpha+1)
\right), & j_0=3, \\ 
\dfrac{(-\alpha)_{l_0}}{(l_0+1)(-\beta+1)_{l_0}}\left(P_{l_0+1}(z; -\alpha-2,-\beta+1)-P_{l_0+1}(0; -\alpha-2,-\beta+1)
\right), & j_0=4.
\end{cases}
\end{align}
and $z\mid q^{(j_0)}_{l_0}(z)$, $j_0=1,2,3,4$. 

Unless claimed in specific, in what follows, we always assume that $q^{(j_0)}_{l_0}(z)$ is defined by (\ref{q_jl}). 
In this case, there exist constants $c_{n,j}$ ($=c_{n,j}^{(j_0,l_0)}$), $j=0,1,\ldots,n+l_0+1$, such that 
\begin{align}
\label{expan_0}
\cM[q^{(j_0)}_{l_0}(z)\widehat{\psi}^{(j_0,l_0,n)}(z)]
=\sum^{n+l_0+1}_{j=0}c_{n,j}P_j(z; \alpha+1,\beta-1), 
\quad c_{n, n+l_0+1}\neq 0. 
\end{align}
Moreover, for $0\leq m\leq n+l_0+1$, 
it follows from (\ref{Biorth_HR}) and (\ref{expan_0}) that 
\begin{align}
\label{c_nm}
c_{n,m}=\frac{1}{h_m^{\alpha+1,\beta-1}}
\langle w(z; \alpha+1,\beta-1)\cM[q^{(j_0)}_{l_0}(z)\widehat{\psi}^{(j_0,l_0,n)}(z)], Q_m(z; \alpha+1,\beta-1)
\rangle, 
\end{align}
which can also be written as
\begin{align*}
c_{n,m}
&=\frac{1}{h_m^{\alpha+1,\beta-1}}
\langle P_n(z), 
\overline{\cF^{\ast}}[\overline{q^{(j_0)}_{l_0}(z)}
\overline{\cM^{\ast}}[\overline{w(z;\alpha+1,\beta-1)}Q_m(z; \alpha+1,\beta-1)]] \rangle. 
\end{align*}
By introducing the function 
\begin{align}
\label{mQ}
\tilde{Q}^{(j_0,l_0)}_m(z):=
\overline{\cF^{\ast}}[q^{(j_0)}_{l_0}(z^{-1})
\overline{\cM^{\ast}}[\overline{w(z; \alpha+1,\beta-1)}Q_m(z; \alpha+1,\beta-1)]]/\overline{w(z)}, 
\end{align}
the constant $c_{n,m}$ can be expressed in terms of a more compact inner product: 
\begin{align}
\label{c_nm_0}
c_{n,m}
=\frac{1}{h_m^{\alpha+1,\beta-1}}
\langle w(z)P_n(z), \tilde{Q}^{(j_0,l_0)}_m(z) \rangle. 
\end{align}

With the help of lemma \ref{lem_Mpsi}, lemma \ref{lem_Mq} and lemma \ref{lem_qpi}, we can prove that 
the exceptional HR polynomials $\{P^{(j_0,l_0,n)}(z)\}$ satisfy the recurrence relation of the shape 
\[
q^{(j_0)}_{l_0}(z)\sum^{k}_{l=0}a_n^{(j_0,l_0,l)}P^{(j_0,l_0,n)}(z)
=\sum^{n+l_0+1}_{j=m+1}b_n^{(j_0,l_0,j)}P^{(j_0,l_0,j)}(z)
\]
by showing that there exist coefficients $a_n^{(j_0,l_0,l)}, b_n^{(j_0,l_0,j)}$ such that, 
\[
\sum^{k}_{l=0}a_n^{(j_0,l_0,l)}\cM[q^{(j_0)}_{l_0}(z)\widehat{\psi}^{(j_0,l_0,n)}(z)]
=\sum^{n+l_0+1}_{j=m+1}b_n^{(j_0,l_0,j)}\cM[\widehat{\psi}^{(j_0,l_0,j)}(z)]. 
\]

\begin{lemma}
\label{lem_XHR_RR_0}
Given a positive integer $k(\leq n)$, 
if there exist constants $a_0^{(j_0,l_0,n)}, \ldots, a_k^{(j_0,l_0,n)}$, which are not identically zero, 
such that 
\begin{align}
\label{cond_X1P_RR_type1_0}
\sum^{k}_{l=0}a_l^{(j_0,l_0,n)}c_{n-l,j}=0, \quad j=0, \ldots, m, \quad m\leq n+l_0-k+1, 
\end{align}
or $n+l_0-k+2\leq m\leq n+l_0+1$, 
\begin{align}
\label{cond_X1P_RR_type1_1}
\begin{cases}
\sum^{k}_{l=0}a_l^{(j_0,l_0,n)}c_{n-l,j}=0, & j=0, \ldots, n+l_0-k+1,  \\
\sum^{n+l_0+1-j}_{l=0}a_l^{(j_0,l_0,n)}c_{n-l,j}, & j=n+l_0-k+2, \ldots,m, 
\end{cases}
\end{align}
then the following recurrence relation holds: 
\begin{align}
\label{X1P_RR_type1_0}
q^{(j_0)}_{l_0}(z)\sum^{k}_{l=0}a_l^{(j_0,l_0,n)}P^{(j_0,l_0,n-l)}(z) 
=\sum^{n+l_0+1}_{j=m+1}b_j^{(j_0,l_0,n)}P^{(j_0,l_0,j)}(z),
\end{align}
where $b_j^{(j_0,l_0,n)}$, $j=m+1,\ldots,n+l_0+1$, are constants. 
\end{lemma}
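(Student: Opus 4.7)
The plan is to apply the backward operator $\cM$ to both sides of the proposed recurrence and use Lemmas \ref{lem_Mpsi}, \ref{lem_Mq}, and \ref{lem_qpi} to reduce the identity to the combinatorial conditions on $\{c_{n,j}\}$ supplied by the hypothesis.

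First I would apply $\cM$ to the left-hand side and invoke (\ref{expan_0}) on each summand, obtaining
\[
\cM\!\left[q^{(j_0)}_{l_0}(z)\sum_{l=0}^{k}a_l^{(j_0,l_0,n)}\widehat\psi^{(j_0,l_0,n-l)}(z)\right]
=\sum_{l=0}^{k}a_l^{(j_0,l_0,n)}\sum_{j=0}^{n-l+l_0+1}c_{n-l,j}P_j(z;\alpha+1,\beta-1).
\]
Interchanging the order of summation, the coefficient of $P_j(z;\alpha+1,\beta-1)$ in this double sum equals $\sum_{l=0}^{k}a_l^{(j_0,l_0,n)}c_{n-l,j}$ whenever $j\le n+l_0-k+1$ (all values of $l$ contribute), and equals the truncated sum $\sum_{l=0}^{n+l_0+1-j}a_l^{(j_0,l_0,n)}c_{n-l,j}$ when $j>n+l_0-k+1$ (those $l$ with $n-l+l_0+1<j$ drop out because the corresponding $P_j$ does not appear in (\ref{expan_0})). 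Hypotheses (\ref{cond_X1P_RR_type1_0}) and (\ref{cond_X1P_RR_type1_1}) are precisely the assertion that each of these coefficients vanishes for $j=0,\ldots,m$, so $\cM[\text{LHS}]$ is a linear combination of $P_j(z;\alpha+1,\beta-1)$ only for $j=m+1,\ldots,n+l_0+1$.

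Next I would define $b_j^{(j_0,l_0,n)}$ for $j=m+1,\ldots,n+l_0+1$ by matching coefficients. Since Lemma \ref{lem_Mpsi} gives $\cM[\widehat\psi^{(j_0,l_0,j)}(z)]=\Xi_j^{(j_0,l_0)}P_j(z;\alpha+1,\beta-1)$ with $\Xi_j^{(j_0,l_0)}=-(j-\theta_{l_0}^{(j_0)})(j+\alpha+1)$ nonzero for every $j$ in the admissible index range (the indices $j=\theta_{l_0}^{(j_0)}$ where $\Xi_j^{(j_0,l_0)}$ vanishes lie outside $\mathbb{Z}_{\geq 0}^{(j_0,l_0)}$), the choice $b_j^{(j_0,l_0,n)}=(\text{coefficient of }P_j\text{ in }\cM[\text{LHS}])/\Xi_j^{(j_0,l_0)}$ ensures that $\cM$ of the proposed RHS (in $\widehat\psi$ form) equals $\cM[\text{LHS}]$. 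In the case $j_0=1$ with $l_0\in\{m+1,\ldots,n+l_0+1\}$, the term $b_{l_0}\widehat\psi^{(1,l_0,l_0)}$ vanishes identically by (\ref{Mpsi_1}), so whatever value is assigned to $b_{l_0}$ is immaterial.

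Setting $F(z)$ equal to the difference between the proposed LHS and RHS in the $\widehat\psi$ normalization, we then have $\cM[F]=0$ and $F$ is a Laurent polynomial with poles only at $z=0$. Lemma \ref{lem_Mq} together with the explicit kernel description (\ref{Ker_M}) and the identities (\ref{Mpsi_1}) reduces $F$ either to zero (for $j_0=1,2,3$) or to a constant multiple of $z^{-l_0}$ (for $j_0=4$). In the latter case I would use $z\mid q^{(4)}_{l_0}(z)$, which follows from (\ref{q_jl}), to see that the $\widehat\psi$-form LHS has pole order at most $l_0-1$ at $z=0$, whereas a nonzero multiple of $z^{-l_0}$ would contribute pole order exactly $l_0$; equating the $z^{-l_0}$ coefficients therefore forces that multiple to vanish. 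Hence $F=0$, and multiplying through by $z^{l_0}$ when $j_0=3,4$ converts $\widehat\psi^{(j_0,l_0,\cdot)}$ back to $P^{(j_0,l_0,\cdot)}$ and produces (\ref{X1P_RR_type1_0}).

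The main obstacle I foresee is this last step: ruling out a kernel contribution $cz^{-l_0}$ in the case $j_0=4$ by careful pole-order analysis at $z=0$. A secondary source of care is the bookkeeping in Step~1, where the two-case split in the hypothesis arises naturally from whether the index $j$ in the $P_j(z;\alpha+1,\beta-1)$ expansion exceeds the largest index $n-l+l_0+1$ produced by the summand with $l=k$.
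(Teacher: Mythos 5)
Your proposal follows the same route as the paper's proof: apply $\cM$ to $q^{(j_0)}_{l_0}(z)\sum_l a_l^{(j_0,l_0,n)}\widehat\psi^{(j_0,l_0,n-l)}(z)$, expand each summand via (\ref{expan_0}), interchange the sums so that the coefficient of $P_j(z;\alpha+1,\beta-1)$ is exactly the two-case expression (\ref{eq_b_0}), observe that the hypotheses (\ref{cond_X1P_RR_type1_0})--(\ref{cond_X1P_RR_type1_1}) annihilate these coefficients for $j\le m$, and then invert $\cM$ by appealing to Lemmas \ref{lem_Mpsi}, \ref{lem_Mq} and \ref{lem_qpi}. That is precisely the paper's argument; the paper stops at the bare citation of the three lemmas, whereas you try to make the inversion explicit, and that is where the one weak point sits.

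For $j_0=4$ you rightly note that $\mathrm{Ker}\,\cM$ contains the genuine Laurent polynomial $z^{-l_0}=\widehat\psi^{(4,l_0,-l_0-1)}(z)$, so that $F=\mathrm{LHS}-\mathrm{RHS}=cz^{-l_0}$ a priori. However, your pole-order argument does not force $c=0$. It is true that $z\mid q^{(4)}_{l_0}(z)$ gives the left-hand side pole order at most $l_0-1$ at $z=0$; but each $\widehat\psi^{(4,l_0,j)}(z)=z^{-l_0}P^{(4,l_0,j)}(z)$ appearing on the right-hand side has pole order exactly $l_0$, since by (\ref{XP04}) one has $P^{(4,l_0,j)}(0)=\tfrac{(-\alpha)_{l_0}}{(-\beta+1)_{l_0}}(\alpha+1)P_{l_0+1}(0;-\alpha-2,-\beta+1)P_j(0)\neq0$ generically. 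Comparing the $z^{-l_0}$ coefficients therefore yields only $c=-\sum_j b_j^{(j_0,l_0,n)}P^{(4,l_0,j)}(0)$, i.e.\ it determines $c$ in terms of the $b_j$'s rather than showing that it vanishes. To rule out the kernel contribution one needs an independent input, for instance pairing the resulting polynomial identity against $Q^{(4,l_0,-l_0-1)}(z)=1$ with the weight $w^{(4,l_0)}(z)$ from (\ref{XW1}), or otherwise establishing $\sum_j b_j^{(j_0,l_0,n)}P^{(4,l_0,j)}(0)=0$. To be fair, the paper's own proof is silent on this point as well (it is absorbed into the appeal to Lemma \ref{lem_Mq}), so your write-up agrees with the paper everywhere the paper is explicit; the flaw is confined to the supplementary step you added, which as stated does not close the gap it was meant to close.
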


\begin{proof}
First, for arbitrary constants $a_0^{(j_0,l_0,n)}, \ldots, a_k^{(j_0,l_0,n)}$, it follows from (\ref{expan_0}) that 
\begin{align*}
&
\cM[q^{(j_0)}_{l_0}(z)\sum^{k}_{l=0}a_l^{(j_0,l_0,n)}\widehat{\psi}^{(j_0,l_0,n-l)}(z)] 
=\sum^{k}_{l=0}a_l^{(j_0,l_0,n)}\sum^{n-l+l_0+1}_{j=0}c_{n-l,j}P_j(z; \alpha+1,\beta-1) \\
&
=\sum^{n+l_0+1}_{j=0}\tilde{b}_j^{(j_0,l_0,n)}P_j(z; \alpha+1,\beta-1), 
\end{align*}
where 
\begin{align}
\label{eq_b_0}
\tilde{b}_j^{(j_0,l_0,n)}=
\begin{cases}
\sum^{k}_{l=0}a_l^{(j_0,l_0,n)}c_{n-l,j}, & j=0, \ldots, n+l_0-k+1, \\
\sum^{n+l_0+1-j}_{l=0}a_l^{(j_0,l_0,n)}c_{n-l,j}, & j=n+l_0-k+2, \ldots, n+l_0+1.
\end{cases}
\end{align}
Then the condition (\ref{cond_X1P_RR_type1_0}) or (\ref{cond_X1P_RR_type1_1}) implies that 
\begin{align}
\label{eq_b}
\tilde{b}_j^{(j_0,l_0,n)}=0, \quad j=0,\ldots,m, 
\end{align}
hence 
\[
\cM[q^{(j_0)}_{l_0}(z)\sum^{k}_{l=0}a_l^{(j_0,l_0,n)}\widehat{\psi}^{(j_0,l_0,n-l)}(z)] 
=\sum^{n+l_0+1}_{j=m+1}\tilde{b}_n^{(j)}P_j(z; \alpha+1,\beta-1),
\]
which implies that there exist constants $b_{m+1}^{(j_0,l_0,n)}, \ldots$, $b_{n+l_0+1}^{(j_0,l_0,n)}$, such that 
(\ref{X1P_RR_type1_0}) holds in view of lemma \ref{lem_Mpsi}, lemma \ref{lem_Mq} and lemma \ref{lem_qpi}. 
\end{proof}

\noindent
{\textbf{Proof of Theorem 1.1}

It follows from Lemma \ref{lem_XHR_RR_0} that in either case 
(whether the condition (\ref{cond_X1P_RR_type1_0}) or (\ref{cond_X1P_RR_type1_1}) is satisfied or not) 
the recurrence relation 
\begin{align*}
q^{(j_0)}_{l_0}(z)\sum^{k}_{l=0}a_l^{(j_0,l_0,n)}P^{(j_0,l_0,n-l)}(z) 
=\sum^{n+l_0+1}_{j=0}b_j^{(j_0,l_0,n)}P^{(j_0,l_0,j)}(z),
\end{align*}
holds.

\noindent
{\textbf{Proof of Theorem 1.2}

Lemma \ref{lem_XHR_RR_0} combined with Proposition \ref{prop_Coe_a} complete the proof of 
Theorem 1.2. 

\begin{prop}
\label{prop_Coe_a}
For any integer $n\geq 2l_0+1$, 
there exist constants 
$a_l^{(j_0,l_0,n)}$, $l=0,1,\ldots,l_0+1$, 
such that 
\begin{align}
\label{cond_a_c}
\sum^{l_0+1}_{l=0} a_l^{(j_0,l_0,n)}c_{n-l,m}=0,  \quad 0\leq m\leq n-l_0-1.
\end{align}
More precisely, if we let $a_0^{(j_0,l_0,n)}=1$, then the following 
\begin{align}
\label{expre_Coe_a_12}
a_l^{(j_0,l_0,n)}&=C_{n,l_0+1}^{(l)}, \quad l=1,\ldots, l_0+1, \quad j_0=1,2; \\
\label{expre_Coe_a_34}
a_l^{(j_0,l_0,n)}&=C_{n,l_0+1}^{(l)}(\alpha+1,\beta-1)\frac{\Xi_{n}^{(j_0,l_0)}}{\Xi_{n-l}^{(j_0,l_0)}}, 
\quad l=1,\ldots, l_0+1, \quad j_0=3,4,
\end{align}
is a set of solutions to (\ref{cond_a_c}), 
where $C_{n,l_0+1}^{(l)}$, $ l=1, \ldots, l_0+1$, are defined by (\ref{twisted_HR_P_j_coe}) and (\ref{twisted_HR_P_j_coe2}), the definition of $\Xi_{n}^{(j_0,l_0)}$ refers to lemma \ref{lem_Mpsi}. 
\end{prop}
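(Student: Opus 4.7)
The plan is to verify directly that the proposed formulas (\ref{expre_Coe_a_12})--(\ref{expre_Coe_a_34}) satisfy the linear system (\ref{cond_a_c}) for $0\le m\le n-l_0-1$. The cornerstone is the identity (\ref{MqP_1}),
\begin{align*}
\cM[q^{(j_0)}_{l_0}(z)\widehat{\psi}^{(j_0,l_0,n-l)}(z)]
=\Xi_{n-l}^{(j_0,l_0)}q^{(j_0)}_{l_0}(z)P_{n-l}(z;\alpha+1,\beta-1)
+\pi^{(j_0)}(z)P^{(j_0,l_0,n-l)}(z),
\end{align*}
which identifies $c_{n-l,m}$ with the coefficient of $P_m(z;\alpha+1,\beta-1)$ on the right-hand side. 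Consequently (\ref{cond_a_c}) is equivalent to the statement that the polynomial
\begin{align*}
q^{(j_0)}_{l_0}(z)\sum_{l=0}^{l_0+1}a_l^{(j_0,l_0,n)}\Xi_{n-l}^{(j_0,l_0)}P_{n-l}(z;\alpha+1,\beta-1)
+\pi^{(j_0)}(z)\sum_{l=0}^{l_0+1}a_l^{(j_0,l_0,n)}P^{(j_0,l_0,n-l)}(z)
\end{align*}
lies in $\mathrm{span}\{P_j(z;\alpha+1,\beta-1):n-l_0\le j\le n+l_0+1\}$.

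For $j_0\in\{3,4\}$, the normalization (\ref{expre_Coe_a_34}) is rigged so that $a_l^{(j_0,l_0,n)}\Xi_{n-l}^{(j_0,l_0)}=\Xi_n^{(j_0,l_0)}C_{n,l_0+1}^{(l)}(\alpha+1,\beta-1)$, and the twisted-parameter version of (\ref{sol_a_eq}) collapses the first sum to $\Xi_n^{(j_0,l_0)}q^{(j_0)}_{l_0}(z)P_n(z;\alpha+l_0+2,\beta-l_0-2)$, which belongs to the required span by Corollary~\ref{cor_q_span} applied with the parameters $(\alpha+1,\beta-1)$, because $\deg q^{(j_0)}_{l_0}=l_0+1$ and $z\mid q^{(j_0)}_{l_0}(z)$. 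The main obstacle is then showing that the residual $\pi^{(j_0)}(z)\sum_l a_l^{(j_0,l_0,n)}P^{(j_0,l_0,n-l)}(z)$ also lies in this span; my plan is to rewrite each summand using $\pi^{(j_0)}(z)P^{(j_0,l_0,n-l)}(z)=\epsilon(z)A_1(z)(q^{(j_0)}_{l_0})'(z)\widehat{\psi}^{(j_0,l_0,n-l)}(z)$ (implicit in (\ref{pi_0})), peel off the $z^{l_0}$-prefactor from (\ref{hphi_to_X1P}), and use the common polynomial factor revealed by Lemma~\ref{lem_XP04_Cfactor} to recast the residual as yet another $q^{(j_0)}_{l_0}\times(\text{twisted HR combination})$-shape that Corollary~\ref{cor_q_span} again controls.

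For $j_0\in\{1,2\}$, the formula (\ref{expre_Coe_a_12}) carries no $\Xi_{n-l}$-factor, and by (\ref{sol_a_eq}) one has the telescoping $\sum_{l=0}^{l_0+1}a_l^{(j_0,l_0,n)}P_{n-l}(z)=P_n(z;\alpha+l_0+1,\beta-l_0-1)$. Substituting into the compact inner-product representation (\ref{c_nm_0}), using the weight identity $w(z)=(-z)^{-l_0-1}w(z;\alpha+l_0+1,\beta-l_0-1)$, and sliding the monomial factor into the second slot via $\langle q(z)f,g\rangle=\langle f,q(z^{-1})g\rangle$ on the unit circle, the condition (\ref{cond_a_c}) reduces to showing that $(-z)^{l_0+1}\tilde{Q}_m^{(j_0,l_0)}(z)$ is a polynomial of degree at most $n-1$, expressible in $\{Q_j(z;\alpha+l_0+1,\beta-l_0-1)\}_{j<n}$, whereupon the twisted-parameter biorthogonality (\ref{Biorth_HR}) annihilates the inner product. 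The hypothesis $n\ge 2l_0+1$ is exactly what aligns the orthogonality window with the relevant degree bound. The delicate remaining step is the degree/support analysis of $\tilde{Q}_m^{(j_0,l_0)}(z)$ through the chain of operators $\overline{\cF^*},\,q^{(j_0)}_{l_0}(z^{-1}),\,\overline{\cM^*}$ appearing in (\ref{mQ}); this, together with the $\pi^{(j_0)}$-residual analysis in the $j_0\in\{3,4\}$ case, is the main technical hurdle.
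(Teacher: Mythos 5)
Your overall architecture coincides with the paper's: the same reduction via \eqref{MqP_1} and Corollary \ref{cor_q_span} for $j_0=3,4$, and the same inner-product route through \eqref{c_nm_0} and the support of $\tilde{Q}^{(j_0,l_0)}_m$ for $j_0=1,2$. The first halves of both prongs (collapsing $\sum_l a_l\Xi_{n-l}P_{n-l}(z;\alpha+1,\beta-1)$ to a single twisted HR polynomial, and the telescoping $\sum_l a_l P_{n-l}(z)=P_n(z;\alpha+l_0+1,\beta-l_0-1)$) are correct and match the paper. However, both of the decisive technical steps are left as plans, and the plan for the $j_0=3,4$ residual is not workable as stated. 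You claim that $\pi^{(j_0)}(z)\sum_l a_l P^{(j_0,l_0,n-l)}(z)$ can be recast as ``another $q^{(j_0)}_{l_0}\times(\text{twisted HR combination})$-shape that Corollary \ref{cor_q_span} again controls.'' That corollary requires the freedom to choose the coefficients of the HR combination so that it equals $P_n(z;\alpha+l_0+1,\beta-l_0-1)$; here the coefficients $a_l$ are already pinned down by the first sum, and the polynomial prefactor that actually emerges from $\pi^{(j_0)}(z)P^{(j_0,l_0,n)}(z)$ is an $n$-dependent degree-$(l_0+1)$ polynomial $q^{(j_0)}_{n,l_0+1}(z)$, not $q^{(j_0)}_{l_0}(z)$. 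The paper handles this with a dedicated lemma (equation \eqref{cond_pi_P_0}), proved by explicit contiguous-relation manipulations using \eqref{form_HR_1}, \eqref{form_HR_2}, \eqref{twisted_HR_1} and two $P_{l_0}$-identities, and the span membership of the leftover piece is supplied by Corollary \ref{cor_expan_k} --- the statement designed precisely for an \emph{arbitrary} monic prefactor --- not by Corollary \ref{cor_q_span}. Lemma \ref{lem_XP04_Cfactor}, which you invoke here, is used in the paper only to show that $\pi^{(j_0)}_{l_0}(z)$ is a polynomial (inside Lemma \ref{lem_qpi}); it plays no role in the residual analysis. Your sketch also does not explain why this step succeeds only for $j_0=3,4$ and fails for $j_0=1,2$, which is exactly what forces the case split in the proposition.

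For $j_0=1,2$, the step you defer --- showing that $z^{l_0+1}\tilde{Q}^{(j_0,l_0)}_m(z)$ is a polynomial of degree $m+l_0\le n-1$ --- is the entire content of the paper's Lemma \ref{mQ_span}. It is obtained by commuting $\overline{\cF^{\ast}}$ past multiplication by $q^{(j_0)}_{l_0}(z^{-1})$, applying \eqref{haFM_0} and the closed form \eqref{hL2psi} of $\widehat{L}_2^{\ast}\widehat{\psi^{\ast}}$, and arriving at the explicit expression \eqref{mQ_exp} of $\tilde{Q}^{(j_0,l_0)}_m(z)$ as $\theta^{(j_0,l_0)}_m q^{(j_0)}_{l_0}(z^{-1})Q_m(z)$ plus a multiple of $z^{-l_0-1}(z-1)Q^{(j_0,l_0,m)}(z)/Q^{(j_0)}(z)$; the Laurent support then follows from $z\mid q^{(j_0)}_{l_0}(z)$ and the degree count \eqref{deg_XQ0}. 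Without this computation the $j_0=1,2$ case is not established, so the proposal, while correctly aimed, is incomplete at both of its load-bearing points.
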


In what follows, we will prove this proposition through two different approaches, 
each of which solves the coefficients $a_l^{(j_0,l_0,n)}$, $l=0,1,\ldots,l_0+1$, in (\ref{cond_a_c}) partially. 

\noindent
{\textbf{Proof of (\ref{expre_Coe_a_34}) in Proposition \ref{prop_Coe_a}}: }

On one hand, if we use the $c_{n,m}$'s defined by (\ref{c_nm}), then it suffices to show that 
\[
\sum^{l_0+1}_{l=0}a_l^{(j_0,l_0,n)}\cM[q^{(j_0)}_{l_0}(z)\widehat{\psi}^{(j_0,l_0,n-l)}(z)]\in
\text{span}\{P_{n+l_0+1}(z;\alpha+1,\beta-1), \ldots, P_{n-l_0}(z;\alpha+1,\beta-1)\}. 
\]
It turns out that there exist constants 
$\tilde{a}_0^{(j_0,l_0,n)} (=1), \tilde{a}_1^{(j_0,l_0,n)}, \ldots, \tilde{a}_{l_0+1}^{(j_0,l_0,n)}$ such that 
\begin{align*}
&
q^{(j_0)}_{l_0}(z)\sum^{l_0+1}_{l=0} \tilde{a}_l^{(j_0,l_0,n)}\Xi_{n-l}^{(j_0,l_0)}P_{n-l}(z;\alpha+1,\beta-1) \\
&\quad
\in\textup{span}\{P_{n+l_0+1}(z;\alpha+1,\beta-1), \cdots, P_{n-l_0}(z;\alpha+1,\beta-1)\}. 
\end{align*}
In fact, since $z\mid q^{(j_0)}_{l_0}(z)$ and $\deg{q^{(j_0)}_{l_0}(z)}=l_0+1$, 
it follows from Corollary \ref{cor_q_span} that
\[
\tilde{a}_l^{(j_0,l_0,n)}(\alpha-1,\beta+1)\Xi_{n-l}^{(j_0,l_0)}(\alpha-1,\beta+1)
=C_{n,l_0+1}^{(l)}\Xi_{n}^{(j_0,l_0)}(\alpha-1,\beta+1), \quad l=1,\ldots, l_0+1, 
\]
hence 
\begin{align}
\label{Coe_a_34}
\tilde{a}_l^{(j_0,l_0,n)}=C_{n,l_0+1}^{(l)}(\alpha+1,\beta-1)\frac{\Xi_{n}^{(j_0,l_0)}}{\Xi_{n-l}^{(j_0,l_0)}}, 
\quad l=0,\ldots, l_0+1, \quad j_0=1,2,3,4. 
\end{align}
Moreover, it turns out that for $j_0=3$ or $4$, the constants defined by (\ref{Coe_a_34}) ensure that 
\[
\pi^{(j_0)}(z)\sum^{l_0+1}_{l=0}\tilde{a}_l^{(j_0,l_0,n)}P^{(j_0,l_0,n-l)}(z)
\in\text{span}\{P_{n+l_0+1}(z;\alpha+1,\beta-1), \ldots, P_{n-l_0}(z;\alpha+1,\beta-1)\}, 
\]
while this does not hold for the case when $j_0=1$ or $2$. 
In fact, this statement follows from the following lemma. 

\begin{lemma}
For $j_0=3$ or $4$, there exist a polynomial $q^{(j_0)}_{n,l_0+1}(z)$ of degree $l_0+1$ 
and satisfying $z\mid q^{(j_0)}_{n,l_0+1}(z)$, and constants $e^{(j_0,l_0)}_{j}, j=n+1,\ldots,n+l_0+1$, such that 
\begin{align}
\label{cond_pi_P_0}
&
\pi^{(j_0)}(z)P^{(j_0,l_0,n)}(z) \\
&\quad
=q^{(j_0)}_{n,l_0+1}(z)\Xi_{n}^{(j_0,l_0)}P_{n}(z;\alpha+1,\beta-1)
+\sum^{n+l_0+1}_{j=n+1}e^{(j_0,l_0)}_{j}P_{j}(z;\alpha+1,\beta-1). \nonumber
\end{align}
\end{lemma}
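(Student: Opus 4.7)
The plan is to start from identity (\ref{MqP_1}) specialised to $q = q^{(j_0)}_{l_0}$ as in (\ref{q_jl}), which gives
\[
\pi^{(j_0)}(z)\,P^{(j_0,l_0,n)}(z)
=\cM[q^{(j_0)}_{l_0}(z)\widehat\psi^{(j_0,l_0,n)}(z)]-\Xi_n^{(j_0,l_0)}\,q^{(j_0)}_{l_0}(z)\,P_n(z;\alpha+1,\beta-1).
\]
By lemma \ref{lem_qpi}, the first term on the right is a polynomial of degree $n+l_0+1$ with expansion $\sum_{j=0}^{n+l_0+1}c_{n,j}P_j(z;\alpha+1,\beta-1)$. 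Writing $\tilde q(z):=q^{(j_0)}_{l_0}(z)+q^{(j_0)}_{n,l_0+1}(z)$, a polynomial of degree $l_0+1$ divisible by $z$, the desired identity (\ref{cond_pi_P_0}) is equivalent to
\[
\cM[q^{(j_0)}_{l_0}(z)\widehat\psi^{(j_0,l_0,n)}(z)]-\Xi_n^{(j_0,l_0)}\tilde q(z)P_n(z;\alpha+1,\beta-1)\in\textup{span}\{P_j(z;\alpha+1,\beta-1)\}_{j=n+1}^{n+l_0+1}.
\]
Pairing both sides with $w(z;\alpha+1,\beta-1)Q_m(z;\alpha+1,\beta-1)$ in the inner product (\ref{inner_0}) for $m=0,\dots,n$ reduces the problem to showing that an overdetermined system of $n+1$ linear conditions in the $l_0+1$ coefficients of $\tilde q$ is consistent, and to exhibiting an explicit solution.

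For $j_0=4$ the key tool is lemma \ref{lem_XP04_Cfactor}, which under the normalisation $(q^{(4)}_{l_0})'=z^{l_0}p^{(4)}_{l_0}$ from (\ref{qpi}) with $C=1$ reads $(P^{(4,l_0,n)})'(z)=-\Xi_n^{(4,l_0)}(q^{(4)}_{l_0})'(z)P_n(z;\alpha+1,\beta-1)$. I would integrate and apply integration by parts to write
\[
P^{(4,l_0,n)}(z)=-\Xi_n^{(4,l_0)}q^{(4)}_{l_0}(z)P_n(z;\alpha+1,\beta-1)+\Xi_n^{(4,l_0)}n\int q^{(4)}_{l_0}(z)P_{n-1}(z;\alpha+2,\beta-1)\,dz+\textup{const},
\]
then expand the remaining integral in the basis $\{P_j(z;\alpha+1,\beta-1)\}$ using (\ref{int_P}) and the analogue of corollary \ref{cor_q_span} for the shifted parameters $(\alpha+1,\beta-1)$. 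The contributions at indices $j\le n$ can be consolidated, by an appropriate choice of the integration constant and of $q^{(4)}_{n,l_0+1}$ (respecting $z\mid q^{(4)}_{n,l_0+1}$ thanks to $q^{(4)}_{l_0}(0)=0$), into a single term of shape $\Xi_n^{(4,l_0)}q^{(4)}_{n,l_0+1}(z)P_n(z;\alpha+1,\beta-1)$; the overflow then automatically lies in $\textup{span}\{P_{n+1},\dots,P_{n+l_0+1}\}$ with parameters $(\alpha+1,\beta-1)$.

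For $j_0=3$, with $\pi^{(3)}(z)=1-z$, no derivative identity analogous to (\ref{XP04_Cfactor}) is available, so I would work directly from the explicit form (\ref{XP03}), using (\ref{form_HR_2}) to rewrite $(1-z)P_{n-1}(z;\alpha+1,\beta)$ as a combination of $P_n(z)$ and $P_{n-1}(z)$, and the twisted-parameter identities of lemma \ref{twisted_HR} and lemma \ref{lem_expan_PQ} to re-express the various $P_k(z)$ that appear in terms of $P_j(z;\alpha+1,\beta-1)$. This presents $(1-z)P^{(3,l_0,n)}(z)$ as a polynomial in $z$ times $P_n(z;\alpha+1,\beta-1)$ plus a controlled remainder, after which the same consolidation argument as in the $j_0=4$ case pins down $q^{(3)}_{n,l_0+1}$ and the coefficients $e^{(3,l_0)}_j$. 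The principal obstacle is this $j_0=3$ case: without a clean first-order identity one must chain several twisted-parameter expansions in the right order, and verifying that the resulting $q^{(3)}_{n,l_0+1}(z)$ has degree exactly $l_0+1$ and satisfies $z\mid q^{(3)}_{n,l_0+1}$ requires careful tracking of leading and constant terms.
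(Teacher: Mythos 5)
There is a genuine gap, and it sits exactly where the real content of the lemma lies. You correctly frame the problem as an overdetermined one: the right-hand side of (\ref{cond_pi_P_0}) spans an affine subspace of dimension at most $2l_0+2$ inside the $(n+l_0+2)$-dimensional space of polynomials of degree $\le n+l_0+1$, so membership is a nontrivial condition for $n>l_0$. But your proof never discharges this condition. For $j_0=4$ the integration-by-parts step based on Lemma \ref{lem_XP04_Cfactor} is correct and is a genuinely different entry point from the paper (which manipulates the explicit form (\ref{XP04}) directly using (\ref{form_HR_1}) and the twisted-parameter identities); it even produces the leading term $\Xi_n^{(4,l_0)}q^{(4)}_{l_0}(z)P_n(z;\alpha+1,\beta-1)$ for free. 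However, everything then hinges on the claim that the remaining integral $\int q^{(4)}_{l_0}(z)P_{n-1}(z;\alpha+2,\beta-1)\,dz$, whose expansion in $\{P_j(z;\alpha+1,\beta-1)\}$ generically has components at all indices $j\le n$, ``can be consolidated \ldots into a single term of shape $\Xi_n^{(4,l_0)}q^{(4)}_{n,l_0+1}(z)P_n(z;\alpha+1,\beta-1)$.'' That consolidation is precisely the overdetermined system you identified at the outset, restated rather than solved: matching the $l_0+2$ components at indices $n-l_0-1,\dots,n$ with only $l_0+1$ coefficients of a polynomial $q$ with $z\mid q$ requires an argument, not an assertion. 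The same unproved step is then invoked again to finish the $j_0=3$ case.

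The paper closes exactly this gap with Corollary \ref{cor_expan_k}, which rests on Lemma \ref{lem_expan_P_n} (the recurrence $P_{n+k+1}=D_k(z)P_{n+1}+B_k(z)P_n$ with polynomial coefficients): after rewriting $\pi^{(j_0)}(z)P^{(j_0,l_0,n)}(z)$ via (\ref{form_HR_1}) and the twisted-parameter expansions, the residual term has precisely the shape $b_{n+1}(d_n-b_n)C_{l_0}(z)P_n(z;\alpha-1,\beta+1)+\tilde{\tilde{P}}(z)P_{n+1}(z)$ (after a parameter shift), and Corollary \ref{cor_expan_k} guarantees this lies in $\mathrm{span}\{P_{n+l_0+2}(z),\dots,P_{n+2}(z)\}$ for a suitable $\tilde{\tilde{P}}$ of degree $l_0+1$. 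Your proposal contains no counterpart to this mechanism. To repair it you would need either to prove the consolidation claim directly (e.g.\ by showing the relevant $(l_0+2)\times(l_0+1)$ linear system is consistent) or to route the residual term through Corollary \ref{cor_expan_k} as the paper does; without one of these, both cases of the lemma remain open in your argument.
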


\begin{proof}
First, let us consider the case $j_0=3$. It follows from (\ref{form_HR_1}), (\ref{form_HR_2}) 
and (\ref{twisted_HR_1}) that 
\begin{align*}
&
\pi^{(3)}(z)P^{(3,l_0,n)}(z)
\propto
\bigg[
z(1-z)P_{l_0}(z; \beta-1,\alpha+1)P'_{n}(z) 
+(\alpha+1)(1-z)P_{l_0}(z; \beta-1,\alpha+2)P_{n}(z) \bigg] \\
&
=\left[(\alpha+1)(1-z)P_{l_0}(z;\beta-1,\alpha+2)-nzP_{l_0}(z;\beta-1,\alpha+1)\right]P_n(z) \\
&\quad
+\frac{n(n+\alpha+\beta)}{n+\alpha}zP_{l_0}(z;\beta-1,\alpha+1)P_{n-1}(z)
\\
&
=\bigg[(\alpha+1)\bigg(P_{l_0}(z;\beta-1,\alpha+1)+\frac{l_0(1-z)}{l_0-1+\beta}P_{l_0-1}(z;\beta-1,\alpha+2)\bigg) \\
&\quad\quad
-(n+\alpha+1)zP_{l_0}(z;\beta-1,\alpha+1)\bigg]P_n(z)
+\frac{n(n+\alpha+\beta)}{n+\alpha}zP_{l_0}(z;\beta-1,\alpha+1)P_{n-1}(z)
\\
&
=\bigg[(\alpha+1)\bigg(P_{l_0}(z;\beta-1,\alpha+1)+\frac{l_0(1-z)}{l_0-1+\beta}P_{l_0-1}(z;\beta-1,\alpha+2)\bigg)\bigg]
P_n(z) \\
&\quad
-(n+\alpha+1)zP_{l_0}(z;\beta-1,\alpha+1)(P_{n}(z)+b_nP_{n-1}(z))
\\
&
=
-(n+\alpha+1)zP_{l_0}(z;\beta-1,\alpha+1)P_{n}(z;\alpha+1,\beta-1)
+\frac{(\alpha+1)(\beta-1)}{l_0-1+\beta}P_{l_0}(z;\beta-2,\alpha+2)P_{n}(z)  \\
&
=\left[-(n+\alpha+1)zP_{l_0}(z;\beta-1,\alpha+1)
+\frac{(\alpha+1)(\beta-1)}{l_0-1+\beta}P_{l_0}(z;\beta-2,\alpha+2)-\tilde{P}(z)
\right]P_{n}(z;\alpha+1,\beta-1) \\
&\quad
-\frac{(\alpha+1)(\beta-1)}{l_0-1+\beta}b_{n}P_{l_0}(z;\beta-2,\alpha+2)P_{n-1}(z)+\tilde{P}(z)P_{n}(z;\alpha+1,\beta-1), 
\end{align*}
where the relations 
\[
P_{l_0}(z;\beta-1,\alpha+2)=P_{l_0}(z;\beta-1,\alpha+1)+\frac{l_0}{l_0-1+\beta}P_{l_0-1}(z;\beta-1,\alpha+2), 
\]
\[
zP_{l_0-1}(z;\beta-1,\alpha+2)=P_{l_0}(z;\beta-1,\alpha+2)+\frac{\beta-1}{l_0}\left(
P_{l_0}(z,\beta-1,\alpha+2)-P_{l_0}(z,\beta-2,\alpha+2)\right), 
\]
were also used. 
Therefore, to prove (\ref{cond_pi_P_0}), it suffices to show that there exists a polynomial $\tilde{P}(z)$ 
of degree $l_0+1$, such that 
\begin{align*}
&
-\frac{(\alpha+1)(\beta-1)}{l_0-1+\beta}b_nP_{l_0}(z;\beta-2,\alpha+2)P_{n-1}(z)+\tilde{P}(z)P_{n}(z;\alpha+1,\beta-1) \\
&\quad
\in\text{span}\{P_{n+l_0+1}(z;\alpha+1,\beta-1), \ldots, P_{n+1}(z;\alpha+1,\beta-1)\}. 
\end{align*}
In fact, by multiplying the factor $\frac{l_0-1+\beta}{(n+1+\alpha)(n+2+\alpha)}$ to the left-hand side and then 
do the following shifts $\alpha \rightarrow \alpha-1, \beta \rightarrow \beta+1, n\rightarrow n+1$, 
it is equivalent to showing that 
\begin{align*}
&
b_{n+1}(d_n-b_n)P_{l_0}(z;\beta-1,\alpha+1)P_{n}(z;\alpha-1,\beta+1)+\tilde{\tilde{P}}(z)P_{n+1}(z) \\
&\quad
\in\text{span}\{P_{n+l_0+2}(z), \ldots, P_{n+2}(z)\}, 
\end{align*}
where $\deg\tilde{\tilde{P}}(z)=l_0+1$. 
The above relation can be implied by corollary \ref{cor_expan_k}.

Similarly, for $j_0=4$, it turns out that 
\begin{align*}
&
\pi^{(4)}(z)P^{(4,l_0,n)}(z)
\propto
\bigg[
z(1-z)P_{l_0}(z; -\alpha-1,1-\beta)P'_{n}(z) 
-(\alpha+1)P_{l_0+1}(z; -\alpha-2,1-\beta)P_n(z) 
\bigg] \\
&
=
-(n+\alpha+1)zP_{l_0}(z;-\alpha-1,1-\beta)P_{n}(z;\alpha+1,\beta-1)
+\frac{(\alpha+1)(\beta-1)}{l_0-1-\alpha}P_{l_0}(z;-\alpha-2,2-\beta)P_{n}(z) \\
&
=\bigg[
-(n+\alpha+1)zP_{l_0}(z;-\alpha-1,1-\beta)+\frac{(\alpha+1)(\beta-1)}{l_0-1-\alpha}P_{l_0}(z;-\alpha-2,2-\beta)-\tilde{P}(z)
\bigg]P_{n}(z;\alpha+1,\beta-1) \\
&\quad
-\frac{(\alpha+1)(\beta-1)}{l_0-1-\alpha}b_nP_{l_0}(z;-\alpha-2,2-\beta)P_{n-1}(z)
+\tilde{P}(z)P_{n}(z;\alpha+1,\beta-1), 
\end{align*}
where $\tilde{P}(z)$ is a polynomial of degree $l_0+1$ 
and it does not necessarily coincide with the one in the case $j_0=3$. 
Again, after multiplying the factor $\frac{l_0-1-\alpha}{(n+1+\alpha)(n+2+\alpha)}$ to the left-hand side and then 
do the shifts $\alpha \rightarrow \alpha-1, \beta \rightarrow \beta+1, n\rightarrow n+1$, 
it suffices to show that 
\begin{align*}
&
b_{n+1}(d_n-b_n)P_{l_0}(z;-\alpha-1,1-\beta)P_{n}(z;\alpha-1,\beta+1)+\tilde{\tilde{P}}(z)P_{n+1}(z) \\
&\quad
\in\text{span}\{P_{n+l_0+2}(z), \ldots, P_{n+2}(z)\}, 
\end{align*}
where $\deg\tilde{\tilde{P}}(z)=l_0+1$. 
Again, this relation can be implied by corollary \ref{cor_expan_k}. 
In conclusion, the proof of (\ref{cond_pi_P_0}) is completed. 
\end{proof}

%


%

Therefore, we have finished the proof for (\ref{expre_Coe_a_34}) in view of (\ref{MqP_1}). 

\noindent
{\textbf{Proof of (\ref{expre_Coe_a_12}) in Proposition \ref{prop_Coe_a}}: }

On the other hand, if we use the $c_{n,m}$'s defined by (\ref{c_nm_0}), we should first figure out what kind of function 
$\tilde{Q}^{(j_0,l_0)}_m(z)$ is, and then do similar discussions as above. 
\begin{lemma}
\label{mQ_span}
For $l_0\in\mathbb{Z}_{\geq 1}$, $\tilde{Q}^{(j_0,l_0)}_m(z)$ is a Laurent polynomial, and more precisely, 
\begin{align}
\label{mQ_span_12}
\tilde{Q}^{(j_0,l_0)}_m(z)\in\text{span}\{z^{m-1}, \ldots, z^{-l_0-1}\}, \quad \text{if } j_0=1,2;  \\
\label{mQ_span_34}
\tilde{Q}^{(j_0,l_0)}_m(z)\in\text{span}\{z^{m-1}, \ldots, z^{-l_0-2}\}, \quad \text{if } j_0=3,4. 
\end{align}
\end{lemma}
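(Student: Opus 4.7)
The plan is to unpack definition (\ref{mQ}) via a Leibniz-rule decomposition of $\overline{\cF^*}$ (a first-order differential operator) and then collapse one of the resulting pieces using identity (\ref{haFM}). From (\ref{ophaF}), $\overline{\cF^*}$ has the form $A(z)\partial_z + B(z)$ with $A(z) = z^2/\epsilon(z^{-1}) = z^2 Q^{(j_0)}(z^{-1}) p_{l_0}^{(j_0)}(z^{-1})$ under our choice of $\epsilon$. Writing $g(z) := \overline{\cM^*}[\overline{w(z;\alpha+1,\beta-1)}\, Q_m(z;\alpha+1,\beta-1)]$, the Leibniz rule gives
\begin{align*}
\overline{\cF^*}[q_{l_0}^{(j_0)}(z^{-1})\, g(z)] = q_{l_0}^{(j_0)}(z^{-1})\,\overline{\cF^*}[g(z)] + A(z)\,\partial_z[q_{l_0}^{(j_0)}(z^{-1})]\, g(z),
\end{align*}
so dividing by $\overline{w(z)}$ splits $\tilde{Q}_m^{(j_0,l_0)}(z)$ into a main piece and a residual piece.

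By (\ref{haFM}) and (\ref{haFM_0}), the main piece $q_{l_0}^{(j_0)}(z^{-1})\,\overline{\cF^*}[g(z)]/\overline{w(z)}$ collapses to $-(m-\theta_{l_0}^{(j_0)})(m+\beta)\, q_{l_0}^{(j_0)}(z^{-1})\, Q_m(z)$. Since $z\mid q_{l_0}^{(j_0)}(z)$ and $\deg q_{l_0}^{(j_0)} = l_0+1$, the Laurent polynomial $q_{l_0}^{(j_0)}(z^{-1})$ spans $\{z^{-1},\ldots,z^{-(l_0+1)}\}$; multiplying by $Q_m(z)$ of degree $m$ places the main piece inside $\textup{span}\{z^{m-1},\ldots,z^{-l_0-1}\}$. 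This already realizes the upper bound $z^{m-1}$ asserted in both cases of the lemma.

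For the residual piece, I would compute $g(z)/\overline{w(z)}$ explicitly as a Laurent polynomial. Using $\overline{\cM^*} = \kappa(\overline{\cG_1^*} - \overline{\cG_2^*})\overline{\tilde\phi_2(z)}$ together with (\ref{ophaG}) and the weight ratio $\overline{w(z;\alpha+1,\beta-1)}/\overline{w(z)} = -z^{-1}$ (a consequence of (\ref{w_HR})), the factor $\overline{w(z)}$ cancels and $g(z)/\overline{w(z)}$ reduces to a first-order differential expression applied to $-z^{-1} Q_m(z;\alpha+1,\beta-1)$ with explicit Laurent coefficients in $z^{-1}$. In parallel, the residual prefactor $A(z)\,\partial_z[q_{l_0}^{(j_0)}(z^{-1})] = -Q^{(j_0)}(z^{-1})\,p_{l_0}^{(j_0)}(z^{-1})\,(q_{l_0}^{(j_0)})'(z^{-1})$ is itself a Laurent polynomial whose range is case-sensitive: for $j_0=1,2$ it lies in $\textup{span}\{z^0,\ldots,z^{-2l_0-1}\}$ (both $p_{l_0}^{(j_0)}(z^{-1})$ and $(q_{l_0}^{(j_0)})'(z^{-1})$ are polynomials in $z^{-1}$), while for $j_0=3,4$ it lies in $\textup{span}\{z^{l_0-1},\ldots,z^{-l_0-2}\}$ (as $p_{l_0}^{(j_0)}(z^{-1})$ becomes a polynomial in $z$ and $Q^{(j_0)}(z^{-1})$ carries extra $z^{-1}$ factors). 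Multiplying the two Laurent ranges yields the stated bounds.

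The main obstacle is confirming that neither the upper bound $z^{m-1}$ nor the case-dependent lower bound is violated by the residual term. Controlling the upper power requires extracting a nontrivial cancellation in the differential action of $\overline{\cM^*}$ on $-z^{-1}\overline{w(z)}Q_m(z;\alpha+1,\beta-1)$, which should follow by mimicking the derivation of (\ref{haFM_0}) with $Q_m(z;\alpha+1,\beta-1)$ replaced by $z^{-1}Q_m(z;\alpha+1,\beta-1)$. The asymmetric lower bounds between $j_0=1,2$ and $j_0=3,4$ trace back to the factor $z^{l_0}$ in the seed function from (\ref{hphi_to_X1P}), reflected here through the explicit forms in (\ref{PQ_1})--(\ref{quasi_P_2}); a case-by-case verification completes the proof.
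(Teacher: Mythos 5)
Your first step coincides with the paper's: the Leibniz-rule split of $\overline{\cF^{\ast}}$ gives $\overline{\cF^{\ast}}q^{(j_0)}_{l_0}(z^{-1})\overline{\cM^{\ast}} = q^{(j_0)}_{l_0}(z^{-1})\overline{\cF^{\ast}}\,\overline{\cM^{\ast}} - \epsilon(z^{-1})^{-1}(q^{(j_0)}_{l_0})'(z^{-1})\overline{\cM^{\ast}}$, and the main piece collapses via (\ref{haFM})--(\ref{haFM_0}) to $(\theta^{(j_0)}_{l_0}-m)(m+\beta)\,q^{(j_0)}_{l_0}(z^{-1})Q_m(z)$, landing in $\mathrm{span}\{z^{m-1},\ldots,z^{-l_0-1}\}$ exactly as you say. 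The gap is in your treatment of the residual piece, which is where the lemma is actually proved. First, your intermediate claim that $g(z)/\overline{w(z)}$ ``reduces to a first-order differential expression \ldots with explicit Laurent coefficients in $z^{-1}$'' is not correct as stated: $\overline{\cM^{\ast}}$ carries the factors $\epsilon(z^{-1})=1/(Q^{(j_0)}(z^{-1})p^{(j_0)}_{l_0}(z^{-1}))$ and $\phi'(z^{-1})/\phi(z^{-1})$, so $g(z)/\overline{w(z)}$ has $(p^{(j_0)}_{l_0}(z^{-1}))^{2}$ in its denominator and is a rational function, not a Laurent polynomial. That singularity only disappears after multiplication by the prefactor $\epsilon(z^{-1})^{-1}(q^{(j_0)}_{l_0})'(z^{-1}) = Q^{(j_0)}(z^{-1})(p^{(j_0)}_{l_0}(z^{-1}))^{2}$ (for $j_0=1,2$; with an extra $z^{-l_0}$ for $j_0=3,4$), so ``multiplying the two Laurent ranges'' is not an available move.

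Second, and more importantly, you flag the needed cancellation controlling the top power $z^{m-1}$ but propose to obtain it by ``mimicking the derivation of (\ref{haFM_0})'' with $Q_m$ replaced by $z^{-1}Q_m$. That identity concerns the composition $\overline{\cF^{\ast}}\,\overline{\cM^{\ast}}$ and has no analogue for $\overline{\cM^{\ast}}$ alone. The paper's key step, which your plan is missing, is the identification
\begin{align*}
\overline{\cM^{\ast}}\bigl[w(z^{-1};\alpha+1,\beta-1)Q_m(z;\alpha+1,\beta-1)\bigr]
=\frac{w(z^{-1};\alpha+1,\beta-1)}{w_1(z)}\,\widehat{L}_2^{\ast}\widehat{\psi^{\ast}}(z),
\end{align*}
together with the explicit formula (\ref{hL2psi}) expressing $\widehat{L}_2^{\ast}\widehat{\psi^{\ast}}(z)$ in terms of the exceptional biorthogonal partner $Q^{(j_0,l_0,m)}(z)$. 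After the cancellations described above, the residual term becomes $C^{(j_0,l_0)}\,z^{-l_0-1}(z-1)\,Q^{(j_0,l_0,m)}(z)/Q^{(j_0)}(z)$, and both the upper bound $z^{m-1}$ and the $j_0$-dependent lower bounds then follow from $\deg Q^{(j_0,l_0,m)} = m+l_0-\delta_{1,j}+\delta_{4,j}$ (equation (\ref{deg_XQ0})) and the explicit forms of $Q^{(j_0)}(z)$ in (\ref{PQ_2}). Without this identification the degree count of the residual term does not close, so as written your argument does not establish the lemma.
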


\begin{proof}
By definition, we have 
\begin{align*}
&\overline{\cF^{\ast}}=z\partial_z\dfrac{z}{\epsilon(z^{-1})} I-\dfrac{\phi'(z^{-1})}{\epsilon(z^{-1})\phi(z^{-1})}I, \\
&\overline{\cM^{\ast}}=z\partial_z zA_1(z^{-1})\epsilon(z^{-1})+\epsilon(z^{-1})\left(B_1(z^{-1})-\kappa B_2(z^{-1})
+A_1(z^{-1})\left(\dfrac{\epsilon'(z^{-1})}{\epsilon(z^{-1})}+\dfrac{\phi'(z^{-1})}{\phi(z^{-1})}\right)\right), 
\end{align*}
where $\epsilon(z)$ is defined by (\ref{eps}), $\phi(z)=\phi^{(j_0)}_{l_0}(z)$ (see lemma \ref{lem_quasi}), 
$A_1(z), B_1(z), B_2(z)$ are defined by (\ref{opL_ABC}). 
From the expression of $\overline{\cF^{\ast}}$, one can immediately obtain the relation
\[
\overline{\cF^{\ast}}q^{(j_0)}_{l_0}(z^{-1})=q^{(j_0)}_{l_0}(z^{-1})\overline{\cF^{\ast}}
-\frac{(q^{(j_0)}_{l_0})'(z^{-1})}{\epsilon(z^{-1})}I, 
\]
thus we have 
\[
\overline{\cF^{\ast}}q^{(j_0)}_{l_0}(z^{-1})\overline{\cM^{\ast}}
=q^{(j_0)}_{l_0}(z^{-1})\overline{\cF^{\ast}}\overline{\cM^{\ast}}
-\frac{(q^{(j_0)}_{l_0})'(z^{-1})}{\epsilon(z^{-1})}\overline{\cM^{\ast}}. 
\]
Moreover, for $\psi^{\ast}(z)=\phi^{(1,m)\ast}(z)$, it holds that (see (\ref{hL2_hpsi}))
\[
\overline{\cM^{\ast}}[w(z^{-1};\alpha+1,\beta-1)Q_m(z;\alpha+1,\beta-1)]=
\frac{w(z^{-1};\alpha+1,\beta-1)}{w_1(z)}
\widehat{L}_2^{\ast}\widehat{\psi^{\ast}}(z),
\]
where the relation $|w_1(z)|=|w(z^{-1};\alpha+1,\beta-1)|$ was used. 
Then, from (\ref{haFM_0}) we obtain 
\begin{align*}
&
\overline{\cF^{\ast}}[q^{(j_0)}_{l_0}(z^{-1})
\overline{\cM^{\ast}}[\overline{w(z; \alpha+1,\beta-1)}Q_m(z; \alpha+1,\beta-1)]] \\
&\quad
=(\theta^{(j_0)}_{l_0}-m)(m+\beta)q^{(j_0)}_{l_0}(z^{-1})w(z^{-1})Q_m(z)
-\frac{(q^{(j_0)}_{l_0})'(z^{-1})}{\epsilon(z^{-1})}\frac{w(z^{-1};\alpha+1,\beta-1)}{w_1(z)}\widehat{L}_2^{\ast}\widehat{\psi^{\ast}}(z). 
\end{align*}
By using (\ref{hL2psi}) and the following relations
\begin{align}
\label{mQ_exp}
\frac{p^{(3)}_{l_0}(z^{-1})}{\tilde{p}^{(3)}_{l_0}(z)}=\frac{(\beta)_{l_0}}{(\alpha+1)_{l_0}}z^{l_0}, \hspace{2mm}
\frac{p^{(4)}_{l_0}(z^{-1})}{\tilde{p}^{(4)}_{l_0}(z)}=\frac{(-\alpha)_{l_0}}{(-\beta+1)_{l_0}}z^{l_0}, \hspace{2mm}
\frac{w(z^{-1};\alpha+1,\beta-1)}{w(z^{-1})}=-\frac{1}{z}, 
\end{align}
the expression of $\tilde{Q}^{(j_0,l_0)}_m(z)$ can be derived as: 
\begin{align}
\label{mQ_exp}
\tilde{Q}^{(j_0,l_0)}_m(z)
=\theta_{m}^{(j_0,l_0)}q^{(j_0)}_{l_0}(z^{-1})Q_m(z) 
+C^{(j_0,l_0)}\frac{z^{-l_0-1}(z-1)}{Q^{(j_0)}(z)}Q^{(j_0,l_0,m)}(z), 
\end{align}
where $\theta_{m}^{(j_0,l_0)}=(\theta^{(j_0)}_{l_0}-m)(m+\beta)$, and 
\begin{align}
\label{C_j0_l0}
C^{(j_0,l_0)}=
\begin{cases}
\dfrac{(\beta)_{l_0}}{(1+\alpha)_{l_0}}, & j_0=1, 3, \\
\dfrac{(-\alpha)_{l_0}}{(-\beta+1)_{l_0}}, & j_0=2, 4. \\
\end{cases}
\end{align}
Since $z\mid q^{(j_0)}_{l_0}(z)$, then one observes that 
$q^{(j_0)}_{l_0}(z^{-1})Q_m(z)\in\text{span}\{z^{m-1}, \ldots, z^{-l_0-1}\}$. 
Similarly,  from (\ref{PQ_2}) and (\ref{deg_XQ0}) one can see that 
\begin{align*}
\frac{z^{-l_0-1}(z-1)}{Q^{(j_0)}(z)}Q^{(j_0,l_0,m)}(z)
\in
\begin{cases}
\text{span}\{z^{m-1}, \ldots, z^{-l_0-1}\}, & j_0=1,2, \\
\text{span}\{z^{m-1}, \ldots, z^{-l_0-2}\}, & j_0=3,4, 
\end{cases}
\end{align*}
which completes the proof of (\ref{mQ_span_12}) and (\ref{mQ_span_34}).

\end{proof}

\begin{remark}
In fact, by observing the exceptional norming constants (\ref{Xh1}) 
and the eigenvalues of quasi-Laurent-polynomial eigenfunctions in lemma \ref{lem_quasi}, we find that 
\begin{align}
\label{theta_m}
\theta_{m}^{(j_0,l_0)}=\frac{h_m^{(j_0,l_0)}}{h_m}, \quad j_0\in\{1,2,3,4\}.
\end{align}
\end{remark}

Obviously, $z^{l_0+1}\tilde{Q}^{(j_0,l_0)}_m(z)$ is a polynomial of degree $m+l_0$ for $j_0=1,2$, 
and $z^{l_0+2}\tilde{Q}^{(j_0,l_0)}_m(z)$ is a polynomial of degree $m+l_0+1$ for $j_0=3,4$. 
Then $c_{n,m}$ can be rewritten into 
\begin{align}
\label{c_nm_new}
c_{n,m}
=
\begin{cases}
\frac{1}{h_m^{\alpha+1,\beta-1}}
\langle w(z)P_n(z)z^{l_0+1}, 
\tilde{Q}^{(j_0,l_0)}_m(z)z^{l_0+1}
\rangle, & j_0=1,2, \\
\frac{1}{h_m^{\alpha+1,\beta-1}}
\langle w(z)P_n(z)z^{l_0+2}, 
\tilde{Q}^{(j_0,l_0)}_m(z)z^{l_0+2}
\rangle, & j_0=3,4.
\end{cases}
\end{align}
For $j_0=1$ or $2$, it follows from lemma \ref{lem_zp_expan} that there exists constants 
$a_{n,l_0}^{(0)}(=1), a_{n,l_0}^{(1)}, \ldots, a_{n,l_0}^{(l_0+1)}$, such that 
\[
z^{l_0+1}\sum^{l_0+1}_{l=0}a_{n,l_0}^{(l)}P_{n-l}(z)
\in\textup{span}\{P_{n+l_0+1}(z), \cdots, P_{n}(z)\},
\]
where $a_{n,l_0}^{(l)}=C_{n,l_0+1}^{(l)}$, $l=1,\ldots,l_0+1$. 
Then for $0\leq m\leq n-l_0-1$, it holds that 
\[
\langle w(z)z^{l_0+1}\sum^{l_0+1}_{l=0}a_{n,l_0}^{(l)}P_{n-l}(z), 
z^{l_0+1}\tilde{Q}^{(j_0,l_0)}_m(z) \rangle =0, 
\]
which finished the proof of (\ref{expre_Coe_a_12}). 
However, in the case $j_0=3$ or $4$, 
there do not exist constants $a_{n,l_0}^{(0)}, \ldots, a_{n,l_0}^{(l_0+1)}$ such that 
\[
z^{l_0+2}\sum^{l_0+1}_{l=0}a_{n,l_0}^{(l)}P_{n-l}(z)\in
\text{span}\{P_{n+l_0+2}(z), \cdots, P_{n+1}(z)\}. 
\]

\section{Recurrence relations of X-HR polynomials: concrete examples}
In this section, 
examples of the recurrence relation 
\[
q^{(j_0)}_{l_0}(z)\sum^{l_0+1}_{l=0}a_l^{(j_0,l_0,n)}P^{(j_0,l_0,n-l)}(z)
=\sum^{n+l_0+1}_{j=n-l_0}b_j^{(j_0,l_0,n)}P^{(j_0,l_0,j)}(z), \quad n\geq 2l_0+1,
\]
will be provided, where the polynomials $q^{(j_0)}_{l_0}(z)$ are defined by (\ref{q_jl}). 


As we have already mentioned, for $n\geq 2l_0+1$ and $a^{(j_0,l_0,n)}_0=1$, 
the coefficients $a^{(j_0,l_0,n)}_l$, $l=1,\ldots,l_0+1$ can be explicitly given by 
(\ref{expre_Coe_a_12}) and (\ref{expre_Coe_a_34}).

For example, let $l_0=1$ and $a^{(j_0,1,n)}_0=1$, 
then the coefficients $a^{(j_0,1,n)}_l$, $l=1,2$ can be obtained by using the following data: 
\begin{align*}
&
C^{(1)}_{n,2}=b_n+b_n^{\alpha+1,\beta-1}=-\dfrac{2n(n+\alpha+\beta)}{(n+\alpha)(n+2+\alpha)}, \\
&
C^{(2)}_{n,2}=b_n^{\alpha+1,\beta-1}b_{n-1}
=\dfrac{n(n-1)(n-1+\alpha+\beta)(n+\alpha+\beta)}{(n-1+\alpha)(n+\alpha)(n+1+\alpha)(n+2+\alpha)}, 
\end{align*}
\begin{align*}
\Xi_n^{(j_0,1)}=-(n-\theta_{1}^{(j_0)})(n+\alpha+1)
=
\begin{cases}
n-1, & j_0=1, \\
n-1+\alpha+\beta, & j_0=2, \\
n+2+\alpha+\beta, & j_0=3, \\
n+2, & j_0=4.
\end{cases}
\end{align*}

For $j_0=1$, the recurrence relation can be given explicitly as 
\begin{align*}
&
q^{(1)}_{1}(z)\bigg(P^{(1,1,n)}(z)-\dfrac{2n(n+\alpha+\beta)}{(n+\alpha)(n+2+\alpha)}P^{(1,1,n-1)}(z) \\
&\hspace{1.6cm}
+\dfrac{n(n-1)(n-1+\alpha+\beta)(n+\alpha+\beta)}{(n-1+\alpha)(n+\alpha)(n+1+\alpha)(n+2+\alpha)}P^{(1,1,n-2)}(z)
\bigg) \\
&
=\dfrac{n-1}{2(n+1)}P^{(1,1,n+2)}(z)-\dfrac{(n-2)(1+\alpha)-n\beta}{(1+\alpha)(n+2+\alpha)}P^{(1,1,n+1)}(z) \\
&\quad
-\dfrac{(n-3)n(1+\alpha)+4n^2\beta-\beta(4n+1+\alpha)(2n-1+\beta)}{2(1+\alpha)(n+1+\alpha)(n+2+\alpha)}
P^{(1,1,n)}(z) \\
&\quad
+\dfrac{n\beta(n-2+\beta)(n+\alpha+\beta)}{(1+\alpha)(n+\alpha)(n+1+\alpha)(n+2+\alpha)}P^{(1,1,n-1)}(z), 
\end{align*}
where
\[
q^{(1)}_{1}(z)=\dfrac{z^2}{2}+\frac{\beta z}{1+\alpha}. 
\]


In what follows, we provide more concrete examples for reference. 

\noindent
{\textbf{Example 1. $j_0=1, l_0=1, n=5$}}:
\begin{align*}
&
q^{(1)}_{1}(z)\left(P^{(1,1,5)}(z)-\dfrac{10(5+\alpha+\beta)}{(5+\alpha)(7+\alpha)}P^{(1,1,4)}(z)
+\dfrac{20(4+\alpha+\beta)(5+\alpha+\beta)}{(4+\alpha)(5+\alpha)(6+\alpha)(7+\alpha)}P^{(1,1,3)}(z)
\right) \\
&
=\dfrac{1}{3}P^{(1,1,7)}(z)-\dfrac{3+3\alpha-5\beta}{(1+\alpha)(7+\alpha)}P^{(1,1,6)}(z) \\
&\quad
-\dfrac{-10+89\beta+21\beta^2+\alpha(-10+9\beta+\beta^2)}{2(1+\alpha)(6+\alpha)(7+\alpha)}P^{(1,1,5)}(z) \\
&\quad
+\dfrac{5\beta(3+\beta)(5+\alpha+\beta)}{(1+\alpha)(5+\alpha)(6+\alpha)(7+\alpha)}P^{(1,1,4)}(z), 
\end{align*}
where
\[
q^{(1)}_{1}(z)=\dfrac{z^2}{2}+\frac{\beta z}{1+\alpha}. 
\]

\noindent
{\textbf{Example 2. $j_0=2, l_0=1, n=5$}}:
\begin{align*}
&
q^{(2)}_{1}(z)\left(P^{(2,1,5)}(z)-\dfrac{10(5+\alpha+\beta)}{(5+\alpha)(7+\alpha)}P^{(2,1,4)}(z)
+\dfrac{20(4+\alpha+\beta)(5+\alpha+\beta)}{(4+\alpha)(5+\alpha)(6+\alpha)(7+\alpha)}P^{(2,1,3)}(z)
\right) \\
&
=\dfrac{4+\alpha+\beta}{2(6+\alpha+\beta)}P^{(1,1,7)}(z)
+\dfrac{3+6\alpha+\alpha^2-2\beta-\beta^2}{(7+\alpha)(\beta-1)}P^{(2,1,6)}(z) \\
&\quad
+\dfrac{-10+3\beta+6\beta^2+\beta^3-24\alpha(4+\beta)-2\alpha^2(9+\beta)}
{2(6+\alpha)(7+\alpha)(\beta-1)}P^{(2,1,5)}(z) \\
&\quad
+\dfrac{5\alpha(3+\beta)(5+\alpha+\beta)}{(5+\alpha)(6+\alpha)(7+\alpha)(\beta-1)}P^{(2,1,4)}(z), 
\end{align*}
where
\[
q^{(2)}_{1}(z)=\dfrac{z^2}{2}+\frac{\alpha z}{-1+\beta}. 
\]

\noindent
{\textbf{Example 3. $j_0=3, l_0=1, n=5$}}:
\begin{align*}
&
q^{(3)}_{1}(z)\left(P^{(3,1,5)}(z)
-\dfrac{10(5+\alpha+\beta)(7+\alpha+\beta)}{(5+\alpha)(8+\alpha)(6+\alpha+\beta)}P^{(3,1,4)}(z)
+\dfrac{20(4+\alpha+\beta)(7+\alpha+\beta)}{(4+\alpha)(5+\alpha)(7+\alpha)(8+\alpha)}P^{(3,1,3)}(z)
\right) \\
&
=\dfrac{(6+\alpha)\beta}{2(1+\alpha)(8+\alpha)}P^{(3,1,7)}(z)
+\dfrac{(6+\alpha)(7+\alpha+\beta)(7+8\alpha+\alpha^2-4\beta-\beta^2)}
{(1+\alpha)(7+\alpha)(8+\alpha)(6+\alpha+\beta)}P^{(3,1,6)}(z) \\
&\quad
+\dfrac{140+8\beta-9\beta^2-\beta^3+4\alpha(40+7\beta)+2\alpha^2(10+\beta)}
{2(1+\alpha)(7+\alpha)(8+\alpha)}P^{(3,1,5)}(z) \\
&\quad
+\dfrac{5(4+\beta)(5+\alpha+\beta)(7+\alpha+\beta)}{(5+\alpha)(7+\alpha)(8+\alpha)(6+\alpha+\beta)}P^{(3,1,4)}(z), 
\end{align*}
where
\[
q^{(3)}_{1}(z)=\dfrac{\beta z^2}{2(1+\alpha)}+z. 
\]

\noindent
{\textbf{Example 4. $j_0=4, l_0=1, n=5$}}:
\begin{align*}
&
q^{(4)}_{1}(z)\left(P^{(4,1,5)}(z)-\dfrac{35(5+\alpha+\beta)}{3(5+\alpha)(8+\alpha)}P^{(4,1,4)}(z)
+\dfrac{28(4+\alpha+\beta)(5+\alpha+\beta)}{(4+\alpha)(5+\alpha)(7+\alpha)(8+\alpha)}P^{(4,1,3)}(z)
\right) \\
&
=\dfrac{\alpha(6+\alpha)}{2(8+\alpha)(\beta-1)}P^{(4,1,7)}(z)
-\dfrac{7(6+\alpha)(7+5\alpha-7\beta)}{6(7+\alpha)(8+\alpha)(\beta-1)}P^{(4,1,6)}(z) \\
&\quad
-\dfrac{-140+112\beta+28\beta^2+\alpha(-40+11\beta+\beta^2)}{2(7+\alpha)(8+\alpha)(\beta-1)}P^{(4,1,5)}(z) \\
&\quad
+\dfrac{35(4+\beta)(5+\alpha+\beta)}{6(5+\alpha)(7+\alpha)(8+\alpha)}P^{(4,1,4)}(z), 
\end{align*}
where
\[
q^{(4)}_{1}(z)=\dfrac{\alpha z^2}{2(-1+\beta)}+z. 
\]

\section{Concluding remarks}

In conclusion, the topic of exceptional-type extensions of COP has received 
significant attention from researchers in recent years. 
XOP generalizes COP by relaxing constraints on their degree sequence, 
and they have demonstrated their potential in various applications, 
particularly in deriving new exactly solvable potentials. 
Darboux transformations have played a crucial role in the construction of XOP, 
while other methods, such as the concept of dual families of polynomials, have also been employed. 
Properties such as recurrence relations, zeros, and spectral analysis have been extensively studied 
in the context of XOP. 
Examples of the exceptional extensions of Laurent biorthogonal polynomials, 
such as exceptional HR polynomials, have also been introduced recently by the authors, 
and their recurrence relations have been investigated in this paper. 
Unlike XOP, XLBP satisfies longer recurrence relations. 
By using the expansions of HR polynomials with twisted parameters and 
properties of the so-called forward operator and backward operators, 
we obtained the recurrence relations satisfied by the exceptional HR polynomials.

\section*{Acknowledgements}
This work was partially supported by JSPS KAKENHI Grant Numbers JP19H0179.



\end{document}